\documentclass[oneside,11pt]{amsart}

\usepackage[T1]{fontenc}

\usepackage[small]{caption}


\usepackage{amssymb}
\usepackage{amsxtra}
\usepackage{hyperref}

\usepackage[arrow, tips, matrix]{xy}
\SelectTips{cm}{12}
\CompileMatrices

\newtheorem{thm}{Theorem}[section]

\newtheorem*{RelativeCannon}{Relative Cannon Conjecture}
\newtheorem{prop}[thm]{Proposition}
\newtheorem{lem}[thm]{Lemma}
\newtheorem{cor}[thm]{Corollary}
\newtheorem{conj}[thm]{Conjecture}

\theoremstyle{definition}
\newtheorem{defn}[thm]{Definition}

\renewcommand{\bar}[1]{\overline{#1}}
\newcommand{\boundary}{\partial}

\newcommand{\set}[2]{\{\,{#1} \mid {#2} \,\}}
\newcommand{\bigset}[2]{ \bigl\{ \, {#1} \bigm| {#2} \, \bigr\} }
\renewcommand{\emptyset}{\varnothing}

\newcommand{\field}[1]{\mathbb{#1}}
\newcommand{\Z}{\field{Z}}
\newcommand{\R}{\field{R}}

\newcommand{\E}{\field{E}}

\newcommand{\C}{\field{C}}
\newcommand{\Hyp}{\field{H}}

\renewcommand{\P}{\mathcal{P}}

\DeclareMathOperator{\Isom}{Isom}

\DeclareMathOperator{\Stab}{Stab}

\DeclareMathOperator{\Homeo}{Homeo}
\DeclareMathOperator{\PSL}{PSL}
\DeclareMathOperator{\Ends}{Ends}


\usepackage{combelow}
\newcommand{\Drutu}{Dru{\cb{t}}u}

\newcommand{\Caratheodory}{Cara\-th\'{e}o\-dory}

\newcommand{\Haissinsky}{Ha\"{i}ssin\-sky}
\newcommand{\Sierpinski}{Sier\-pi{\'n}\-ski}
\newcommand{\Kerekjarto}{Ke\-r\'{e}k\-j\'{a}r\-t\'{o}}

\hyphenation{geo-desic geo-des-ics quasi-convex quasi-convex-ity
             quasi-geo-desic quasi-geo-des-ics
             state-ment prop-o-si-tion equi-vari-ant equi-vari-antly
             pa-ram-e-trized Rie-mann-ian
             quasi-com-po-nent quasi-com-po-nents}


\usepackage{color}
\usepackage{pinlabel}
\usepackage{todonotes}


\usepackage{ifthen}

\newcommand{\showcomments}{yes}

\newsavebox{\commentbox}
%
{\ifthenelse{\equal{\showcomments}{yes}}%
{\footnotemark
    \begin{lrbox}{\commentbox}
    \begin{minipage}[t]{1.25in}\raggedright\sffamily\upshape\tiny
    \footnotemark[\arabic{footnote}]}
{\begin{lrbox}{\commentbox}}}%
{\ifthenelse{\equal{\showcomments}{yes}}%
{\end{minipage}\end{lrbox}\marginpar{\usebox{\commentbox}}}
{\end{lrbox}}}

\title{Relatively hyperbolic groups with planar boundaries}

\author[G.C.~Hruska]{G.~Christopher Hruska}
\address{Department of Mathematical Sciences\\
University of Wisconsin--Milwaukee\\
PO Box 413\\
Milwaukee, WI 53211\\
USA}
\email{chruska@uwm.edu}

\author[G.S.~Walsh]{Genevieve S.~Walsh }
\address{Department of Mathematics\\
Tufts University\\
Medford, MA 02155\\
USA}
\email{genevieve.walsh@tufts.edu}

\begin{document}

\begin{abstract}
In this article, we prove a version of Martin and Skora's conjecture that convergence groups on the $2$--sphere are covered by Kleinian groups. Given a relatively hyperbolic group pair $(G,\mathcal{P})$ with planar boundary and no \Sierpinski\ carpet or cut points in its limit set, and with $G$ one ended and virtually having no $2$--torsion, we show that $G$ is virtually Kleinian. We also give applications to various versions of the Cannon conjecture and to convergence groups acting on $S^2$. 
\end{abstract}

\maketitle

\section{Introduction} 
\label{sec:Introduction}

Gehring and Martin introduced the notion of a convergence group in \cite{GehringMartin87} in order to describe the topological dynamics of Kleinian groups and, more generally, discrete quasiconformal groups of homeomorphisms of $S^n$.
Later, Freden and Tukia independently proved that any word hyperbolic or relatively hyperbolic group acts as a convergence group on its boundary \cite{Fredenconv,Tukia94}. 
In general, it can be difficult to determine which groups have a particular topological space as their boundary. Very few cases are known.  Famously, a relatively hyperbolic group with boundary $S^0$ must be virtually cyclic \cite{Freudenthal_Ends,Hopf_Ends}, and a relatively hyperbolic group with boundary $S^1$ is always virtually Fuchsian \cite{Tukia_Fuchsian,Gabai92,CassonJungreis94}.  
Whether all hyperbolic groups with boundary $S^2$ are virtually Kleinian is a well-known open problem, known as the Cannon conjecture \cite{Cannon91}.

In this article, we examine relatively hyperbolic groups with planar boundary, establishing that they are virtually Kleinian in many cases.
We explicitly assume, in the following result, that the boundary does not contain a \Sierpinski\ carpet, which rules out the case of groups with $S^2$ boundary.

\begin{thm}
\label{thm:main} 
Let $(G, \mathcal{P})$ be a relatively hyperbolic group pair with planar boundary $\partial(G, \mathcal{P})$.  Assume that $\partial(G, \mathcal{P})$ is connected with no cut points and does not contain a \Sierpinski\ carpet, and that $G$ is one ended and virtually has no $2$--torsion.  Then $G$ is virtually Kleinian. 
\end{thm}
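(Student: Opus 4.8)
The plan is to reduce the statement to the topology of the boundary, extract virtually Fuchsian pieces from the complementary regions, and reassemble them geometrically. Write $M := \partial(G,\mathcal{P})$ and fix a planar embedding $M \hookrightarrow S^2$. Since $M$ is connected with no cut points, a theorem of Bowditch gives that $M$ is locally connected, hence a Peano continuum. Every subset of $S^2$ with nonempty interior contains a subset homeomorphic to the \Sierpinski\ carpet, so the hypothesis that $M$ contains no carpet forces $M$ to have empty interior; a nondegenerate connected planar set with empty interior is $1$--dimensional. Thus $M$ is a $1$--dimensional planar Peano continuum without cut points, and in particular $M \not\cong S^2$, as anticipated after the Main Theorem.

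Next I would study the complementary domains of $M$ in $S^2$. Because $M$ is a Peano continuum with no cut points, classical plane topology --- via the \Caratheodory\ extension theorem applied to the Riemann map of each domain --- shows that every complementary domain is a Jordan domain, bounded by a simple closed curve that I call a \emph{peripheral circle}. By Whyburn's characterization of the \Sierpinski\ carpet, if the complementary domains formed a null sequence of Jordan domains with pairwise disjoint closures then $M$ would be a carpet; this is excluded, so the peripheral circles must overlap. At the same time no two peripheral circles may meet in a single point, since such a point would be a global cut point of $M$. The setwise stabilizer in $G$ of a peripheral circle acts on it as a convergence group on $S^1$, and by the resolution of the convergence group conjecture on the circle \cite{Tukia_Fuchsian,Gabai92,CassonJungreis94} each such stabilizer is virtually Fuchsian. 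Together with the peripheral subgroups in $\mathcal{P}$, these virtually Fuchsian stabilizers are the pieces from which $G$ will be rebuilt.

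I would then organize these pieces using Bowditch's canonical decomposition theory for relatively hyperbolic groups. The absence of global cut points in $M$ means $(G,\mathcal{P})$ has no nontrivial peripheral splitting, while the local cut points arising from overlapping peripheral circles produce a canonical splitting of $G$ over two--ended subgroups whose vertex groups are the members of $\mathcal{P}$, the maximal hanging Fuchsian (peripheral--circle) stabilizers above, and possibly rigid vertex groups. Using the hypothesis that $G$ virtually has no $2$--torsion, I pass to a finite--index subgroup $G_0$ in which every Fuchsian piece is a genuine orientable surface group and every edge group is an orientation--preserving cyclic or peripheral subgroup, eliminating orbifold obstructions to the geometric step. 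Finally I would realize $G_0$ in $\PSL(2,\C)$: each virtually Fuchsian piece is placed as a Fuchsian group stabilizing a round circle bounding one of the lifted peripheral disks, and a Klein--Maskit combination along the Bass--Serre tree of the decomposition --- with the subgroups in $\mathcal{P}$ realized as parabolic cusp groups so that the result is geometrically finite --- assembles a discrete, faithful representation $G_0 \hookrightarrow \PSL(2,\C)$ whose limit set is equivariantly homeomorphic to $M$. Hence $G_0$, and therefore $G$, is virtually Kleinian.

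The main obstacle is the geometric realization, and it has two faces. Topologically, one must know that the convergence action of $G$ on $S^2$ with $1$--dimensional limit set $M$ is conjugate to an action by Möbius transformations; this is a planar, relative analogue of the Cannon conjecture, and while the full $S^2$ case is out of reach, it is precisely the $1$--dimensionality of $M$ that lets me replace it by the already-known circle theorem applied piece by piece. Combinatorially, one must handle any rigid vertex groups not covered by a peripheral circle and verify that the Klein--Maskit combination over the (possibly infinite) tree converges to a discrete, geometrically finite group; here one--endedness prevents free splittings and keeps the tree of pieces suitably connected, and the $2$--torsion hypothesis guarantees the local models glue without orbifold singularities. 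I expect the crux to be matching these two descriptions --- proving that the topological conjugacy and the explicit geometric combination realize the same action on $M$ --- so that the resulting representation is simultaneously discrete, faithful, geometrically finite, and boundary--compatible.
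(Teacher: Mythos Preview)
Your outline captures the topological setup correctly (the boundary is a one-dimensional planar Peano continuum, complementary regions are Jordan discs, and the JSJ over two-ended subgroups organizes the pieces), but there is a genuine gap at the point you yourself flag as the ``crux.''

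First, you implicitly treat $G$ as a convergence group on $S^2$ when you speak of peripheral-circle stabilizers and, in the last paragraph, of ``the convergence action of $G$ on $S^2$.'' The hypotheses of Theorem~\ref{thm:main} give only an action of $G$ on $M$; the planar embedding need not be $G$--equivariant, and indeed there are examples where it cannot be made so. The family of peripheral circles therefore has no reason to be permuted by $G$. The paper addresses this by invoking \cite[Thm.~8.8]{HruskaWalsh}, which shows that each \emph{rigid} vertex group of the JSJ, with its induced peripheral structure, does extend to a convergence action on $S^2$; this is a substantial external ingredient, not something that falls out of plane topology.

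Second, and more seriously, you give no argument that the rigid vertex groups are virtually Kleinian. Your Klein--Maskit step only places ``each virtually Fuchsian piece'' on a round circle; nothing is said about realizing a rigid piece, and you cannot Maskit-combine pieces that are not already Kleinian. This is the entire content of Theorem~\ref{thm:generalcase} in the paper: once a rigid piece acts on $S^2$, one runs a finite elementary hierarchy (finiteness due to Louder--Touikan, using the no-$2$-torsion hypothesis), proves that terminal groups are finite, parabolic, or Fuchsian (this is where ``no \Sierpinski\ carpet'' is really used---to rule out carpet terminal groups, not merely to force overlapping peripheral circles), and then climbs back up via three separate combination theorems. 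None of this machinery appears in your sketch.

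Finally, the passage to a finite-index subgroup ``eliminating orbifold obstructions'' is not free: it requires residual finiteness of $G$, which the paper obtains from Wise's virtually compact special theorem together with the separability result of Huang--Wise (Theorems~\ref{thm:VCS} and~\ref{thm:HW_Stature}). Without this, you cannot arrange that all vertex and edge groups simultaneously become torsion free in a single finite-index subgroup of $G$.
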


Although the boundary in Theorem~\ref{thm:main} is assumed to be planar, we do not assume that the action on the boundary extends to an action on $S^2$.
There exist groups whose boundaries embed in $S^2$ but for which the action does not extend to $S^2$.  In these cases, one must pass to a finite index subgroup to obtain a Kleinian group \cite{KapovichKleiner00,HruskaStarkTran_DontAct}. Additionally, it is easy to construct examples with cut points where the conclusion of this theorem does not hold, so the cut point hypothesis is necessary (see \cite[\S 4]{HruskaWalsh}). 

Even though $G$ is shown to be virtually Kleinian in Theorem~\ref{thm:main}, the parabolic subgroups of the Kleinian representation may be different from the given ones. For instance, there exist geometrically finite convergence groups on $S^2$ whose peripheral subgroups are closed hyperbolic surface groups. 

\Haissinsky \ \cite{Haissinsky_Invent} proves the special case of Theorem~\ref{thm:main} in which $G$ is hyperbolic and $\mathcal{P}$ is empty.   As in \cite{Haissinsky_Invent}, our proof depends on a hierarchical finiteness theorem of Louder--Touikan \cite{LouderTouikan17}.  In our case, the elementary hierarchy in question involves splittings over finite, parabolic, or loxodromic subgroups, as described in Section~\ref{sec:Combination}.  In \cite{HPWpreprint}, relatively hyperbolic group pairs with Schottky set boundary are studied and many are shown to be covered by Kleinian groups and furthermore classified. 

A conjecture of Martin--Skora states that every convergence group of homeomorphisms of $S^2$ is covered by a discrete subgroup of $\Isom(\Hyp^3)$ \cite[Conj.~6.2]{MartinSkora89}.  A covering of convergence groups is a certain type of equivariant quotient map; see Sections \ref{sec:Convergence} and~\ref{sec:Planar} for definitions and discussion of convergence groups and coverings.  
The following relative version of the Cannon conjecture is a special case of Martin--Skora's covering conjecture.  Note that this conjecture is slightly different than the conjecture studied in \cite{GrovesManningSisto19}. In particular, since there are no restrctions on the peripheral groups, the set of peripheral groups could be empty (and thus this version contains the Cannon conjecture). The peripheral subgroups could also be hyperbolic surface groups and understanding this case requires our work in section \ref{sec:unpinch}. 

\begin{RelativeCannon}
Let $(G,\mathcal{P})$ be a relatively hyperbolic group pair whose Bowditch boundary is homeomorphic to $S^2$. Then $G$ acts properly, isometrically, and geometrically finitely on $\Hyp^3$.
\end{RelativeCannon}

In this paper, we prove a different version of Martin--Skora's covering conjecture: 

\begin{thm}
\label{thm:generalcase}
Suppose $(G,\P)$ is relatively hyperbolic and $G$ has a finite index subgroup containing no element of order two.
Assume the boundary $M=\boundary(G,\P)$ is planar and the action of $G$ on $M$ extends to a faithful convergence group action on $S^2$.

If $M$ does not contain an embedded \Sierpinski\ carpet \textup{(}or if the relative Cannon conjecture is true\textup{)}
then the action of $G$ on $S^2$ is covered by a Kleinian action on $\widehat{\C}$.
If, furthermore, each member of $\P$ is virtually abelian, then the action of $G$ is topologically conjugate to a Kleinian action on $\widehat{\C}$. 
\end{thm}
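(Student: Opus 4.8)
The plan is to reduce the statement to Theorem~\ref{thm:main} via the hierarchy of Section~\ref{sec:Combination}, and then to promote the resulting virtual Kleinian structure to a covering of the entire convergence action, using faithfulness of the action on $S^2$ to glue the pieces. First I would record the dynamical picture. Since the $G$--action on $M$ extends to a faithful convergence action on $S^2$, the embedded copy of $M$ is the limit set of this action, and $\Omega = S^2 \setminus M$ is a domain of discontinuity on which $G$ acts properly discontinuously with quotient a $2$--orbifold. The target is a discrete faithful Kleinian action on $\widehat{\C}$ with limit set $\Lambda$ together with a $G$--equivariant quotient $\widehat{\C} \to S^2$ carrying $\Lambda$ onto $M$.

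The substance lies in producing the Kleinian structure. Replacing $G$ by its finite-index subgroup with no $2$--torsion, the splittings over finite, parabolic, and loxodromic subgroups together with the Louder--Touikan finiteness theorem reduce the problem to one-ended, cut-point-free vertex groups $G_i$ with planar boundaries $M_i$. To each $M_i$ containing no embedded \Sierpinski\ carpet I would apply Theorem~\ref{thm:main} to obtain a virtual Kleinian structure; to any vertex whose boundary is a carpet or a $2$--sphere I would instead invoke the assumed relative Cannon conjecture, reducing the carpet case to the sphere case by doubling across peripheral circles. A Klein--Maskit combination then assembles these pieces into a single discrete faithful representation $\rho \colon G_0 \to \PSL(2,\C)$ of a finite-index subgroup $G_0 \le G$, with limit set $\Lambda \subset \widehat{\C}$ carrying the convergence dynamics.

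To build the covering I would exhibit a $G_0$--equivariant continuous surjection $\pi \colon \widehat{\C} \to S^2$ that restricts to a homeomorphism between the domains of discontinuity and collapses, for each peripheral $P \in \P$ whose limit set in $\Lambda$ is a nondegenerate round circle, the $G_0$--orbit of that circle to the corresponding parabolic point of $M$. Such nondegenerate circles are exactly the places where the Kleinian peripheral structure differs from $\P$, as in the remark following Theorem~\ref{thm:main}, where a surface-group peripheral is realized by a Fuchsian piece rather than a cusp. Because the convergence action of $G$ on $S^2$ is faithful and $G_0$ has finite index, the Kleinian action and the collapsing map $\pi$ are essentially determined by the convergence dynamics, so they extend $G$--equivariantly; this yields a covering of the full $G$--action in the sense of Section~\ref{sec:Planar} and gives the first conclusion.

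For the conjugacy statement, suppose every $P \in \P$ is virtually abelian. Then each corresponding parabolic subgroup of $\rho(G_0)$ is a rank-one or rank-two cusp whose limit set is a single point, so no circles are collapsed and $\pi$ restricts to a homeomorphism $\Lambda \to M$; extending this across the matched domains of discontinuity produces a global $G$--equivariant homeomorphism $\widehat{\C} \to S^2$, i.e.\ a topological conjugacy. The step I expect to be hardest is the combination: assembling the vertex representations into a single globally discrete Kleinian group carrying the correct peripheral structure, and verifying that the collapsing map $\pi$ is genuinely equivariant and a covering rather than an arbitrary monotone quotient.
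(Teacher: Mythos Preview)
Your proposal has a genuine circularity problem. You plan to apply Theorem~\ref{thm:main} to the one-ended, cut-point-free vertex groups produced by the hierarchy, but in this paper Theorem~\ref{thm:main} is proved \emph{using} Theorem~\ref{thm:generalcase}: each rigid vertex group of the top-level JSJ acts on $S^2$ by \cite[Thm.~8.8]{HruskaWalsh}, and then Theorem~\ref{thm:generalcase} is invoked to conclude that this action is covered by a Kleinian one. So appealing to Theorem~\ref{thm:main} here is not available. Even setting aside the circularity, the conclusion of Theorem~\ref{thm:main} is only that the abstract group is virtually Kleinian; it does not assert that the given convergence action on $S^2$ is covered by, or conjugate to, a Kleinian action. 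Promoting an abstract virtual Kleinian structure to a covering of the specific convergence action you started with is exactly the content you are trying to prove, and your sketch of building $\pi$ by ``collapsing nondegenerate round circles'' does not supply this.

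The paper's argument runs in the opposite direction and avoids these issues. It first uses Theorem~\ref{thm:CoveringConvergence} to unpinch the non--virtually-abelian peripherals, producing at the outset a covering action with all peripherals virtually abelian; this is where the covering map comes from, not from collapsing after the fact. It then runs the elementary hierarchy all the way down to terminal groups that are finite, rank-two parabolic, Fuchsian, or (under relative Cannon) carpet groups---each of which is \emph{directly} seen to be topologically conjugate to a Kleinian group, without any appeal to Theorem~\ref{thm:main}. Going back up is done by the three combination results of Section~\ref{sec:Combination} (Propositions~\ref{prop:FiniteDecomposition}, \ref{prop:PeripheralSplitting}, \ref{prop:LoxodromicDecomposition}), each of which produces a topological \emph{conjugacy} (not merely a combination of abstract representations) via Maskit's theorems together with explicit null families of separating curves in $S^2$; the unpinching step is then undone via Proposition~\ref{prop:unpinchpinch}. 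You correctly identify the combination step as the heart of the matter, but your proposal treats it as a black box and routes the base case through a theorem that logically depends on the statement you are proving.
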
 

The above theorem is a key ingredient in the proof of Theorem~\ref{thm:main}.  However, the hypotheses of Theorem~\ref{thm:generalcase} are more general than those of Theorem~\ref{thm:main}; it applies even if $G$ is multi-ended or if the boundary has cut points. For instance, a relatively hyperbolic group whose planar boundary contains parabolic cut pairs is discussed in \cite[\S 3.4, Example~1]{GaboriauPaulin01} and \cite[Prop.~4.4]{HruskaWalsh}. According to \cite{GaboriauPaulin01}, the action of this group on its boundary extends to $S^2$ but has parabolic subgroups that are virtually free. (Thus, this action is not Kleinian.)
Theorem~\ref{thm:generalcase} implies that this action on $S^2$ is covered by a Kleinian action.

Kleinian groups have only virtually abelian parabolic subgroups.  
In order to prove Theorem~\ref{thm:generalcase}, we establish the following result, which allows us to remove hyperbolic subgroups from the peripheral structure while still acting as a convergence group on $S^2$. The techniques in the proof may be applicable to other situations. 

\begin{thm}
\label{thm:unpinch}
Let $(G, \mathcal{P})$ be a relatively hyperbolic group pair whose action on its planar boundary $M = \partial(G, \mathcal{P})$ extends to a convergence group action on $S^2$. Let $\mathcal{P}'$ be a subset of $\mathcal{P}$ closed under conjugation and containing all of the non-hyperbolic members of $\mathcal{P}$.  Then the boundary $\partial(G, \mathcal{P}')$ is also planar and the action on this boundary extends to a convergence group action on $S^2$ that covers the action of $(G, \mathcal{P})$ on $S^2$.
\end{thm}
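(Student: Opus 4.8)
The plan is to handle the algebra, the boundary collapse map, and the planar extension to $S^2$ in turn, with the genuine content concentrated in the last step.

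First I would record that $(G,\mathcal{P}')$ is itself relatively hyperbolic. The family $\mathcal{P}\setminus\mathcal{P}'$ is conjugacy-closed and, by hypothesis, consists of hyperbolic groups, each of which is hyperbolic relative to the empty collection. By the standard fact that one may delete a peripheral subgroup which is hyperbolic relative to a subcollection while retaining relative hyperbolicity (Osin; see also \Drutu--Sapir and Dahmani), the pair $(G,\mathcal{P}')$ is relatively hyperbolic, and each member of $\mathcal{P}\setminus\mathcal{P}'$ becomes relatively quasiconvex with limit set equal to its Gromov boundary.

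Next I would produce the $G$--equivariant collapse map $\pi\colon \partial(G,\mathcal{P}')\to \partial(G,\mathcal{P})=M$. Coarsening the peripheral structure from $\mathcal{P}'$ to $\mathcal{P}$ replaces, for each removed hyperbolic peripheral $P$ and each conjugate $gPg^{-1}$, the peripheral limit set $\partial(gPg^{-1})\subset\partial(G,\mathcal{P}')$ by a single bounded parabolic point of $M$. The induced map is a continuous $G$--equivariant surjection that collapses exactly these peripheral limit sets and is injective elsewhere; this is the expected behaviour of Bowditch boundaries under a change of peripheral structure. By construction $\pi$ exhibits $\partial(G,\mathcal{P}')$ as a blow-up of $M$ at the relevant parabolic points and is a covering of convergence actions in the sense of Section~\ref{sec:Planar}.

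The crux is to realize this blow-up inside a sphere. Here I would first check that each $\partial P$ is planar: since $P$ is parabolic for the action of $(G,\mathcal{P})$ on $S^2$, its limit set is the single fixed point $p$, so $P$ acts properly discontinuously on the plane $S^2\setminus\{p\}$, and as $P$ is a hyperbolic group this planar convergence action realizes $\partial P$ as an embedded subset of $S^2$. I would then build a sphere $\Sigma$ by equivariant local surgery on $S^2$: remove a small open disk about each parabolic point $gp$ and glue in a disk carrying an embedded copy of $\partial(gPg^{-1})$, matching the continua of $M$ entering $gp$ to the corresponding points of the inserted boundary. Because each inserted piece lives in a disk, $\Sigma$ is again a $2$--sphere, and collapsing the inserted copies recovers a map $\Sigma\to S^2$ extending $\pi$. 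Identifying $\partial(G,\mathcal{P}')$ with the limit set of $G$ in $\Sigma$ then shows it is planar, and transporting the $G$--action through the surgery gives the desired convergence action covering the original. I expect the main obstacle to be carrying out this last construction \emph{equivariantly and with uniform control}: the parabolic points $gp$ accumulate densely on the limit set, so the replacement disks and the inserted copies of $\partial P$ must shrink compatibly, and one must verify both that $\Sigma$ is Hausdorff and genuinely homeomorphic to $S^2$ and that the transported dynamics is a bona fide convergence action in which each $gPg^{-1}$ is relatively quasiconvex with limit set $\partial(gPg^{-1})$. The algebraic input and the existence of $\pi$ are comparatively routine; the point-set and dynamical control near the accumulating parabolic points is where the argument must do real work.
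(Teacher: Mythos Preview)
Your outline tracks the paper's strategy in its first two paragraphs: the relative hyperbolicity of $(G,\mathcal{P}')$ and the equivariant collapse map $\pi$ are indeed routine (the paper cites \Drutu--Sapir and Yang for exactly this), and your observation that $P$ acts properly on the plane $S^2\setminus\{p\}$, hence $\partial P$ embeds in a disk, is the content of the paper's Lemma~\ref{lem:BlowUpOnce}. So the setup is right.

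The gap is in your surgery step. You propose to ``remove a small open disk about each parabolic point $gp$ and glue in a disk carrying an embedded copy of $\partial(gPg^{-1})$.'' This cannot be done as stated: the parabolic points are dense in the limit set $M$, so there is no family of pairwise disjoint open disks in $S^2$ centred at these points. You acknowledge that the disks ``must shrink compatibly,'' but shrinking does not help---any open disk about $gp$ already contains infinitely many other parabolic points. No direct surgery on the given sphere will work.

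The paper's device for circumventing this is an inverse limit. For each \emph{finite} set $\mathcal{F}\subset\mathcal{P}_h$ one blows up only the parabolic points corresponding to members of $\mathcal{F}$, obtaining a sphere $S^2_{\mathcal{F}}$ with finitely many marked disks and an embedding $M_{\mathcal{F}}\hookrightarrow S^2_{\mathcal{F}}$ (here $M_{\mathcal{F}}$ is the partial blow-up of $M$ that uncollapses only the limit sets in $\mathcal{F}$). These finite blow-ups are unproblematic. The spheres $S^2_{\mathcal{F}}$ form an inverse system with monotone surjective bonding maps, and Brown's theorem (Theorem~\ref{thm:NearHomeo}) guarantees that the inverse limit $S^2_\infty$ is again a $2$--sphere. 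The boundary $M'=\partial(G,\mathcal{P}')$ is recovered as $\varprojlim M_{\mathcal{F}}$, which embeds in $S^2_\infty$. The $G$--action on $S^2_\infty$ is then defined piecewise: on the complement of the disks by lifting from $S^2_\emptyset=S^2$, and on each disk $D_g$ via the hyperbolic metric pushed forward from $D_e$.

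Finally, your phrase ``one must verify\ldots the transported dynamics is a bona fide convergence action'' understates the work involved. The paper gives a substantial argument here: one checks proper discontinuity on the triple space $\Theta(S^2_\infty)$ by exploiting the nullity of the disk family $\{D_g\}$ together with the convergence property of $G$ on $S^2_\emptyset$ and of each $P$ on the single-blow-up sphere $S^2_P$. This is not a formality and requires the inverse-limit description to organise the estimates.
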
 

Theorem~\ref{thm:generalcase} has the following consequence relating the Cannon and relative Cannon conjectures:

\begin{thm}
\label{thm:CannonEquivalent}
The Cannon conjecture for torsion-free groups is equivalent to the relative Cannon conjecture for torsion-free groups. 
\end{thm}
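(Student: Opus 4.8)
The plan is to prove the two implications separately, with the forward direction being immediate and the converse carrying all of the content. For the easy direction, note that a torsion-free word hyperbolic group $G$ with $\boundary G \isom S^2$ is precisely the relatively hyperbolic pair $(G, \emptyset)$, whose Bowditch boundary coincides with the Gromov boundary $S^2$. The relative Cannon conjecture then supplies a proper, isometric, geometrically finite action of $G$ on $\Hyp^3$; since $\emptyset$ contributes no parabolics, geometric finiteness forces the action to be cocompact, so $G$ is a cocompact Kleinian group. This is exactly the conclusion of the Cannon conjecture.

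For the converse, assume the Cannon conjecture for torsion-free groups and let $(G, \P)$ be a torsion-free relatively hyperbolic pair with Bowditch boundary $S^2$. The first step is to identify the peripheral subgroups: each $P \in \P$ stabilizes a bounded parabolic point of $S^2$ and hence acts properly and cocompactly by a convergence action on $S^2$ minus a point, that is, on $\R^2$. Being torsion-free, each such $P$ is virtually a closed surface group, hence either virtually $\Z^2$ or a non-elementary hyperbolic surface group. Since the desired conclusion --- a geometrically finite Kleinian action --- can only have virtually abelian parabolics, I would next apply Theorem~\ref{thm:unpinch} with $\P'$ taken to be the collection of virtually $\Z^2$ members of $\P$ (these are exactly the non-hyperbolic members). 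This produces a relatively hyperbolic pair $(G, \P')$ with \emph{virtually abelian} peripheral structure, planar boundary $M' = \boundary(G, \P')$, and an extension of the $G$--action to a convergence action on $S^2$ that covers the original action on $S^2$.

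The third step is to feed $(G, \P')$ into Theorem~\ref{thm:generalcase}, whose hypotheses now hold: $G$ is torsion-free (so any finite index subgroup has no element of order two), $M'$ is planar, and the action extends to a faithful convergence action on $S^2$. Because $\P'$ consists of virtually abelian groups, the second conclusion of Theorem~\ref{thm:generalcase} would give that the action of $G$ is topologically conjugate to a Kleinian action on $\widehat{\C}$ --- \emph{provided} the \Sierpinski--carpet clause of that theorem can be met. Transporting that topological conjugacy back through the covering equips $G$ with a proper, isometric, geometrically finite action on $\Hyp^3$, which is the relative Cannon conjecture.

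The main obstacle is precisely meeting that clause non-circularly. The planar set $M'$ may well contain an embedded \Sierpinski\ carpet, so the no-carpet alternative is unavailable, and invoking the relative Cannon conjecture outright would be circular. The resolution I would pursue is to show that the only input the clause genuinely needs here is the \emph{assumed} Cannon conjecture: the pieces responsible for any embedded carpet are limit sets of word hyperbolic subgroups whose own boundaries are $2$--spheres, so the ordinary Cannon conjecture --- not its relative form --- already makes those pieces Kleinian, and the global structure can then be assembled. Tracing through the proof of Theorem~\ref{thm:generalcase} to confirm that every appeal to ``relative Cannon'' for a sphere piece is in fact an appeal to Cannon is the crux of the argument; an alternative route, should that bookkeeping prove delicate, is to Dehn fill the remaining abelian cusps of $(G, \P')$ to manufacture genuinely hyperbolic groups with $2$--sphere boundary, apply Cannon to each, and recover a complete geometrically finite structure on $G$ by a Thurston-style geometric limit.
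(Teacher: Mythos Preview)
Your easy direction is fine. The hard direction has a genuine gap, and neither of your proposed fixes closes it.

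Route (a) rests on a false claim. The terminal pieces in the hierarchy that carry \Sierpinski\ carpets are \emph{not} word hyperbolic groups with $S^2$ boundary. They are relatively hyperbolic pairs $(H,\mathcal{Q})$ with carpet boundary and virtually $\Z^2$ peripherals; when one doubles $H$ across its peripheral circle stabilizers (as in the proof of Proposition~\ref{prop:GoingDown}), the resulting group $DH$ has Bowditch boundary $S^2$ but is still only \emph{relatively} hyperbolic with respect to virtually $\Z^2$ subgroups. So the ordinary Cannon conjecture does not apply to $DH$; you need precisely the relative version with abelian peripherals, which is what you are trying to prove.

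Route (b) fails because you are Dehn filling the wrong group. Once you remove the hyperbolic peripherals to form $(G,\P')$, the boundary $M'$ is a \Sierpinski\ carpet (by \cite[Lem.~12]{TshishikuWalsh20}), not $S^2$. The peripheral circles of that carpet come from the removed hyperbolic surface groups, not from the abelian cusps, so Dehn filling the $\Z^2$ members of $\P'$ will not produce a hyperbolic group with $S^2$ boundary, and Cannon is inapplicable.

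What the paper actually does is avoid Theorem~\ref{thm:generalcase} entirely for this implication. It first invokes the Dehn filling theorem of Groves--Manning--Sisto \cite[Cor.~1.4]{GrovesManningSisto19}, which (under Cannon for torsion-free groups) yields the relative Cannon conjecture for torsion-free pairs with \emph{free abelian} peripherals --- this is the substantial external input you are missing. Then, for a general torsion-free $(G,\P)$ with $S^2$ boundary, it passes to an orientation-preserving finite index subgroup $G'$, notes the peripherals are closed orientable surface groups, and \emph{doubles} $G'$ across the hyperbolic ones to obtain $DG'$ with only $\Z^2$ peripherals and $S^2$ boundary. Now Groves--Manning--Sisto applies to $DG'$, the subgroup $G'$ sits inside as a geometrically finite subgroup, and Theorem~\ref{thm:virtuallyKleinian} promotes this to $G$. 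The doubling step --- which turns the carpet back into a sphere --- is the idea you are missing; your route (b) is the right instinct but applied to a group with the wrong boundary.
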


The proof of Theorem~\ref{thm:CannonEquivalent} depends on a result of Groves--Manning--Sisto \cite{GrovesManningSisto19} and can be found in Section~\ref{sec:Applications}, where we also relate the Cannon conjecture to the geometrically finite case of Martin--Skora's covering conjecture.

As mentioned above, the proof of Theorem~\ref{thm:main} depends on Theorem~\ref{thm:generalcase}. However, it also uses the following theorem, which deals with graphs of virtually Kleinian groups in the case that the fundamental group of the graph of groups is not known to act on $S^2$. The proof of this theorem depends on Wise's virtually compact special theorem \cite{Wise_QCHierarchy} as well as a residual finiteness result due to Huang--Wise \cite{HuangWise_Stature}

\begin{thm}
\label{thm:puttogether}
Let $G$ be the fundamental group of a graph of groups where each vertex group is virtually Kleinian.  
Let $(M_v,P_v)$ be the pared $3$--manifold corresponding to the finite index torsion-free Kleinian subgroup $H_v$ of the vertex group $G_v$.
For each vertex $v$, suppose the adjacent edge spaces map onto a collection of disjoint incompressible annuli with union $Q_v$ in $\boundary M_v \setminus P_v$ such that either $(M_v,P_v \cup Q_v)$ is pared or $M_v$ is virtually a solid torus.
Then $G$ is virtually the fundamental group of a hyperbolic $3$--manifold.
\end{thm}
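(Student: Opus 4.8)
The plan is to promote the abstract graph-of-groups decomposition of $G$ to a geometric gluing of hyperbolic pieces and then invoke Thurston's hyperbolization. First I would work with the torsion-free finite-index Kleinian subgroups $H_v = \pi_1(M_v, P_v)$ at each vertex. Since each $H_v$ is geometrically finite, it is hyperbolic relative to its rank-two cusp subgroups and, by the cubulation results for hyperbolic $3$--manifold groups together with Agol's theorem, is virtually compact special. The edge groups are fundamental groups of the incompressible annuli $Q_v$, hence maximal cyclic and relatively quasiconvex in the adjacent vertex groups, while a combination theorem shows that $G$ is itself hyperbolic relative to the cusps. Thus the given graph of groups furnishes a quasiconvex hierarchy in the sense of Wise (malnormal after passage to a finite-index subgroup), and his virtually compact special theorem \cite{Wise_QCHierarchy} shows that $G$ is virtually compact special.

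Next I would use separability to realize a finite-index subgroup as an honest $3$--manifold group. Virtual specialness makes the relevant edge and vertex subgroups separable, and the stature/separability machinery of Huang--Wise \cite{HuangWise_Stature} lets me choose \emph{compatible} finite covers of the pieces: a finite-index torsion-free subgroup $G' \le G$ splits as the fundamental group of a graph of groups whose vertex groups are finite covers of the $H_v$ and whose edge maps lift to embeddings of the $Q_v$ onto disjoint embedded annuli in the corresponding boundaries. Gluing the covered pared manifolds along these annuli produces a compact orientable irreducible $3$--manifold $N$ with $\pi_1 N \isom G'$. The delicate point is coherence: the covers at the two ends of each edge must be chosen so that the two copies of an edge annulus become isotopic in their respective boundary components, and this is precisely what the separability of the (virtually abelian) edge groups provides.

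With $N$ in hand, I would analyze its JSJ/geometric decomposition, which by construction is carried by the graph of groups. Each vertex where $(M_v, P_v \cup Q_v)$ is pared contributes a piece that is atoroidal and acylindrical relative to its parabolic locus, hence admits a geometrically finite hyperbolic structure by Thurston's hyperbolization for pared Haken manifolds. The only non-hyperbolic vertices are the virtual solid tori; paredness of $(M_v, P_v \cup Q_v)$ fails there precisely because of a core-parallel essential annulus, so each such piece is a tube realizing a cyclic amalgamation or HNN extension. I would collapse these, either absorbing a solid torus into an adjacent hyperbolic piece or replacing a two-edge solid-torus vertex by a direct annular gluing of its neighbors. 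After these collapses every piece of $N$ is pared-hyperbolizable and no essential annulus or torus survives off the parabolic locus, so the interior of $N$ carries a complete geometrically finite hyperbolic structure and $G' = \pi_1 N$ is Kleinian. Hence $G$ is virtually the fundamental group of a hyperbolic $3$--manifold.

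I expect the main obstacle to be the second step: bridging the purely algebraic graph-of-groups data and the geometric gluing of manifolds. Wise's theorem supplies virtual specialness, but turning the resulting separability into a single finite cover in which \emph{all} of the edge annuli simultaneously become embedded and correctly matched---so that the pieces assemble into an actual $3$--manifold rather than an abstract amalgam---is where the Huang--Wise stature result does the real work, and keeping the pared condition intact under passage to these covers, so that Thurston hyperbolization still applies to each surviving piece, is the crux of the argument.
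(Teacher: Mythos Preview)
Your strategy is in the right spirit but works harder than necessary, and the paper's route is both shorter and avoids a step you gloss over.

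The paper does \emph{not} show that $G$ itself is virtually compact special. It only applies Wise's theorem (Theorem~\ref{thm:VCS}) to the individual vertex groups $G_v$, which are virtually Kleinian with noncompact quotient, to conclude that each $G_v$ is virtually compact special. Then Huang--Wise (Theorem~\ref{thm:HW_Stature}) is invoked exactly once, and only to conclude that $G$ is \emph{residually finite}. That is all the control needed: residual finiteness lets one pass to a single torsion-free finite-index subgroup $G'$ whose induced action on the Bass--Serre tree is regular (every two-ended vertex group is infinite cyclic with all adjacent edge inclusions isomorphisms). The induced graph-of-groups decomposition of $G'$ then \emph{automatically} has compatible vertex manifolds and edge annuli---there is no separate ``matching of covers'' problem to solve, because the vertex and edge groups of the induced splitting are already the right finite-index subgroups. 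Your second paragraph treats this compatibility as the crux and proposes to solve it via subgroup separability, but in fact it dissolves once one takes the induced splitting of a global finite-index subgroup.

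Your first paragraph also has a soft spot: to deduce that $G$ is virtually compact special via Wise's quasiconvex-hierarchy theorem you would need the edge groups to form an almost-malnormal family (or the splitting to be acylindrical), and you assert this only ``after passage to a finite-index subgroup'' without justification. The paper sidesteps this entirely by never needing $G$ to be VCS. Finally, the paper does not collapse solid tori or analyze a JSJ decomposition of the glued manifold; it glues the pieces directly via the pared combination result (Proposition~\ref{prop:ParedCombination}) to obtain a single pared $3$--manifold with nonempty boundary, and then applies Thurston hyperbolization once. Your collapsing step is a valid alternative, but it is not needed.
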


The rough outline of the proof of Theorem~\ref{thm:main} is as follows. 
We are given a group with planar boundary that might not act on $S^2$.  We assume that the boundary is a proper subset of $S^2$ that does not contain an embedded \Sierpinski\ carpet. (If it does contain a \Sierpinski\ carpet, then we must also assume the relative Cannon conjecture.) The main result of \cite{HruskaWalsh} allows us to conclude that each vertex group of its JSJ decomposition acts on $S^2$.  The second step is to apply Theorem~\ref{thm:unpinch}, which allows us to remove any peripheral subgroups that are not virtually abelian. Then we split along an elementary hierarchy---which is known to be finite by \cite{LouderTouikan17}---so that the terminal vertex groups are either virtual surface groups or finite groups (or possibly carpet groups, which are Kleinian by the relative Cannon conjecture).
These vertex groups are covered by Kleinian groups.  We put these vertex groups together using Thurston's hyperbolization theorem. 
At this point we have completed the proof of Theorem~\ref{thm:generalcase}. We conclude that each vertex group of the topmost JSJ splitting is covered by a Kleinian group.  Then we put these together virtually using Theorem~\ref{thm:puttogether}.

This paper is organized as follows. Section~\ref{sec:Convergence} reviews material on convergence groups, relatively hyperbolic group pairs, and their boundaries.  In Section~\ref{sec:Planar} we review some important planar topology which is used in later sections. Section~\ref{sec:unpinch} is devoted to unpinching, and here we prove Theorem~\ref{thm:unpinch}. Section~\ref{sec:Kleinian} is mainly a review of results we will use regarding Kleinian groups, with some slight improvements to the specific cases we will need. 
In Section~\ref{sec:Combination}, we prove three different combination of convergence group theorems for Kleinian actions on $S^2$: for splittings over finite, parabolic, and loxodromic groups. The elementary hierarchy of a relatively hyperbolic group pair and its effect on the boundary is described in Section~\ref{sec:ElementaryHierarchy}, which also  contains the proofs of Theorems~\ref{thm:generalcase}, \ref{thm:puttogether}, and~\ref{thm:main}.
Section~\ref{sec:Applications} contains some applications to various versions of the Cannon conjecture and in this section we prove Theorem~\ref{thm:equivalent}, which has Theorem~\ref{thm:CannonEquivalent} as an immediate corollary.   

\subsection{Acknowledgements}

We thank Daniel Groves, Peter \Haissinsky, and Jason Manning for helpful conversations.  Parts of this paper were developed during visits to Centre International de Rencontres Math\'{e}matiques (CIRM) in Luminy, France and Centre de Recherches Math\'{e}matiques (CRM) in Montreal, Canada. We thank these institutions for their support.  
The first author was partially supported by grant \#714338 from the Simons Foundation, and the second author was supported by NSF-2005353.

\section{Convergence groups and geometrical finiteness}
\label{sec:Convergence}

In this section, we discuss convergence groups and relative hyperbolicity. Since the main focus of this paper is on convergence groups, we define relative hyperbolicity in terms of geometrically finite convergence groups. For several other equivalent characterizations, see \cite{BowditchRelHyp,DrutuSapirTreeGraded,GrovesManning08DehnFilling,Hru-relqconv}.  

An action of a group $G$ on a compact metrizable space $M$ is a \emph{convergence group} action if, for any sequence $(g_i)$ of distinct elements in $G$,
there is a subsequence $(g_{n_i})$ and a pair of points $\zeta,\xi \in M$ such that
\[
   g_{n_i} \big| \bigl( M\setminus\{\zeta\}\bigr) \to \xi
\]
uniformly on compact sets. Such a subsequence is a \emph{collapsing subsequence}, and $\zeta$ and $\xi$ are its \emph{repelling} and \emph{attracting} points.
If $M$ has at least three points, the action is a convergence group action if and only if the action of $G$ on the space of distinct triples of points of $M$ is proper.
A convergence group action on a space $M$ with fewer than three points reduces to any action of a finite group on the empty set, any action of an arbitrary group on a point, and any finitely generated $2$--ended group acting on a two-point space. 
The \emph{limit set} $\Lambda{G}$ of a convergence group is the set of limit points of any orbit. If $M$ has fewer than three points, $M$ is considered to be the limit set by convention. If $\Lambda G$ has fewer than three points then $G$ is \emph{elementary}.
The complement of the limit set is the \emph{ordinary set} or \emph{domain of discontinuity} $\Omega G$.  See \cite{GehringMartin87,Tukia94,Bowditch99ConvergenceGroups} for more background on convergence groups.

A convergence group action of $G$ on $M$ is \emph{geometrically finite} if every point of $\Lambda G$ is either a conical limit point or a bounded parabolic point, which are defined as follows.
A point $\zeta \in M$ is a \emph{conical limit point} if there exists a sequence $(g_i)$ in $G$ and a pair of distinct points $\xi_0\ne\xi_1$ in $M$ such that
\[
   g_i \big| \bigl( M \setminus \{\zeta\}\bigr) \to \xi_0
   \qquad \text{and} \qquad
   g_i(\zeta) \to \xi_1.
\]
A point $\eta\in M$ is \emph{bounded parabolic} if
its stabilizer acts properly and cocompactly on $M \setminus\{\eta\}$.
The stabilizers of the bounded parabolic points are \emph{maximal parabolic subgroups}, and they are always infinite under the conventions above.  If $G$ acts as a geometrically finite convergence group on a compact metrizable space $M$, then $(G, \mathcal{P})$ is a \emph{relatively hyperbolic group pair}, where $\mathcal{P}$ is the collection of maximal parabolic subgroups. In this case, the space $\Lambda(G) \subseteq M$,  is known as the \emph{relatively hyperbolic boundary} or the \emph{Bowditch boundary} and is denoted $\boundary(G,\mathcal{P})$.
A proper action of a group $G$ on a proper $\delta$--hyperbolic space $X$ is \emph{geometrically finite} if the induced action on the Gromov boundary $\partial X$ is a geometrically finite convergence action.

A \emph{geometrically finite Kleinian group action} is a faithful, proper, geometrically finite, isometric action on $\Hyp^3$. Note that it will act as a geometrically finite convergence group on its limit set and also on $S^2$. 
By definition, a geometrically finite Kleinian group is relatively hyperbolic with respect to its maximal parabolic subgroups, and the limit set is its Bowditch boundary. 
In this case, the boundary is planar and the action naturally extends to a geometrically finite convergence action on $\widehat{\C} = \C \cup \{\infty\}$ (see \cite{GehringMartin87}).  In fact, the study of Kleinian groups inspired the study of more general convergence groups (see \cite{Fredenconv,Tukia94,Bowditch99ConvergenceGroups}).  

Geometrical finiteness of a Kleinian group depends only on the topological action by homeomorphisms on $S^2$, a fact that is implicit in \cite{BeardonMaskit74,Maskit88} and is made explicit in \cite{Bowditch95}.  In particular, if two convergence groups on $S^2$ are topologically conjugate and one is geometrically finite, then so is the other.
The topological characterization also implies that any convergence group on $S^2$ with a geometrically finite subgroup of finite index must itself be geometrically finite; see \cite[Prop.~VI.E.6]{Maskit88}.  

\section{Planar topology}
\label{sec:Planar}

This section collects background from various parts of planar topology. For the purposes of this paper, a topological space is \emph{planar} if it embeds in the $2$--sphere $S^2$.

A family of subsets of a metric space is a \emph{null family} if, for each $\epsilon>0$, only finitely many sets of the family have diameter greater than $\epsilon$.
The following is a special case of Moore's classical theorem on decompositions of $S^2$ (see, for instance, \cite[Thm.~61.IV.8]{Kuratowski_VolII} or \cite{Cannon78} for proofs).

\begin{thm}[Moore]
\label{thm:Moore}
Let $\mathcal{D}$ be a null family of pairwise disjoint closed proper subsets of $S^2$.
Suppose each member of $\mathcal{D}$ is connected and does not separate $S^2$.
Then the quotient space $S^2/\mathcal{D}$ is homeomorphic to $S^2$.
\end{thm}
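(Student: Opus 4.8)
The plan is to reduce the statement to the classical topological characterization of the $2$--sphere. First I would enlarge $\mathcal{D}$ to a decomposition $\mathcal{D}^\ast$ of all of $S^2$ by adjoining the singletons $\{x\}$ for each $x \notin \bigcup \mathcal{D}$, so that $\mathcal{D}^\ast$ partitions $S^2$ into pairwise disjoint closed connected sets and the quotient map $\pi \colon S^2 \to S^2/\mathcal{D} = S^2/\mathcal{D}^\ast$ is \emph{monotone} (every point--preimage is connected). The hypotheses really enter at this stage only in showing that $\mathcal{D}^\ast$ is \emph{upper semicontinuous}: given an element $E$ and an open set $U \supseteq E$, the null--family condition guarantees that only finitely many elements have diameter bounded below, so after discarding those (all disjoint from the compact set $E$) every element meeting a sufficiently small neighborhood of $E$ is either $E$ itself or is small and close to $E$, hence contained in $U$; this produces a saturated open $V$ with $E \subseteq V \subseteq U$. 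Upper semicontinuity makes $\pi$ a closed map, and the quotient of a compact metric space by an upper semicontinuous decomposition into compacta is again compact and metrizable.

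Next I would record that $Q := S^2/\mathcal{D}$ is a \emph{Peano continuum}: it is compact, connected, and metrizable by the previous paragraph, and it is locally connected because it is a continuous image of the Peano continuum $S^2$ (Hahn--Mazurkiewicz). It is nondegenerate since the members of $\mathcal{D}$ are proper subsets. I would then invoke the classical characterization of $S^2$ among nondegenerate Peano continua (Zippin; cf.\ Moore): such a space is homeomorphic to $S^2$ provided it contains a simple closed curve, no arc separates it, and every simple closed curve separates it. The whole problem is thereby reduced to verifying these separation properties for $Q$, which I would do by transferring questions about subsets of $Q$ to their $\pi$--preimages in $S^2$ and using that preimages of separations are saturated.

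The clean way to carry out the separation bookkeeping is cohomological. For a compact set $K \subseteq S^2$, Alexander duality gives $\tilde H_0(S^2 \setminus K) \cong \tilde{\check H}^1(K)$, so $K$ separates $S^2$ exactly when $\check H^1(K) \neq 0$, and into two pieces exactly when $\check H^1(K) \cong \Z$. Each fiber of $\pi$ is a non-separating continuum (by hypothesis for members of $\mathcal{D}$, trivially for singletons), hence has $\check H^1 = 0$ and is connected; so for any closed $A \subseteq Q$ the restriction $\pi \colon \pi^{-1}(A) \to A$ is a monotone surjection with acyclic fibers, and the Vietoris--Begle theorem yields $\check H^1(A) \cong \check H^1\bigl(\pi^{-1}(A)\bigr)$. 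Applying this with $A$ an arc gives $\check H^1(\pi^{-1}(A)) = 0$, so $\pi^{-1}(A)$ does not separate $S^2$ and therefore neither does $A$ separate $Q$ (the connected set $S^2 \setminus \pi^{-1}(A)$ maps onto $Q \setminus A$); applying it with $A$ a simple closed curve $J$ gives $\check H^1(\pi^{-1}(J)) \cong \Z$, so $S^2 \setminus \pi^{-1}(J)$ has exactly two components, each saturated, whose images are disjoint nonempty open sets covering $Q \setminus J$. The no-cut-point case is even more direct: the preimage of a would-be cut point is a single non-separating element, a contradiction; and a nondegenerate Peano continuum without cut points automatically contains a simple closed curve. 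Together these verify the characterization and finish the proof.

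I expect the main obstacle to be the simple-closed-curve separation property, i.e.\ the step asserting that $\pi^{-1}(J)$ separates $S^2$. The difficulty is that $\pi^{-1}(J)$ need not itself be a simple closed curve, so one cannot simply quote the Jordan curve theorem; the content is exactly the Vietoris--Begle argument, which is where the hypothesis that the elements of $\mathcal{D}$ are \emph{non-separating} is essential (it is what makes the fibers $\check H^1$--acyclic). An alternative to the whole characterization route would be to verify Bing's shrinkability criterion directly: a non-separating subcontinuum of $S^2$ is cellular, the null condition lets one shrink the finitely many large elements at a time inside small disks by ambient homeomorphisms, and shrinkability then gives $S^2/\mathcal{D} \cong S^2$; this avoids the cohomology but requires the machinery of cellularity and shrinkable decompositions.
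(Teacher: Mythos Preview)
The paper does not prove this statement; it records Moore's theorem as a classical black box and directs the reader to Kuratowski, Hunt, and Cannon for proofs. So there is no ``paper's own proof'' to compare against, and your proposal is strictly more than what the paper provides.

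Your outline is one of the standard routes to Moore's theorem and is sound. The reduction to an upper semicontinuous decomposition is correct (nullity plus closedness of the pieces gives upper semicontinuity, hence a closed quotient map and a compact metrizable quotient), and invoking the Zippin--Moore characterization of $S^2$ is exactly how this is done in, e.g., Kuratowski. The Vietoris--Begle step is the right tool for the separation bookkeeping: non-separating continua in $S^2$ have $\check H^1=0$ by Alexander duality, so the fibers of $\pi$ are acyclic in the relevant range, and the map induces isomorphisms on $\check H^1$ of closed sets; this cleanly handles both the ``arcs do not separate'' and ``Jordan curves do separate'' clauses. One small wording issue: local connectedness of $Q$ is not literally Hahn--Mazurkiewicz, but rather the elementary fact that a Hausdorff quotient of a compact locally connected space is locally connected (equivalently, the continuous Hausdorff image of a Peano continuum is Peano); you should cite it that way. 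The alternative you sketch via cellularity and Bing shrinking is essentially Cannon's approach in the reference the paper cites, so either path is acceptable here.
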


A related theorem is due to Morton Brown (see \cite{Brown60_InverseLimits} or \cite{Ancel87_NearHomeo}). We use this theorem in the ``blow-up'' operation in Section~\ref{sec:unpinch}.  Recall that a continuous map is \emph{monotone} if the pre-image of each point is connected. 

\begin{thm}[Brown]
\label{thm:NearHomeo}
Given an inverse sequence whose factor spaces are all homeomorphic to $S^2$ and whose bonding maps are all monotone surjections, the inverse limit space is also homeomorphic to $S^2$.
\end{thm}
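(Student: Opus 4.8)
The plan is to deduce this from Brown's general approximation theorem for inverse limits, whose hypothesis is that each bonding map is a \emph{near-homeomorphism}, that is, a uniform limit of homeomorphisms. That theorem asserts that the inverse limit of a sequence of compact metric spaces with near-homeomorphic bonding maps is homeomorphic to the factor spaces \cite{Brown60_InverseLimits}. Since all the factor spaces here are copies of $S^2$, once we know that each bonding map is a near-homeomorphism the conclusion is immediate. Thus the entire problem reduces to the following local statement: every monotone surjection $f \colon S^2 \to S^2$ is a near-homeomorphism.

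First I would record the structure of the fibers of such an $f$. Because $S^2$ is compact Hausdorff, $f$ is a closed map, so the decomposition $\mathcal{D}_f = \set{f^{-1}(y)}{y \in S^2}$ into point-preimages is upper semicontinuous, and monotonicity says each member is a continuum. The key geometric point is that \emph{no fiber separates} $S^2$. Indeed, if $C = f^{-1}(y)$ separated $S^2$, I would group the components of $S^2 \setminus C$ into two nonempty unions $U$ and $V$ and set $A = f(\bar U)$ and $B = f(S^2 \setminus U)$. Each is a continuum containing $y$, and $A \cup B = S^2$. Monotonicity forces $A \cap B = \{y\}$: any $z \ne y$ has connected preimage disjoint from $C$, hence contained in a single component of $S^2 \setminus C$, so $z$ cannot lie in both images. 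But then $y$ would be a cut point of $S^2$, which is impossible. Hence every fiber is a non-separating subcontinuum of $S^2$, and a non-separating subcontinuum of $S^2$ is cellular, being the nested intersection of a sequence of closed disks.

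It then remains to upgrade the quotient map of this cellular, upper semicontinuous decomposition to a near-homeomorphism. I would invoke Moore's theorem (Theorem~\ref{thm:Moore}) to identify $S^2/\mathcal{D}_f$ with $S^2$, so that $f$ is, up to a homeomorphism of the target, the decomposition map $\pi \colon S^2 \to S^2/\mathcal{D}_f$; and then use the shrinkability criterion for cellular decompositions of a surface to produce, for each $\epsilon > 0$, a homeomorphism of $S^2$ that shrinks every fiber to diameter less than $\epsilon$ while moving points a controlled amount. Composing $\pi$ with such shrinking homeomorphisms exhibits $\pi$, and hence $f$, as a uniform limit of homeomorphisms, completing the reduction.

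I expect the main obstacle to be precisely this last step: passing from ``the quotient is $S^2$'' (Moore) to ``the quotient map is a near-homeomorphism'' (shrinkability). The non-separation and resulting cellularity of the fibers is exactly what is needed to run the classical shrinking argument, which is the engine behind the near-homeomorphism results in \cite{Ancel87_NearHomeo}; arranging the shrinking homeomorphisms so that their cumulative effect on the inverse limit is controlled uniformly across the whole sequence is where the care is required.
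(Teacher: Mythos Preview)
The paper does not prove this theorem; it is stated as a background result and attributed to Morton Brown, with references to \cite{Brown60_InverseLimits} and \cite{Ancel87_NearHomeo}. Your sketch is the standard route to this fact and is essentially what is contained in those references: reduce to Brown's general approximation theorem by showing that a monotone surjection $S^2\to S^2$ is a near-homeomorphism, which in turn follows from the cellularity of the fibers and the Bing--Moore shrinking machinery.

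Two small remarks on the write-up. First, your appeal to Theorem~\ref{thm:Moore} is slightly off: the version stated in the paper is the \emph{null family} form of Moore's theorem, whereas the fiber decomposition $\mathcal{D}_f$ need not be null. In fact you do not need Moore's theorem at all here, since the target of $f$ is already $S^2$ by hypothesis; the only issue is approximating the given quotient map by homeomorphisms, which is exactly the shrinkability step you identify. Second, your non-separation argument for the fibers is correct, and the passage from ``non-separating continuum in $S^2$'' to ``cellular'' is justified by Alexander duality (the complement is an open disc), so the decomposition is cellular and upper semicontinuous, which is precisely the input required for the shrinking criterion in \cite{Ancel87_NearHomeo}.
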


\begin{defn} \label{def:covering}
Let $G,H \le \Homeo(S^2)$ be two convergence groups on $S^2$. Then $H$ is \emph{covered} by $G$ if there is an isomorphism $\phi\colon G \to H$ and a monotone surjective map $\eta\colon S^2\to S^2$ that is $\phi$--equivariant, in the sense that the following diagram commutes: 
\[
\xymatrix{
   S^2 \ar[r]^{g}\ar[d]^{\eta}  &  S^2 \ar[d]^{\eta} \\
   S^2 \ar[r]^{\phi(g)}   &  S^2
}
\]
Furthermore, we require that
$\eta\big| \eta^{-1}\bigl(\Omega(H)\bigr)$
is a homeomorphism.
\end{defn}


The following three geometrization results roughly say that a discrete group of homeomorphisms of a $2$--manifold always preserves a conformal structure and can be realized as a group of spherical, Euclidean, or hyperbolic isometries.
The first of these results, focusing on finite homeomorphism groups, is a classical theorem of \Kerekjarto\ (see \cite{Kolev06} for details).

\begin{thm}[\Kerekjarto]
\label{thm:Kerekjarto}
Every finite group of homeomorphisms of $S^2$ is topologically conjugate to a subgroup of the orthogonal group $O(3)$.
\end{thm}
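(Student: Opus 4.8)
The plan is to reduce this purely topological statement to a question in hyperbolic geometry, where it becomes elementary, and to isolate the genuinely topological content in a single smoothing step. Write $G \le \Homeo(S^2)$ for the given finite group.

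I would set up the easy endgame first. Suppose we already knew $G$ to be conjugate, by some homeomorphism of $S^2$, into the group $\mathrm{Conf}(S^2)$ of conformal automorphisms of the round sphere. Via the Poincar\'e extension $\mathrm{Conf}(S^2) \cong \Isom(\Hyp^3)$, so $G$ would act as a finite group of isometries of $\Hyp^3$. Any finite group of isometries of a $\CAT(0)$ space fixes a point---the circumcenter of an orbit, which exists and is unique by convexity (Cartan's fixed-point theorem)---so $G$ fixes some $x \in \Hyp^3$. The stabilizer of a point of $\Hyp^3$ is a conjugate of $O(3)$, acting on the boundary sphere $\widehat{\C}$ in the standard way, so conjugating by an isometry carrying $x$ to the center of the ball model places $G$ inside $O(3)$. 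This reduces the theorem to producing a topological conjugacy from the given action to a \emph{conformal} one.

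The heart of the matter is therefore to construct a $G$-invariant conformal structure on $S^2$, and my approach would be to (i) replace the topological action by a smooth one, (ii) average, and (iii) uniformize. Steps (ii)--(iii) are routine: given a smooth $G$-action, average any Riemannian metric over $G$ to obtain a $G$-invariant smooth metric $g$, so that $G \le \Isom(S^2,g)$; the uniformization theorem then provides a conformal diffeomorphism $h\colon (S^2,g)\to(S^2,g_0)$ onto the round sphere, whereupon $hGh^{-1}$ consists of conformal automorphisms of $(S^2,g_0)$, as required.

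The hard step---and the one responsible for the topological force of the theorem---is (i): a finite group acting on a surface merely by homeomorphisms must be topologically conjugate to a smooth (equivalently piecewise-linear) action. For a single periodic homeomorphism this is the classical Brouwer--\Kerekjarto\ theorem, proved by analyzing the fixed-point set; an orientation-preserving periodic homeomorphism of $S^2$ has exactly two fixed points by a Lefschetz-number computation, and one builds a conjugacy to the corresponding rotation, handling the orientation-reversing case through the square of the map. For the whole group one must upgrade this to a $G$-equivariant statement, for instance by producing a $G$-invariant triangulation of $S^2$, turning the action into a simplicial one, and then invoking the coincidence of the PL and smooth categories in dimension two. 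I would expect this equivariant smoothing to be the main obstacle, since naive averaging is unavailable in the purely topological setting: an arbitrary homeomorphism need not be quasiconformal, which also blocks a direct appeal to the measurable Riemann mapping theorem. This technical core is carried out in \cite{Kolev06}.
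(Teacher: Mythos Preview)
The paper does not provide its own proof of this theorem; it is stated as a classical result with a pointer to \cite{Kolev06} for details. So there is no in-paper argument to compare against, and your outline is the only proof on the table.

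Your strategy is sound: smooth the action, average a Riemannian metric over $G$, uniformize to obtain a conformal action, then use Poincar\'e extension and the Cartan fixed-point theorem in $\Hyp^3$ to land in $O(3)$. You are right that the entire difficulty lives in step~(i), the equivariant smoothing.

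Two points to tighten. First, Kolev's paper treats \emph{periodic} homeomorphisms---that is, cyclic groups---not arbitrary finite $G$. The passage from conjugating each element individually to a \emph{simultaneous} conjugacy for all of $G$ is not automatic, so your sentence ``this technical core is carried out in \cite{Kolev06}'' overstates what that reference provides. The standard way to close the gap is to use the Brouwer--\Kerekjarto\ local analysis at fixed points to see that $S^2/G$ is a good $2$--orbifold, and then apply $2$--orbifold geometrization to pull back a spherical structure; alternatively one constructs a $G$--invariant triangulation. Your sketch gestures at both options but does not commit to either, and this is precisely the step where the work lies.

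Second, a minor slip: the Lefschetz number of an orientation-preserving homeomorphism of $S^2$ is $2$, which forces the sum of fixed-point indices to be $2$, not the existence of exactly two fixed points. Establishing that the fixed set of a nontrivial periodic map is actually two points (rather than, say, a wild arc) is itself part of the delicate Brouwer--\Kerekjarto\ analysis, not a consequence of Lefschetz alone.
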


The following theorem is a consequence of \Kerekjarto's work together with the geometrization of $2$--dimensional orbifolds. See, for instance \cite[Thm.~3.1]{HruskaWalsh} for a proof.

\begin{thm}
\label{thm:OrbifoldGeometrization}
Suppose a group $G$ acts properly by homeomorphisms on a connected surface $X$.
Then $X$ admits a complete metric of constant curvature modeled on either $S^2$, $\E^2$, or $\Hyp^2$ such that the action is isometric.
\end{thm}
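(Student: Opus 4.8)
The plan is to combine \Kerekjarto's theorem (Theorem~\ref{thm:Kerekjarto}) with the geometrization of $2$--orbifolds, reducing the statement to the classical uniformization picture for surfaces and orbifolds. The starting point is the observation that a proper action by homeomorphisms on a connected surface $X$ yields a quotient orbifold $\mathcal{O} = X/G$ whose orbifold fundamental group is $G$ (modulo the kernel of the action, which is finite by properness and can be handled at the end). The singular points of $\mathcal{O}$ are exactly the images of points with nontrivial, necessarily finite, stabilizers; properness guarantees these stabilizers are finite and that the singular locus is discrete in $\mathcal{O}$.

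First I would reduce from the global action on $X$ to local models at the singular points. Near a point $x$ with finite stabilizer $G_x$, properness gives a $G_x$--invariant neighborhood on which $G_x$ acts as a finite group of homeomorphisms of a disk (equivalently, of $S^2$ after one-point compactification). Here \Kerekjarto's theorem enters: it conjugates $G_x$ to a subgroup of $O(3)$, hence to a standard finite rotation/reflection group, so each local action is topologically conjugate to a standard conformal (constant-curvature) model. This identifies the orbifold type of each cone point or corner reflector and shows the local structure of $\mathcal{O}$ is the standard one for a constant-curvature orbifold.

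Next I would invoke the geometrization of $2$--dimensional orbifolds (the orbifold uniformization theorem), which asserts that any good $2$--orbifold admits a complete metric of constant curvature $+1$, $0$, or $-1$ modeled on $S^2$, $\E^2$, or $\Hyp^2$ respectively; the trichotomy is governed by the orbifold Euler characteristic. The finitely many bad $2$--orbifolds (teardrops and footballs with mismatched cone orders) are precisely the ones that do not arise as quotients of a manifold by a proper group action—any orbifold that is genuinely a quotient $X/G$ as above is automatically good—so the geometrization applies. Pulling the resulting constant-curvature structure back through the orbifold covering $X \to \mathcal{O}$ equips $X$ with a complete constant-curvature metric for which the covering group, and hence $G$, acts by isometries.

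The step I expect to be the main obstacle, and the one requiring the most care, is verifying that the topological orbifold $X/G$ is \emph{good} (i.e.\ covered by a manifold with its constant-curvature geometry) and not one of the exceptional bad $2$--orbifolds, together with checking that the local conformal models supplied by \Kerekjarto\ patch together consistently with the global uniformizing structure rather than merely matching pointwise. For this reason I would lean on the cited proof in \cite[Thm.~3.1]{HruskaWalsh}, which carries out exactly this assembly, reconciling the local $O(3)$ models with the global orbifold geometrization; the remaining bookkeeping—passing to the possibly nontrivial finite kernel of the action and confirming that the isometric action descends correctly—is then routine.
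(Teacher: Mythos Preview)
Your proposal is correct and matches the paper's approach exactly: the paper does not give its own proof but simply states that the result follows from \Kerekjarto's theorem combined with the geometrization of $2$--dimensional orbifolds, citing \cite[Thm.~3.1]{HruskaWalsh} for the details. Your outline---local linearization of finite stabilizers via \Kerekjarto, identification of the quotient as a good $2$--orbifold, and pullback of the uniformizing constant-curvature metric---is precisely the argument that reference carries out.
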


If a discrete group action on the open disc extends to a convergence group action on the closed disc, one can identify the disc with the hyperbolic plane by the following theorem due to Martin--Tukia, which strengthens the conclusion of the preceding result.
We note that the much stronger classification theorem for convergence groups acting on $S^1$ of \cite{Tukia_Fuchsian,Gabai92,CassonJungreis94} uses this theorem as a key ingredient.

\begin{thm}[\cite{MartinTukia88}, Thm.~4.4]
\label{thm:MartinTukia}
Let $G \le \Homeo(D^2)$ be any convergence group on the disc $D^2$. Then $G$ is topologically conjugate to a Fuchsian group action on $\Hyp^2 \cup S^1$.
\end{thm}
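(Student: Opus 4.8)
The plan is to separate the dynamics of $G$ on the open disc from its dynamics on the boundary circle $S^1$, to geometrize the interior action using Theorem~\ref{thm:OrbifoldGeometrization}, and then to promote the resulting interior conjugacy across the boundary. Write $U = D^2\setminus S^1$ for the open disc. Since every element of $\Homeo(D^2)$ preserves both $S^1$ and $U$, the restriction of $G$ to $S^1$ is again a convergence group, and both $U$ and $S^1$ are $G$--invariant.

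\textbf{Locating the limit set.} First I would show $\Lambda G\subseteq S^1$. If some collapsing subsequence $(g_i)$ satisfied $g_i\big|(D^2\setminus\{\zeta\})\to\xi$ with $\zeta\in U$, then the $g_i$ would eventually carry all of $S^1$ into an arbitrarily small neighborhood of $\xi$; but $g_i(S^1)=S^1$ for every $i$, a contradiction. Hence $\zeta\in S^1$, and then $\xi\in S^1$ as well. Therefore $U\subseteq\Omega G$, so $G$ acts properly on the surface $U$.

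\textbf{Geometrizing the interior.} Applying Theorem~\ref{thm:OrbifoldGeometrization} to the proper action on $U$ yields a complete $G$--invariant constant--curvature metric. The spherical model is impossible, since $U\isom\R^2$ admits no complete metric of constant positive curvature. If the metric is Euclidean, then $G$ is a discrete group of isometries of $\E^2$, hence virtually abelian, hence elementary as a convergence group (finite or two--ended); such an action is directly topologically conjugate to an elliptic, parabolic, or loxodromic elementary Fuchsian action on $\Hyp^2\cup S^1$ (the finite case via Theorem~\ref{thm:Kerekjarto}). Otherwise the metric is hyperbolic, and we obtain a $G$--equivariant homeomorphism $h\colon U\to\Hyp^2$ conjugating $G$ to a Fuchsian group $\Gamma = hGh^{-1}\le\Isom(\Hyp^2)$.

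\textbf{Extending across the boundary.} It remains to extend $h$ to a homeomorphism $\bar h\colon D^2\to\Hyp^2\cup S^1$ conjugating the two actions on the closed discs; this is the heart of the argument and the main obstacle, since a purely interior equivariant homeomorphism need not extend to the compactifying circle. The point is that the boundary dynamics are already encoded in the interior action. Loxodromic elements of $G$ correspond under $h$ to hyperbolic elements of $\Gamma$, and one defines $\bar h$ on their fixed points by matching attracting to attracting and repelling to repelling. Since loxodromic fixed points are dense in $\Lambda G$ and in $\Lambda\Gamma$, and the assignment respects the circular order, it extends to an equivariant homeomorphism $\Lambda G\to\Lambda\Gamma$. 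The complementary arcs of $\Lambda G$ in $S^1$ are matched equivariantly with those of $\Lambda\Gamma$—their combinatorics being recorded by $h$ via the funnels of the quotient orbifold—and $\bar h$ is filled in over each arc. The delicate point is continuity of $\bar h$ along $\Lambda G$: this follows because both arc systems form null families, which is precisely where the convergence hypothesis on all of $D^2$, rather than merely on $U$, is used to control the boundary. Verifying that the glued map is a genuine homeomorphism intertwining the actions then completes the proof.
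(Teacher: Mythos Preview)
The paper does not give its own proof of this theorem: it is simply quoted from Martin--Tukia \cite{MartinTukia88}, so there is no in-paper argument to compare against.

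That said, your sketch has a genuine gap at the boundary-extension step, and this gap is essentially the whole content of the Martin--Tukia theorem. Theorem~\ref{thm:OrbifoldGeometrization} hands you \emph{some} $G$--equivariant homeomorphism $h\colon U\to\Hyp^2$, but an arbitrary self-homeomorphism of the open disc need not extend to the closed disc, and equivariance alone does not force it to (for instance, when $\Gamma$ is of the second kind, $h$ descends to a homeomorphism of an infinite-area hyperbolic surface, and such a map need not lift to a quasi-isometry of $\Hyp^2$). You then construct a boundary map $\bar h|_{S^1}$ by matching loxodromic fixed points and complementary arcs, but you never argue that this boundary map and the interior map $h$ fit together continuously: if $x_n\in U$ with $x_n\to p\in S^1$, there is no reason offered that $h(x_n)\to \bar h(p)$. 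The null-family observation you invoke controls only the continuity of $\bar h$ \emph{along} $S^1$, not the compatibility of $h$ with $\bar h|_{S^1}$ across the frontier. The same difficulty already appears in your elementary case: identifying $U$ with $\E^2$ does not by itself produce a conjugacy on $D^2$, and Theorem~\ref{thm:Kerekjarto} is stated for $S^2$, not for the closed disc.

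In short, steps one and two are fine, but step three is the theorem. Martin--Tukia's actual argument works rather differently and does substantial work to produce a conjugacy on the closed disc directly; your outline would need a real mechanism (e.g.\ producing a specific $h$ known to be, say, quasiconformal or quasi-isometric near the boundary) to bridge the interior and boundary conjugacies.
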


Recall that a Peano continuum is a compact, connected, locally connected, metrizable space.
The following theorem collects several classical results about the topology of planar Peano continua. See \cite[Thms.~61.II.10, 61.II.4, and 61.I.8$'$]{Kuratowski_VolII} or \cite[Prob.~19-f]{Milnor_Dynamics} for proofs.

\begin{thm}
\label{thm:PlanarPeano}
Let $M \subseteq S^2$ be a nontrivial planar Peano continuum.
   \begin{enumerate}
   \item
   \label{item:PeanoNull}
   The set of components of $S^2 \setminus M$ is a null family.
   \item
   \label{item:PeanoTorhorst}
   The frontier of each component of $S^2 \setminus M$ is itself a Peano continuum.
   \item
   \label{item:PeanoCutPoint}
   For each component $\Delta$ of $S^2 \setminus M$, a cut point of $\boundary\Delta$ is also a cut point of $M$.
   If $M$ has no cut points, then $\boundary\Delta$ is a simple closed curve.
   \end{enumerate}
\end{thm}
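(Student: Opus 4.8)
The three statements are classical, and the plan is to organize all of them around two structural facts about a single complementary component $\Delta$ of $S^2\setminus M$: that $\Delta$ is \emph{simply connected}, and that its frontier $\boundary\Delta$ is a Peano continuum. I would prove these in a convenient order rather than the order listed. First I would record simple connectivity. Writing $S^2\setminus\Delta = M\cup\bigcup_{j}\Delta_j$, where the $\Delta_j$ range over the remaining components, each frontier $\boundary\Delta_j$ lies in $M$, so every closure $\overline{\Delta_j}$ meets the connected set $M$; hence $S^2\setminus\Delta$ is a union of connected sets all meeting $M$ and is therefore connected. A domain in $S^2$ with connected complement is simply connected, and its frontier $\boundary\Delta$ is then a (nondegenerate) continuum.

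Next I would establish the Torhorst statement~(2), that $\boundary\Delta$ is locally connected. The input is that a Peano continuum is \emph{uniformly} locally connected: for every $\epsilon>0$ there is $\delta>0$ so that any two points of $M$ within distance $\delta$ lie in a connected subset of $M$ of diameter less than $\epsilon$. The goal is to upgrade this to connectedness \emph{inside the frontier}, producing arbitrarily small connected relative neighborhoods of each point of $\boundary\Delta$. I expect this to be the main obstacle: a small connected subset of $M$ joining two frontier points need not lie in $\boundary\Delta$, and controlling how such a set can be pushed to the frontier---using accessibility of frontier points from the simply connected region $\Delta$---is exactly the delicate classical step. Once~(2) holds, $\boundary\Delta$ is a Peano continuum bounding the simply connected domain $\Delta$, so \Caratheodory's theorem applies and the Riemann uniformization of $\Delta$ extends to a continuous surjection $g\colon S^1\to\boundary\Delta$.

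With this parametrization in hand the cut point statements in~(3) become tractable. For the first part, suppose $p$ is a cut point of $\boundary\Delta$, so that $\boundary\Delta\setminus\{p\}=A\sqcup B$ is separated. Since $p$ is accessible from $\Delta$, I would build a crosscut of $\Delta$ limiting on $p$ at both of its ends, whose union with $p$ is a Jordan curve $J$ separating $S^2$ into two open discs, with the two sides $A$ and $B$ landing in different discs. As $M\subseteq S^2\setminus\Delta$ meets both discs, $M\setminus\{p\}$ is disconnected, so $p$ is a cut point of $M$. For the second part, assume $M$ has no cut points; the contrapositive of the first part shows $\boundary\Delta$ has no cut points either. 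I would then argue that $g$ is injective: if $g(a)=g(b)=q$ with $a\ne b$, the image under $g$ of the chord joining $a$ to $b$ is a loop in $\overline{\Delta}$ based at $q$, i.e.\ a Jordan curve through $q$ separating the two arcs of $\boundary\Delta$ that meet at $q$, exhibiting $q$ as a cut point---a contradiction. Hence $g$ is a continuous bijection from $S^1$ onto $\boundary\Delta$, so $\boundary\Delta$ is a simple closed curve.

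Finally I would prove the nullity statement~(1) by contradiction. Suppose infinitely many components $\Delta_n$ satisfy $\diam\Delta_n\ge\epsilon$. First I would reduce to the case in which the frontiers are also large: on $S^2$ a continuum of small diameter bounds only one complementary region that can itself be large, and the $\Delta_n$ are pairwise disjoint, so only finitely many of them can have small frontier; discarding these, I may assume $\diam\boundary\Delta_n\ge\epsilon'$ for a fixed $\epsilon'>0$. The frontiers $\boundary\Delta_n$ are subcontinua of $M$, so a Hausdorff-convergent subsequence accumulates onto a subcontinuum of $M$ near some point $p\in M$ (the limit lies in $M$ because an interior point of a fixed component cannot be a limit of points drawn from infinitely many distinct components). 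Infinitely many of these frontiers then meet a small ball about $p$ while exiting it, and the intervening open domains $\Delta_n$ block any short connection within $M$ between points on different frontiers, contradicting uniform local connectedness of $M$ at $p$. Making this blocking precise, using the planar separation properties of the $\Delta_n$, is the one remaining technical point for this part.
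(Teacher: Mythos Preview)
The paper does not give its own proof of this theorem. It is stated as a collection of classical facts, with citations to Kuratowski's \emph{Topology}, Vol.~II (Theorems 61.II.10, 61.II.4, and 61.I.8$'$) and to Milnor's \emph{Dynamics in One Complex Variable} (Problem~19-f), and no further argument. So there is nothing in the paper to compare your proposal against.

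Your outline is a reasonable reconstruction of the classical arguments behind those references, and you have correctly identified where the real work lies: the Torhorst step (uniform local connectedness of $M$ does not automatically restrict to $\boundary\Delta$, and pushing small connections onto the frontier via accessibility from the simply connected side is genuinely the delicate point), and the planar blocking step at the end of your nullity argument. One small sloppiness: in part~(3) you use $g$ both for the boundary map $S^1\to\boundary\Delta$ and, implicitly, for the full \Caratheodory\ extension $D^2\to\overline\Delta$ when you speak of the image of a chord; make that explicit. Also, in your injectivity argument, the claim that the image of the chord separates the images of the two arcs of $S^1$ requires a short justification (e.g., via degree or via the \Schoenflies\ theorem applied to the Jordan curve just constructed). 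These are standard, but if you intend this as a self-contained proof rather than a sketch, they should be written out.
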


In order to better understand the complementary regions of a planar Peano continuum with cut points, we use the classical \Caratheodory--Torhorst theorem on boundary extensions of conformal mappings.
The version stated below combines several closely related results, namely Theorems 17.14, 17.12, and~17.4 of \cite{Milnor_Dynamics} and Proposition~2.5 of \cite{Pommerenke_BoundaryConformal}.

\begin{thm}[\Caratheodory--Torhorst]
\label{thm:Torhorst}
Let $\Delta \subset S^2$ be a simply connected open set whose frontier $\boundary\Delta$ is a nontrivial Peano continuum.
There exists a quotient map $q\colon D^2 \to \bar\Delta$ with the following properties.
\begin{enumerate}
\item
\label{item:TorhorstQuotient}
The map $q$ restricts to a homeomorphism between the open disc and $\Delta$ and restricts to a quotient $q\colon S^1 \to \boundary \Delta$ of frontiers.
\item
\label{item:TorhorstLifting}
Any homeomorphism of the pair $(\bar\Delta,\boundary\Delta)$ lifts to a unique homeomorphism of the pair $(D^2,S^1)$.
\item
\label{item:TorhorstDisconnected}
For each $\zeta \in \boundary\Delta$, the set $q^{-1}(\zeta)\subset S^1$ is totally disconnected.
\item
\label{item:TorhorstCutPoints}
If $\boundary\Delta \setminus \{\zeta\}$ has $m<\infty$ components, then $q^{-1}(\zeta)$ has cardinality $m$.
\end{enumerate}
\end{thm}

Building on the preceding theorem, any convergence group action on $\bar\Delta$ lifts to a convergence group action on $D^2$.  This conclusion can be useful since convergence groups on $D^2$ are always Fuchsian by Theorem~\ref{thm:MartinTukia}.

\begin{prop}[Lifting convergence groups]
\label{prop:LiftingConvergence}
Let $\Delta \subset S^2$ be a simply connected open set with $\boundary\Delta$ a nontrivial Peano continuum.
Let $q \colon D^2 \to \bar\Delta$ be the \Caratheodory--Torhorst quotient map.
Let $G$ act by homeomorphisms on the pair of spaces $(\bar\Delta,\boundary\Delta)$ as a faithful convergence group.
Then the induced action of $G$ on the pair $(D^2,S^1)$ is also a convergence group.
\end{prop}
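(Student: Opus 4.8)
The plan is to lift the given action through the \Caratheodory--Torhorst quotient $q\colon D^2 \to \bar\Delta$ and then verify the convergence property by hand. By part~(\ref{item:TorhorstLifting}) of Theorem~\ref{thm:Torhorst}, every $g \in G$, being a homeomorphism of the pair $(\bar\Delta,\partial\Delta)$, lifts to a unique homeomorphism $\tilde g$ of $(D^2,S^1)$. Uniqueness of lifts makes $g \mapsto \tilde g$ a homomorphism (since $\tilde g \of \tilde h$ is a lift of $g \of h$), and it is injective because $G$ acts faithfully on $\bar\Delta$ and $q$ is surjective; thus we obtain a faithful action of $G$ on $D^2$ preserving $S^1$ with $q \of \tilde g = g \of q$. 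To check convergence, fix a sequence of distinct $g_i$; since $G$ is a convergence group on $\bar\Delta$, after passing to a subsequence there are $a,b \in \bar\Delta$ with $g_i \big|\bigl(\bar\Delta \setminus \{a\}\bigr) \to b$ uniformly on compacta. The fibers $q^{-1}(a)$ and $q^{-1}(b)$ are finite: this is the standing hypothesis when the point lies in $\partial\Delta$, and is automatic when it lies in $\Delta$, where $q$ restricts to a homeomorphism. I will manufacture a single attracting point $\xi \in q^{-1}(b)$ and a single repelling point $\zeta \in q^{-1}(a)$ for the lifted sequence.

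The first step collapses everything away from the whole fiber $q^{-1}(a)$. The complement $D^2 \setminus q^{-1}(a)$ is connected, since removing the finite set $q^{-1}(a)$ leaves the interior intact. For any compact $L \subseteq D^2 \setminus q^{-1}(a)$, the set $q(L)$ is a compact subset of $\bar\Delta \setminus \{a\}$, so $g_i$ carries $q(L)$ into arbitrarily small neighborhoods $U$ of $b$, whence $\tilde g_i(L) \subseteq q^{-1}(U)$. As $U \downarrow \{b\}$ we have $q^{-1}(U) \downarrow q^{-1}(b)$ (continuity and compactness), so for small $U$ the components of $q^{-1}(U)$ separate the finitely many points of $q^{-1}(b)$; a connected image $\tilde g_i(L)$ must therefore lie near a single fiber point. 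Exhausting the connected set $D^2 \setminus q^{-1}(a)$ by connected compacta and using nestedness to reconcile these single limit points, we obtain, after a further subsequence, one point $\xi \in q^{-1}(b)$ with $\tilde g_i \big|\bigl(D^2 \setminus q^{-1}(a)\bigr) \to \xi$ uniformly on compacta.

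The second step upgrades this to a single repelling point. Reformulating the previous convergence: for every neighborhood $V$ of the finite set $q^{-1}(a)$ and every $\epsilon>0$ we have $\tilde g_i^{-1}\bigl(D^2 \setminus \ball{\xi}{\epsilon}\bigr) \subseteq V$ for all large $i$. Now $D^2 \setminus \ball{\xi}{\epsilon}$ is compact and \emph{connected}, so its image under the homeomorphism $\tilde g_i^{-1}$ is connected and contained in $V$; choosing $V$ with components that separate the points of $q^{-1}(a)$, this image lies near a single fiber point. Passing to a subsequence making this point independent of $i$ and letting $\epsilon \to 0$, we conclude $\tilde g_i^{-1}\big|\bigl(D^2 \setminus \{\xi\}\bigr) \to \zeta$ uniformly on compacta for a single $\zeta \in q^{-1}(a)$. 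A standard source--sink duality for sequences of homeomorphisms of a compact metric space---if $h_i\big|(X \setminus \{s\}) \to t$ uniformly on compacta, then $h_i^{-1}\big|(X \setminus \{t\}) \to s$---now gives $\tilde g_i \big|\bigl(D^2 \setminus \{\zeta\}\bigr) \to \xi$ uniformly on compacta, the exact collapsing behavior required. Restricting to the invariant circle $S^1$, the action on the pair $(D^2,S^1)$ is then a convergence group as well.

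The main obstacle is this second step. The repelling point $a$ may be a cut point of $\partial\Delta$, so that $q^{-1}(a)$ contains several points of $S^1$, whereas the definition of a convergence group allows deleting only a single point $\zeta$ from $D^2$; a priori the images $\tilde g_i$ of a neighborhood of a non-repelling fiber point could fail to converge to $\xi$. What rescues the argument is precisely the interplay of finite fibers with the connectedness of the large complement $D^2 \setminus \ball{\xi}{\epsilon}$: a connected set cannot spread across the separated components near $q^{-1}(a)$, so its preimage concentrates at one point. The remaining work is routine: the reconciliation of the single limits in the two exhaustion arguments (carried out by nestedness) and the elementary duality lemma quoted above, whose short proof is the standard diagonal argument.
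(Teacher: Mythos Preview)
Your proof is correct and rests on the same key insight as the paper's: connectedness, together with the finiteness of the fibers, forces the lifted collapsing sequence to concentrate at single preimage points rather than spreading across the whole fiber.

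The packaging is somewhat different, and worth noting. The paper treats the attracting and repelling sides symmetrically: it chooses exhaustions $K_1 \subset K_2 \subset \cdots$ of $D^2 \setminus q^{-1}(a)$ and $L_1 \subset L_2 \subset \cdots$ of $D^2 \setminus q^{-1}(b)$ by compact connected sets whose complementary components each isolate exactly one fiber point (using the ends structure of a disc minus a finite set), and then bootstraps from the symmetric condition $g_i(K_m) \cap L_n = \emptyset$ to pin down $\hat a$ and $\hat b$ in a single interleaved pass. Your argument is asymmetric: you first isolate the attracting point $\xi$ via an exhaustion of $D^2 \setminus q^{-1}(a)$, then exploit that $D^2 \setminus \ball{\xi}{\epsilon}$ is already connected (since $\xi$ is a \emph{single} point) to pin down the repeller $\zeta$, and finally invoke the standard source--sink duality to flip the direction. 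Your route avoids any explicit appeal to the ends of $D^2$ minus a finite set and is arguably a bit more modular; the paper's route handles both points at once without needing the separate duality lemma. Either way the substance is the same.
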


\begin{proof}
We will see that the convergence property on $D^2$ is a consequence of the convergence property on $\bar\Delta$.  Let $(g_i)$ be a sequence of distinct elements of $G$. After passing to a subsequence, there are points $\zeta,\xi \in \boundary\Delta$ with
\[
   g_i \bigm| \bigl( \bar\Delta \setminus \{\zeta\} \bigr) \to \xi
\]
uniformly on compact sets; \emph{i.e.}, for any compact $K' \subseteq \bar\Delta \setminus \{\zeta\}$ and $L' \subseteq \bar\Delta \setminus \{\xi\}$, the intersection $g_i(K') \cap L'$ is empty for almost all $i$.
It follows that for any compact $K \subset D^2 \setminus q^{-1}(\zeta)$ and $L \subset D^2 \setminus q^{-1}(\xi)$, we have
\begin{equation}
\label{eqn:Collapsing}
\tag{$\star$}
   g_i(K) \cap L = \emptyset
   \quad \text{for almost all $i$}.
\end{equation}

Choose an exhaustion $K_1 \subset K_2 \subset \cdots$ of $X_\zeta = D^2 \setminus q^{-1}(\zeta)$ by compact connected sets whose complementary components each have noncompact closure.
Then $D^2 \setminus K_m$ has only finitely many components, each of which intersects $q^{-1}(\zeta)$. Choose a similar exhaustion $L_1 \subset L_2 \subset \cdots$ of $X_\xi = D^2 \setminus q^{-1}(\xi)$.
By \eqref{eqn:Collapsing}, for each $j$ the compact set $g_i^{-1}(L_j)$ is disjoint from $K_j$ for almost all $i$. By a diagonal argument, there is a subsequence of $(g_i)$ so that for each $j$ the connected set $g_i^{-1}(L_j)$ lies in a fixed open component $U_j \subseteq U_{j-1}$ of $D^2 \setminus K_j$ whenever $i \ge j$.  Since $L_j \subseteq g_i(U_j)$, we can pass to a further subsequence so that for each $j$ the connected set $g_i(D^2 \setminus U_j)$ lies in a fixed open component $V_j \subseteq V_{j-1}$ of $D^2 \setminus L_j$ whenever $i \ge j$. 

By Theorem~\ref{thm:Torhorst}(\ref{item:TorhorstDisconnected}), any two points of $q^{-1}(\zeta)$ are separated by $K_j$ for some $j$, so the intersection $\bigcap_j U_j$ is a single point $\hat{\zeta} \in q^{-1}(\zeta)$ and $\{U_m\}$ is a neighborhood base at $\hat{\zeta}$.
Similarly, $\bigcap_j V_j$ is a single point $\hat{\xi} \in q^{-1}(\xi)$, and $\{V_n\}$ is a neighborhood base at $\hat{\xi}$.
For each $m,n$ choose $j \ge m,n$. Then
\[
   g_i(D^2 \setminus U_m) \subseteq g_i(D^2 \setminus U_j) \subseteq V_j \subseteq V_n
\]
holds when $i \ge j$, so $g_i \big| \bigl(D^2 \setminus \{\hat{\zeta}\} \bigr) \to \hat{\xi}$ uniformly on compact sets.
\end{proof}

Noting the similarity between \eqref{eqn:Collapsing} and a proper action, we recover the following folk result, which by \cite[\S 2]{Tukia94} implies that the number of ends of a finitely generated group is $0,1,2,$ or $\infty$. 

\begin{cor}
If $X$ is any connected, locally connected, locally compact metrizable space and $G$ acts properly on $X$, the extension to the Freudenthal compactification $X \cup \Ends(X)$ is a convergence group action of $G$.

In particular, if $G$ is finitely generated, the action of $G$ on $\Ends(G)$ is a convergence group action. \qed
\end{cor}



\section{Unpinching}
\label{sec:unpinch}

The goal of this section is to prove the following theorem, which states that any geometrically finite convergence group on $S^2$ is covered by such an action in which the parabolic subgroups are all virtually abelian. 

\begin{thm}
\label{thm:CoveringConvergence}
Suppose $(G,\mathcal{P})$ is relatively hyperbolic, and we have a partition $\mathcal{P}=\mathcal{Q} \sqcup \mathcal{P}_h$ into conjugacy invariant subfamilies such that all members of $\mathcal{P}_h$ are word hyperbolic.
Suppose the boundary $M=\boundary(G,\mathcal{P})$ is a subspace of $S^2$ and the action on $M$ extends to a convergence group action on $S^2$.

Then the boundary $\check{M} = \partial(G, \mathcal{Q})$ is planar and the action on $\check{M}$ extends to a convergence group action on $S^2$ covering the action of $(G, \mathcal{P})$ on $S^2$.  
\end{thm}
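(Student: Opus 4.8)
The plan is to construct the covering map $\eta \colon S^2 \to S^2$ by collapsing, in the boundary $M = \boundary(G,\mathcal{P})$, exactly the limit sets of the members of $\mathcal{P}_h$, and to show that this collapse extends to a $G$--equivariant monotone quotient of the ambient sphere. The starting observation is that passing from the peripheral structure $\mathcal{P}$ to the coarser structure $\mathcal{Q}$ corresponds, on the level of boundaries, to collapsing each horoball limit set $\Lambda P$ (for $P \in \mathcal{P}_h$) to a single point; this is the boundary-theoretic incarnation of forgetting that $P$ was parabolic. Concretely, I would let $\mathcal{D}$ be the family of subsets of $S^2$ consisting of all $G$--translates of the limit sets $\Lambda P$, $P \in \mathcal{P}_h$, together with all singletons of points not lying in any such translate. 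Since each $P \in \mathcal{P}_h$ is word hyperbolic and bounded parabolic in $(G,\mathcal{P})$, its limit set $\Lambda P \subseteq M$ is a Peano continuum, and I want to collapse each such set to a point.

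\emph{Verifying Moore's hypotheses.} The heart of the construction is to apply Moore's Theorem~\ref{thm:Moore} to the family $\mathcal{D}$. For this I must check that $\mathcal{D}$ is a null family of pairwise disjoint closed connected non-separating subsets of $S^2$. Pairwise disjointness of distinct translates $g\Lambda P$ follows from maximality of the parabolic subgroups and the fact that distinct bounded parabolic points have disjoint stabilizer limit sets. Each $\Lambda P$ is closed and connected (being the boundary of a one-ended word-hyperbolic group, or otherwise a point); non-separation of $S^2$ is where I expect to need planarity together with the convergence dynamics, ruling out that any $\Lambda P$ disconnects the sphere. The null-family condition is the crucial dynamical input: because the $G$--action on the space of distinct triples is proper (the convergence property), only finitely many translates $g\Lambda P$ can have diameter bounded below, so the family is null. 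Moore's theorem then yields a monotone surjection $\eta \colon S^2 \to S^2/\mathcal{D} \isom S^2$, and since $\mathcal{D}$ is $G$--invariant, the $G$--action descends through $\eta$, giving a commuting square as in Definition~\ref{def:covering}.

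\emph{Identifying the quotient with the new boundary and checking the convergence property.} Next I would argue that $\eta(M) = S^2/\mathcal{D}$ restricted to the image of $M$ is $G$--equivariantly homeomorphic to $\boundary(G,\mathcal{Q})$; this is exactly the statement that collapsing the hyperbolic horoball limit sets turns the Bowditch boundary for $\mathcal{P}$ into the Bowditch boundary for the coarser $\mathcal{Q}$, which one can verify from the geometrically-finite characterization in Section~\ref{sec:Convergence} by observing that each collapsed point becomes a new conical limit point. Planarity of $M'$ is then automatic since it sits inside the sphere $S^2/\mathcal{D}$. To see that the descended $G$--action on $S^2/\mathcal{D}$ is again a convergence action, I would transport a collapsing subsequence on the source sphere through $\eta$: given distinct $(g_i)$, pass to a collapsing subsequence with $g_{n_i}\mid S^2\setminus\{\zeta\} \to \xi$, and check that the images $\eta(\zeta), \eta(\xi)$ serve as the collapsing data downstairs, using that $\eta$ is monotone and that $\eta$ restricts to a homeomorphism over the ordinary set (so no collapsing data is lost in $\Omega$).

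\emph{Main obstacle.} The step I expect to be hardest is verifying the two geometric hypotheses of Moore simultaneously, namely that each $g\Lambda P$ is non-separating in $S^2$ and that the family is genuinely null with respect to a fixed metric on $S^2$. Non-separation must be extracted without assuming the action extends to $S^2$ in any controlled metric way beyond the convergence hypothesis, so I anticipate leaning on the planar Peano-continuum results of Theorem~\ref{thm:PlanarPeano} and on the local structure of the complementary regions via the \Caratheodory--Torhorst theorem. The nullity, while conceptually forced by properness on triples, requires care because the relevant diameters are measured in the ambient $S^2$ rather than in $M$; reconciling these is the technical crux, and I would handle it by a compactness-and-contradiction argument that a non-null subfamily would produce a sequence of translates forcing a non-proper configuration of triples.
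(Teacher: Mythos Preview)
Your plan has the direction of the boundary quotient reversed, and this is fatal. By Theorem~\ref{thm:CollapsingBoundaries} (\Drutu--Sapir, Yang), the natural $G$--equivariant quotient goes from $M' = \boundary(G,\mathcal{Q})$ \emph{onto} $M = \boundary(G,\mathcal{P})$, collapsing each limit set $\Lambda P \subset M'$ (for $P \in \mathcal{P}_h$) to the single parabolic point that $P$ fixes in $M$. In the given convergence action of $G$ on $S^2$ with limit set $M$, every $P \in \mathcal{P}_h$ is parabolic, so its limit set in that sphere is a \emph{single point}; your family $\mathcal{D}$ therefore consists entirely of singletons, Moore's theorem is vacuous, and the quotient $S^2/\mathcal{D}$ is just the original sphere with the original $M$ inside it. Your later identification of $\eta(M)$ with $\boundary(G,\mathcal{Q})$ cannot hold: removing peripheral subgroups makes the Bowditch boundary strictly \emph{larger}, not smaller.

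What is actually required is the inverse operation: starting from the given sphere containing $M$, one must \emph{blow up} each parabolic point of $P \in \mathcal{P}_h$ to a closed disc whose boundary circle carries $\Lambda P = \boundary P$, and then assemble all these blowups into a new $S^2$ containing $M'$ that maps down to the original. The paper does exactly this: Lemma~\ref{lem:BlowUpOnce} blows up a single parabolic point $P$--equivariantly to a disc (using Theorem~\ref{thm:OrbifoldGeometrization} to put a hyperbolic metric on the punctured sphere), Lemma~\ref{lem:CollapsedBoundary} realises $M'$ as an inverse limit of the partially-blown-up spaces $M_{\mathcal{F}}$, and Brown's Theorem~\ref{thm:NearHomeo} (not Moore's) shows that the corresponding inverse limit of spheres $S^2_\infty$ is again a $2$--sphere. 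The substantial remaining work is to define the $G$--action on $S^2_\infty$ by hand (it does not come for free, since the intermediate spaces $S^2_{\mathcal{F}}$ carry no $G$--action) and to verify the convergence property on $S^2_\infty$ by a careful properness argument on triples. Your proposal contains none of this machinery, and Moore's theorem cannot substitute for it because the construction goes the wrong way.
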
 

The proof of this theorem relies on the following result, which combines results of \Drutu--Sapir \cite[Cor.~1.14]{DrutuSapirTreeGraded} and Wen-yuan Yang \cite[Lem.~4.14]{Yang14_Peripheral}.

\begin{thm}[\Drutu--Sapir, Yang]
\label{thm:CollapsingBoundaries}
Suppose $(G,\mathcal{P})$ is relatively hyperbolic, and we have a partition $\mathcal{P}=\mathcal{Q} \sqcup \mathcal{P}_h$ into conjugacy invariant subfamilies such that all members of $\mathcal{P}_h$ are word hyperbolic.
Then $(G,\mathcal{Q})$ is also relatively hyperbolic, and there is a $G$--equivariant quotient map $\boundary(G,\mathcal{Q}) \to \boundary(G,\mathcal{P})$ obtained by collapsing the limit set of each member of $\mathcal{P}_h$ to a point.
\end{thm}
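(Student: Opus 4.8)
The plan is to prove the two assertions in turn, deriving the relative hyperbolicity of $(G,\mathcal{Q})$ from the asymptotically tree-graded characterization and then building the boundary map by a dynamical blow-down.

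First I would show that $(G,\mathcal{Q})$ is relatively hyperbolic. Since $(G,\mathcal{P})$ is relatively hyperbolic, $G$ is asymptotically tree-graded with respect to $\mathcal{P}$. Each member $P$ of $\mathcal{P}_h$ is word hyperbolic, so every asymptotic cone of $P$ is a real tree ($\R$--tree) and is therefore tautologically tree-graded with respect to its singletons. Refining the tree-graded structure on each cone of $G$ by replacing the pieces coming from $\mathcal{P}_h$ with these singletons shows that $G$ is asymptotically tree-graded with respect to the smaller collection $\mathcal{Q}$; this is exactly \Drutu--Sapir \cite[Cor.~1.14]{DrutuSapirTreeGraded}. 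The same result records that each $P \in \mathcal{P}_h$ is relatively quasiconvex in $(G,\mathcal{Q})$, and since $P$ is word hyperbolic it contains no $\mathcal{Q}$--parabolic element, so its limit set $\Lambda_{\mathcal{Q}}(P) \subseteq \partial(G,\mathcal{Q})$ is a copy of the Gromov boundary $\partial P$.

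Next I would construct the collapsing map. Let $\mathcal{D}$ be the $G$--invariant decomposition of $\partial(G,\mathcal{Q})$ whose non-singleton members are the limit sets $\Lambda_{\mathcal{Q}}(gPg^{-1})$ with $g \in G$ and $P \in \mathcal{P}_h$. Because $\mathcal{P}$ is an almost malnormal peripheral family, distinct conjugates of members of $\mathcal{P}_h$ meet in finite subgroups; as these conjugates are relatively quasiconvex with no common $\mathcal{Q}$--parabolic points, their limit sets are disjoint, so $\mathcal{D}$ is a decomposition into pairwise disjoint closed sets. There are finitely many conjugacy classes in $\mathcal{P}_h$ and the action on $\partial(G,\mathcal{Q})$ is a geometrically finite convergence action, so these limit sets form a null family; hence $\mathcal{D}$ is upper semicontinuous and the quotient $Z = \partial(G,\mathcal{Q})/\mathcal{D}$ is compact and metrizable, carrying an induced $G$--action and a $G$--equivariant quotient map $\eta\colon \partial(G,\mathcal{Q}) \to Z$. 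I would then verify that the action of $G$ on $Z$ is again a convergence action in which every surviving conical limit point stays conical while each collapsed set $\eta\bigl(\Lambda_{\mathcal{Q}}(gPg^{-1})\bigr)$ is a single point fixed by $gPg^{-1}$, and finally identify $Z$ with $\partial(G,\mathcal{P})$ using the uniqueness of the Bowditch boundary up to equivariant homeomorphism.

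The main obstacle is the dynamical verification in this last step, and specifically showing that each collapsed point is a \emph{bounded parabolic} point for the corresponding $P \in \mathcal{P}_h$. This reduces to proving that $P$ acts cocompactly on $\partial(G,\mathcal{Q}) \setminus \Lambda_{\mathcal{Q}}(P)$, which then descends to a cocompact action on $Z$ minus the collapsed point. This cocompactness is precisely the ``fullness'' of the relatively quasiconvex subgroup $P$ and is the substance of Yang \cite[Lem.~4.14]{Yang14_Peripheral}; it is what makes the blow-down of a copy of $\partial P$ to a parabolic fixed point reversible and $G$--equivariant. Once this is established, bounded parabolicity of the surviving $\mathcal{Q}$--points and conicality of all remaining limit points are routine, the action on $Z$ is geometrically finite with maximal parabolic collection $\mathcal{P} = \mathcal{Q} \sqcup \mathcal{P}_h$, and the desired identification $Z \cong \partial(G,\mathcal{P})$ follows.
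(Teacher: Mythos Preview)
The paper does not supply its own proof of this theorem; it is stated as a result from the literature, with the remark that it ``combines results of \Drutu--Sapir \cite[Cor.~1.14]{DrutuSapirTreeGraded} and Wen-yuan Yang \cite[Lem.~4.14]{Yang14\_Peripheral}.'' Your outline is a faithful unpacking of exactly that combination: you invoke \Drutu--Sapir's asymptotic-cone argument for the relative hyperbolicity of $(G,\mathcal{Q})$, and you correctly isolate Yang's contribution as the cocompactness (fullness) statement needed to make each collapsed limit set a bounded parabolic point in the quotient, after which uniqueness of the Bowditch boundary finishes the identification. This is essentially the approach the paper's attribution indicates.
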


Let $\mathbb{Q}$ and $\mathbb{P}_h$ be representatives of the finitely many conjugacy classes of members of $\mathcal{Q}$ and $\mathcal{P}_h$.
There is a natural one-to-one correspondence between cosets $gQ$ with $g\in G$ and $Q\in \mathbb{Q}$ and conjugates $gQg^{-1} \in \mathcal{Q}$, and a similar one-to-one correspondence for cosets of members of $\mathbb{P}_h$.
(This correspondence is bijective since each member of $\mathcal{P}$ is equal to its own normalizer.)

\begin{lem}
\label{lem:PartiallyCusped}
Choose $P \in \mathbb{P}_h$. Let $Y_P$ be the space obtained from the Cayley graph of $G$ by attaching a combinatorial horoball to each coset $gH$ with $g \in G$ and $H \in \mathbb{Q}\cup \mathbb{P}_h$ except for the trivial coset $P$.
Then $Y_P$ is $\delta$--hyperbolic, and its Gromov boundary is homeomorphic to the quotient space $M_P$ formed from $\check{M}= \boundary(G,\mathcal{Q})$ by collapsing the limit set of each $gH$ with $H \in \mathbb{P}_h$ to a point except for the trivial coset $P$.
\end{lem}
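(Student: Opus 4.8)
The plan is to recognize $Y_P$ as the result of cusping a uniformly quasiconvex, mutually separated family of subsets of the hyperbolic space built from $(G,\mathcal{Q})$, and then to read off its Gromov boundary from the effect of that cusping on the boundary. First I would set up the three relevant cusped spaces and the inclusions between them. Let $X(G,\mathcal{Q})$ and $X(G,\mathcal{P})$ be the combinatorial cusped spaces obtained from the Cayley graph of $G$ by attaching horoballs to all cosets of members of $\mathbb{Q}$, respectively all cosets of members of $\mathbb{Q}\cup\mathbb{P}_h$. By Theorem~\ref{thm:CollapsingBoundaries} the pair $(G,\mathcal{Q})$ is relatively hyperbolic, so $X(G,\mathcal{Q})$ is $\delta$--hyperbolic with Gromov boundary $M'=\partial(G,\mathcal{Q})$; likewise $X(G,\mathcal{P})$ has boundary $M$. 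The space $Y_P$ sits between them: it is obtained from $X(G,\mathcal{Q})$ by attaching a horoball to every coset of a member of $\mathbb{P}_h$ other than the single coset $P$. Equivalently, $Y_P$ is $X(G,\mathcal{P})$ with the horoball over $P$ deleted and its base coset $P$ retained in the Cayley graph.

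For the hyperbolicity of $Y_P$ I would invoke a combination theorem for cusping (cf.\ \cite{GrovesManning08DehnFilling}). Each member of $\mathbb{P}_h$ is a word-hyperbolic peripheral subgroup meeting every $\mathcal{Q}$--conjugate in a finite set, so it is relatively quasiconvex in $(G,\mathcal{Q})$ and, having no peripheral part of its own, its cosets form a uniformly quasiconvex family in $X(G,\mathcal{Q})$ with well-defined limit sets. Almost malnormality of the peripheral structure makes this family mutually separated. Deleting the single coset $P$ leaves uniform quasiconvexity and separation intact, so attaching horoballs to $\{gH : H\in\mathbb{P}_h\}\setminus\{P\}$ yields a $\delta$--hyperbolic space; that is, $Y_P$ is $\delta$--hyperbolic.

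For the boundary, the key point is that attaching a horoball to a quasiconvex subset collapses its limit set to a single parabolic point and is otherwise a homeomorphism of boundaries — this is precisely the mechanism underlying Theorem~\ref{thm:CollapsingBoundaries}, here applied to the family $\{gH : H\in\mathbb{P}_h\}\setminus\{P\}$. Thus the inclusion of $X(G,\mathcal{Q})$ into $Y_P$ induces a quotient $M'=\partial X(G,\mathcal{Q})\to\partial Y_P$ that collapses $\Lambda(gH)$ to a point for each cusped coset $gH$, namely for every coset of a member of $\mathbb{P}_h$ except $P$. By the definition of $M_P$ as the quotient of $M'$ collapsing exactly these limit sets, this identifies $\partial Y_P$ with $M_P$. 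As a check, one should confirm that both $\partial Y_P$ and $M_P$ map onto $M$ by collapsing the retained copy $\Lambda P=\partial P$ to the parabolic point fixed by $P$, and are homeomorphisms away from it.

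The main obstacle is that the clean statements being used — the \Drutu--Sapir/Yang collapsing theorem and the standard description of the boundary of a cusped space — are phrased $G$--equivariantly for a conjugation-invariant peripheral family, whereas the family cusped to form $Y_P$ omits the single coset $P$ and is therefore not $G$--invariant. The real work is to verify that the hypotheses feeding both results (uniform quasiconvexity and separation of the cusped family) and the boundary-collapse conclusion are purely coarse-geometric conditions on the family, so that they survive the removal of one coset. Once this is established the identification $\partial Y_P\cong M_P$ follows as above.
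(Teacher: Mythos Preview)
Your outline is correct and follows the same architecture as the paper: view $Y_P$ as the cusped space $Y=X(G,\mathcal{Q})$ with horoballs attached along all cosets of members of $\mathbb{P}_h$ except the single coset $P$, deduce hyperbolicity from a cusping/combination principle, and read off $\partial Y_P$ as the quotient of $M'=\partial Y$ collapsing the limit sets of the cusped cosets.

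The one place where the paper does something you do not is exactly the obstacle you flag at the end. You propose to check directly that uniform quasiconvexity and separation survive removal of one coset; the paper instead routes through the asymptotic--tree--graded viewpoint. By Sisto \cite[Prop.~4.9]{Sisto_MetricRelHyp}, hyperbolicity of $Z=X(G,\mathcal{P})$ is equivalent to $Y$ being asymptotically tree graded with respect to the family of cosets $\{gH : H\in\mathbb{P}_h\}$. The paper then makes the elementary observation that if a space is tree graded with respect to a family of pieces and one piece happens to be an $\R$--tree, that piece may be discarded and the space remains tree graded with respect to the smaller family. Since $P$ is word hyperbolic, every asymptotic cone of $P$ is an $\R$--tree, so $Y$ is asymptotically tree graded with respect to $\{gH\}\setminus\{P\}$. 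Sisto's result, applied in the non-equivariant geometric direction, then gives hyperbolicity of $Y_P$ and the desired description of geodesics, from which the boundary collapse $\partial Y\to\partial Y_P$ follows. This is the concrete ``purely coarse-geometric'' verification you left as the real work; it buys a clean one-line reason why removing a \emph{hyperbolic} peripheral coset is harmless, whereas your quasiconvexity/separation route would still need an explicit non-equivariant cusping theorem to cite.
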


This lemma generalizes Theorem~\ref{thm:CollapsingBoundaries}, but here the operations disregard the action of $G$.
The lemma relies on a result of Sisto \cite[Prop.~4.9]{Sisto_MetricRelHyp}, which is a geometric generalization of a theorem of Groves--Manning \cite{GrovesManning08DehnFilling}.

\begin{proof}  
In \cite{DrutuSapirTreeGraded}, \Drutu--Sapir study relatively hyperbolic spaces and groups in terms of the structure of their asymptotic cones.
In particular, they show that any asymptotic cone of such a space is {\emph tree graded} with respect to certain subspaces called pieces, which means that every simple closed curve is contained in a piece.

If a space $K$ is tree graded with respect to a collection of pieces $\mathcal{H}$, and some piece $H$ is a tree, then $K$ is tree graded with respect to the smaller collection of pieces $\mathcal{H} \setminus \{H\}$.  This simple observation is a key idea in the first part of our proof. 

We claim that if a geodesic space $X$ is asymptotically tree graded with respect to a collection $\mathcal{A}$ of subspaces and some $A \in \mathcal{A}$ is $\delta$--hyperbolic, then $X$ is asymptotically tree graded with respect to $\mathcal{A} \setminus \{A\}$.
To show the claim, we first note that $A$ is quasiconvex in $X$ (see \cite[Lem.~4.3]{DrutuSapirTreeGraded}), so
$A$ with the subspace metric is a $\delta$--hyperbolic $(1,\epsilon)$--quasigeodesic space, and any asymptotic cone of $A$ is a $0$--hyperbolic geodesic space---\emph{i.e.}, an $\R$--tree \cite[\S 3.4]{CDP90}.
Thus, $\text{Cone}(X)$ is also tree graded with respect to the ultralimits of members of $\mathcal{A} \setminus \{A\}$, which establishes the claim.

Since $(G, \mathcal{Q})$ is a relatively hyperbolic group pair, the cusped space $Y$ formed by gluing combinatorial horoballs to $G$ along all left cosets of members of $\mathbb{Q}$ is hyperbolic, as is the cusped space $Z$ formed by gluing combinatorial horoballs to $G$ along all left cosets of members of $\mathbb{P} = \mathbb{Q} \cup \mathbb{P}_h$, by \cite{GrovesManning08DehnFilling}.  
Observe that $Z$ is also obtained from $Y$ by gluing horoballs to the left cosets of members of $\mathbb{P}_h$.  Since $Z$ is hyperbolic, \cite[Prop.~4.9]{Sisto_MetricRelHyp} implies that the cusped space $Y$ is asymptotically tree graded with respect to the set of left cosets of members of $\mathbb{P}_h$.  If $P$ is a single member of $\mathbb{P}_h$, then $Y$ is also asymptotically tree graded with respect to the family $\set{gP'}{\text{$g\in G$ and $P' \in \mathbb{P}_h$}} \setminus \{P\}$ by the claim above.
Therefore, if horoballs are attached to ${Y}$ along each left coset in the family $\set{gP'}{\text{$g\in G$ and $P' \in \mathbb{P}_h$}} \setminus \{P\}$, we get a new hyperbolic space $Y_P$.

To complete the proof, we show that the Gromov boundary of $Y_P$ can be obtained as a certain quotient space of the Gromov boundary of $Y$.
Consider the inclusion $Y \to Y_P$ of $\delta$--hyperbolic spaces.  A geodesic ray in $Y$ has an image in $Y_P$ that is typically not a geodesic.
The precise relation between the geodesics of $Y$ and the geodesics of $Y_P$ is described in Sisto \cite[Prop.~4.9]{Sisto_MetricRelHyp}.
The description in \cite[Prop.~4.9]{Sisto_MetricRelHyp} shows that the inclusion $Y \to Y_P$ induces a quotient map $\boundary Y \to \boundary Y_P$ that collapses the limit set of each peripheral coset---except for the trivial coset $P$---to a point in the new boundary.
\end{proof}

In Lemma~\ref{lem:CollapsedBoundary}, we will show that the ``larger'' Bowditch boundary $\check{M} = \boundary(G,\mathcal{Q})$ can be described as an inverse limit of the spaces $M_P$ arising in the previous lemma. 

Before stating the lemma, we introduce some terminology.
Suppose $(G,\mathcal{P})$ and $\mathcal{P} = \mathcal{Q} \sqcup \mathcal{P}_h$ are as in the statement of Theorem~\ref{thm:CoveringConvergence}.
For each $P \in \mathcal{P}_h$, let $\omega_P\colon \check{M} \to M_P$ be the quotient obtained by collapsing the limit set of each member of $\mathcal{P}_h \setminus \{P\}$ to a point (as above), and let $\pi_P \colon M_P \to M$ be the quotient collapsing the limit set of $P$ to a point.
By Lemma~\ref{lem:PartiallyCusped}, each $M_P$ is Hausdorff. The system of spaces $\set{M_P}{P\in \mathcal{P}_h} \cup \{M\}$ and maps $\pi_P$ forms an inverse system (although its index set is not directed). 
We let $\varprojlim M_P$ denote the corresponding inverse limit, which is by definition the pullback of the family of maps $\pi_P$.

We note that, although $G$ does not act on $M_P$, the subgroup $P$ does, since $\check{M} \to M_P$ is $P$--equivariant.

\begin{lem}
\label{lem:CollapsedBoundary}
The inverse limit $\varprojlim M_{P}$ is homeomorphic to $\check{M}= \boundary(G,\mathcal{Q})$. 
\end{lem} 

\begin{proof}
All spaces involved are compact and Hausdorff, so the maps $\omega_P$ induce a continuous closed map $\omega\colon \check{M} \to \varprojlim M_P$.
If $\xi\in M$ is not fixed by $P$ then $\pi_P^{-1}(\xi)$ is a single point of $M_P$. If $\xi$ is fixed by $P$, then $\pi_P^{-1}(\xi) = \Lambda P \subseteq M_P$. 
Thus, the preimage of $\xi$ in $\varprojlim M_P$ is naturally homeomorphic to $\Lambda P$ if $\xi$ is fixed by some $P \in \mathcal{P}_h$ and is a single point if $\xi$ is fixed by no member of $\mathcal{P}_h$.
In particular, $\omega$ naturally induces a bijection between the preimages of $\xi$ in $\check{M}$ and in $\varprojlim M_P$. 
Therefore, each point of $\varprojlim M_P$ has exactly one preimage in $\check{M}$ under the map $\omega$; in other words, $\omega$ is a bijection.
\end{proof}

\begin{lem}[Blowing up one peripheral subgroup]
\label{lem:BlowUpOnce}
Suppose, as in Theorem~\ref{thm:CoveringConvergence}, that the boundary $M=\boundary(G,\mathcal{P})$ is a subspace of $S^2$ and the action on $M$ extends to a convergence group action on $S^2$.
Choose any $P \in \mathcal{P}_h$ with parabolic fixed point $\xi \in M$, and let $M_P$ be as above. Then there exists a Fuchsian action of $P$ on a $2$--sphere $S^2_P$ with a $P$--invariant disc $D^2_P$ and a $P$--equivariant embedding $i_P\colon M_P \to S^2_P$  such that the image of $M_P$ is disjoint from the interior of $D^2_P$.
If $(M_P,\Lambda P) \to (M,\xi)$ and $(S^2_P,D^2_P)\to (S^2,\xi)$ are the natural quotient maps, then the following diagram commutes and all maps are $P$--equivariant:
\begin{equation}
\label{eqn:BlowUpOnce}
\tag{$\dag$}
\begin{gathered}
\xymatrix{
   (M_P,\Lambda P) \ar[r]\ar[d]^{i_P}  &  (M,\xi) \ar[d]^i \\
   (S^2_P,D^2_P) \ar[r]   &  (S^2,\xi)
}
\end{gathered}
\end{equation}
\end{lem}

\begin{proof}
Let $Y_P$ be the space obtained from the Cayley graph of $G$ by attaching a combinatorial horoball to each subgraph stabilized by an element of $\mathcal{P} \setminus \{P\}$.
Then the Gromov boundary of $Y_P$ is equal to the quotient space $M_P$
by Lemma~\ref{lem:PartiallyCusped}.
Let $K_P$ be the quasiconvex subgraph of $Y_P$ corresponding to $P$.

Note that $M$ is the quotient of $M_P$ formed by collapsing $\Lambda P$ to a point $\{\xi\}$.
Let $\mathring{M}_P$ denote the space $M_P \setminus \Lambda P = M \setminus \{\xi\}$.
By hypothesis, $\mathring{M}_P$ embeds in the plane $\R^2 = S^2 \setminus \{\xi\}$, and the action of $P$ on $\mathring{M}_P$ extends to a proper action by homeomorphisms on $\R^2 = S^2 \setminus \{\xi\}$. 
By Theorem~\ref{thm:OrbifoldGeometrization}, there exists a $P$--invariant metric of constant curvature on $\R^2$ on which $P$ acts isometrically. 
We may assume that this metric is hyperbolic, since either $P$ is virtually cyclic or contains a nonabelian free group. 
The space $\mathring{M}_P$ embeds in this hyperbolic plane $\mathcal{H}$ such that the orbit $Px$ of some point $x$ in $\mathring{M}_P$ is a quasiconvex subspace of $\mathcal{H}$.  The limit set $\Lambda P$
in $M_P$ is the set of accumulation points of $K_P$ in $\boundary Y_P$, which may be identified with the Gromov boundary $\boundary P$ of the hyperbolic group $P$. Thus, it coincides with the limit set of the orbit $Px$ in $\mathcal{H}$.
It follows that the embedding $\mathring{M}_P \to \mathcal{H}$ extends to an embedding of $M_P = \mathring{M}_P \cup \Lambda P$ into the closed disk $\bar{\mathcal{H}} = D^2$. 
\end{proof}

Theorem~\ref{thm:CoveringConvergence} has some similarities with an unpublished result of de Souza \cite{deSouza_Blowups}, which has different hypotheses. Parts of the proof below are inspired by de Souza's methods of proof.

\begin{proof}[Proof of Theorem~\ref{thm:CoveringConvergence}]
Throughout the proof, we assume without loss of generality that the subgroups of $\mathcal{P}_h$ lie in one conjugacy class with representative $P$. Let $\xi\in M= \boundary(G,\mathcal{Q} \cup \mathcal{P}_h)$ be the parabolic point fixed by $P$.

\textbf{Claim:} The boundary $\check{M}= \boundary(G,\mathcal{Q})$ is planar.

Recall that $\check{M}$ is the inverse limit $\varprojlim M_H$ over the family of all $H \in \mathcal{P}_h$ by Lemma~\ref{lem:CollapsedBoundary}.
By hypothesis $M= \partial(G,\mathcal{P})$ embeds in a $2$--sphere, which we denote by $S^2$. Let $i_P \colon (M_P,\Lambda P) \to (S^2_P,D^2_P)$ be the $P$--equivariant map of pairs given by Lemma~\ref{lem:BlowUpOnce}.

Fix a set $A\subset G$ of representatives for the cosets $gP$ in $G$ such that the trivial coset $P$ is represented by the identity. For each conjugate $H=aPa^{-1}$ with $a \in A$, we let $S^2_H$ be a copy of $S^2_P$ on which $H$ acts by the rule $h(x) = (a^{-1}ha)(x)$. We similarly use $a$ to identify $M_H$ with a copy of $M_P$ (see Figure~\ref{fig:inverse}). 
\begin{figure}
\[
\xymatrix{
   (M_P,\Lambda P) \ar[r]^{a}\ar[d]  &  (M_H,\Lambda H) \ar[d] \\
   (M, \xi) \ar[r]^{a} &  \bigl(M,a(\xi)\bigr)
}
\qquad\qquad\qquad
\xymatrix{
   (S^2_P,D^2_P) \ar[r]^{a}\ar[d]  &  (S^2_H,D^2_H) \ar[d] \\
   (S^2, \xi) \ar[r]^{a} & \bigl( S^2,a(\xi) \bigr)
}
\]
\caption{If $H=aPa^{-1}$, we use $a$ to identify $M_H$ with $M_P$ and $S^2_H$ with $S^2_P$ as shown.
Each space in the left-hand diagram embeds in the respective space of the right-hand diagram. The embeddings $M_H \to S^2_H$ induce a limit embedding $\check{M} \to \check{S}^2$.}
\label{fig:inverse}
\end{figure} 
Then there is a natural $H$--equivariant embedding $i_H \colon (M_H,\Lambda H) \to (S^2_H,D^2_H)$, which satisfies a commutative diagram analogous to \eqref{eqn:BlowUpOnce} in Lemma~\ref{lem:BlowUpOnce}.

The system of $2$--spheres $\set{S^2_H}{H=aPa^{-1} \in \mathcal{P}_h} \cup \{S^2\}$ and maps $S^2_H \to S^2$ collapsing $D^2_H$ to the parabolic point $a(\xi)$ is an inverse system (whose index set is not directed). Let $\check{S}^2 = \varprojlim S^2_H$ denote its inverse limit, which is by definition the pullback of the family of maps $S^2_H \to S^2$.
If we enumerate the parabolic points, this inverse limit is equivalent to the limit of an inverse sequence (indexed by $\mathbb{N}$) of $2$--spheres in which the $j$th bonding map blows up the $j$th parabolic point to a disc. Thus, $\check{S}^2$ is a $2$--sphere by a theorem of M.~Brown (Theorem~\ref{thm:NearHomeo}).
The embeddings $i_H$ induce an embedding of inverse limits $i \colon \check{M} \to \check{S}^2$. In particular, $\check{M}$ is planar.

\textbf{Claim:} The given $G$--action on $\check{M}$ naturally extends to a $G$--action on $\check{S}^2$.

Suppose $H \in \mathcal{P}_h$ with $H=aPa^{-1}$ for $a \in A$. If $g \in G$ then $ga\in bP$ for some $b \in A$.
The element $g\in G$ induces a natural homeomorphism $S^2_H \to S^2_{bPb^{-1}}$ with rule $g(x) = b^{-1}ga(x)$---using the definition of $S^2_H$ and $S^2_{bPb^{-1}}$ as copies of $S^2_P$ and the fact that $b^{-1}ga \in P$.

The homeomorphisms defined above determine an action of $G$ on the product $\prod_{H \in \mathcal{P}_h} S^2_H$ by permuting the factors, which induces an action on the inverse limit $\check{S}^2 = \varprojlim S^2_H$. 
This action on $\check{S}^2$ extends the given action of $G$ on $\check{M}$.
Let $\pi \colon \check{M}\to M$  and $\pi \colon \check{S}^2 \to S^2$ be the natural projections of inverse limits; \emph{i.e.} collapsing each limit set $\Lambda H$ and each disc $D^2_H$ to a point.
We have a commutative diagram of $G$--equivariant maps
\[
\xymatrix{
   \check{M} \ar[r]^{i} \ar[d]^{\pi}  &  \check{S}^2 \ar[d]^{\pi} \\
   M \ar[r]^{i}   &  S^2
}
\]
For each $H\in \mathcal{P}_h$ the vertical projections above factor $H$--equivariantly as
\[
   \check{M} \longrightarrow M_H \longrightarrow M
   \qquad \text{and} \qquad
   \check{S}^2 \longrightarrow S^2_H \longrightarrow S^2.
\]
By Lemma~\ref{lem:BlowUpOnce}, the action of $H$ on $S^2_H$ is Fuchsian. In particular, $H$ acts as a convergence group on both $M_H$ and $S^2_H$.

\textbf{Claim:} The action of $H$ on $\check{S}^2$ is a convergence group action.

Let $(h_i)$ be a sequence of distinct group elements of $H$. The action on $S^2_H$ has a collapsing subsequence $(h_{n_i})$ with attracting and repelling points $\xi,\zeta\in S^2_H$ respectively. Since $\xi,\zeta \in D^2_H$, each has a unique preimage in $\check{S}^2$. Thus, $(h_{n_i})$ is also a collapsing sequence for the action of $H$ on $\check{S}^2$.

\textbf{Claim:} The action of $G$ on $\check{S}^2$ is a convergence group action.

We will show that $G$ acts properly on the space of distinct triples $\Theta(\check{S}^2)$.
Endow $\check{S}^2$ with any metric compatible with its topology.
Choose $\epsilon>0$ and let $K_\epsilon\subset \Theta(\check{S}^2)$ be the compact set of all triples whose pairwise distances are at least $\epsilon$.
Since $\Theta(\check{S}^2)$ is exhausted by the compact sets $K_\epsilon$, it suffices to show that each $K_\epsilon$ meets only finitely many of its $G$--translates.

Assume, by way of contradiction, that infinitely many $G$--translates $g_i (K_\epsilon)$ intersect $K_\epsilon$ for some $\epsilon>0$.  
If any coset $Hg$ with $H \in \mathcal{P}_h$ and $g\in G$ contains infinitely many elements of the sequence $(g_i)$, then $g_i g^{-1} \in H$ for infinitely many $i$.
In this case, $K_\epsilon \cup gK_\epsilon$ intersects infinitely many of its $H$--translates, contradicting the fact that $H$ is a convergence group on $\check{S}^2$.
Thus, we may assume that for each $H\in \mathcal{P}_h$ and each $g \in G$, only finitely many elements of the sequence $(g_i)$ lie in the coset $Hg$.

Since $K_\epsilon$ intersects $g_i(K_\epsilon)$, there is a triple $p_i \in K_\epsilon$ with $g_i(p_i) \in K_\epsilon$ for each $i$.   
Although $\pi\colon \check{S}^2 \to S^2$ is not injective, it induces a map $\pi \colon \Theta(\check{S}^2) \to \Theta(S^2) \cup \Delta$, where $\Delta$ is the diagonal of the triple space.
Since $G$ acts properly on $\Theta(S^2)$, the sequences $\pi(p_i)$ and $\pi\bigl( g_i(p_i) \bigr)=g_i\bigl(\pi(p_i) \bigr)$ cannot both lie in a compact subspace of $\Theta(S^2)$.
Thus, at least one of $\bigl( \pi(p_i)\bigr)$ or $\bigl(\pi (g_i(p_i))\bigr)$ escapes every compact set in $\Theta(S^2)$.
Note that the second case reduces to the first if one replaces $p_i$ with $g_i(p_i)$ and considers $(g_i^{-1})$ in place of $(g_i)$.

Suppose $\bigl( \pi(p_i) \bigr)$ escapes from every compact set in $\Theta(S^2)$. If $\pi(p_i) \in \Delta$ for infinitely many $i$, then each such triple $p_i$ contains two points in the same disc $D^2_{H_i}$ of $\check{S}^2$.
The diameter of $D^2_{H_i}$ is at least $\epsilon$ since $p_i \in K_\epsilon$, so some disc $D^2_H$ occurs infinitely often, which implies that infinitely many $g_i$ are in the same coset of $H$, a contradiction.
Thus, we may assume $\pi(p_i)\in \Theta(S^2)$; in other words, $\pi(p_i)$ is a distinct triple for each $i$.

Since the action of $G$ on $S^2$ is a convergence group, we can replace $(g_i)$ with a collapsing subsequence for the action on $S^2$ with attracting and repelling points $\xi,\zeta \in S^2$ respectively.
Each triple $\pi(p_i)$ contains at least two points $x'_i,y'_i$ not equal to $\zeta$. Thus, $g_i(x'_i)$ and $g_i(y'_i)$ both converge to $\xi$. Since $g_i(p_i) \in K_\epsilon$, the sequences $g_i(x_i)$ and $g_i(y_i)$ in $\check{S}^2$ cannot converge to the same point. So $\pi^{-1}(\xi)$ must be a disk $D^2_H$ for some $H \in \mathcal{P}_h$.

Pass to a subsequence so that the terms of $(g_i)$ lie in distinct $H$--cosets. 
Recall that $M=\boundary(G,\mathcal{Q} \cup \mathcal{P}_h)$ is the Gromov boundary of a $\delta$--hyperbolic cusped space $X$ on which $G$ acts properly \cite{BowditchRelHyp}. The action of $G$ on $X \cup M$ is a convergence group action \cite[Thm.~3A]{Tukia94}. Choose a basepoint $o \in X$ with orbit $G(o)$. The action of $H$ on $G(o) \cup \bigl( M \setminus \{\xi\} \bigr)$ has a compact fundamental domain $F$ (see \cite[Prop.~6.5]{BowditchRelHyp}). 
For each $i$, choose $h_i \in H$ with $h_i^{-1} g_i(o)\in F$, and let $f_i = h_i^{-1} g_i$, so that $g_i = h_i f_i$. Note that $(f_i)$ is a sequence of distinct elements of $G$, since the $g_i$ lie in distinct $H$--cosets.

For each fixed $h \in H$, the sequence $h f_i(o)$ lies in the compact set $hF$ for all $i$, so $\bigl( h f_i(o) \bigr)$ does not have $\xi$ as an accumulation point. Since $o\ne \zeta$, we have $g_i(o) = h_i f_i(o) \to \xi$ as $i \to \infty$. Thus, the sequence $(h_i)$ repeats each term only finitely many times. Pass to a subsequence so that $(h_i)$ is a sequence of distinct elements of $H$.

Replace $(g_i) = (h_i f_i)$ with a further subsequence such that $(h_i)$ is a collapsing sequence for the $H$--action on $\check{S}^2$ with attracting and repelling points $\mu,\nu \in \boundary D^2_H = \pi^{-1}(\xi)$ respectively, and such that $(f_i)$ is a collapsing sequence for the action on $M$ and also for the extended convergence actions on $S^2$ and on $X \cup M$. Let $\alpha,\beta\in M$ be the attracting and repelling points of $(f_i)$. Since $f_i(o) \in F$ and $o \ne \beta$, it follows that $f_i(o) \to \alpha \in F$. In particular, $\alpha \ne \xi$.

Choose a compact set $K \subset \check{S}^2 \setminus \pi^{-1}(\beta)$. Then $f_i(\pi K)$ converges to the point $\alpha$ in $S^2$. Since $\pi(\nu)=\xi\ne \alpha$, we have
$\nu \notin \pi^{-1}(\alpha)$. So the sets $g_i(K) = h_i f_i(K)$ converge to the attracting point $\mu$ of $(h_i)$. In other words, the restriction of $g_i$ to $\check{S}^2 \setminus \pi^{-1}(\beta)$ converges uniformly on compact sets to a constant.
This is a contradiction, since the points of the triple $p_i$ are separated by a distance at least $\epsilon$, and at least two of them are not in $\pi^{-1}(\beta)$. Therefore, the action of $G$ on $\Theta(\check{S}^2)$ must be proper; \emph{i.e.}, the action of $G$ on $\check{S}^2$ is a convergence group action.

\textbf{Claim:}
The action of $G$ on $\check{S}^2$ covers the action of $G$ on $S^2$.

The quotient $\pi\colon \check{S}^2 \to S^2$ collapses each disc $D_g$ to a point, so it is monotone.
By definition, $\pi$ is $G$--equivariant.
Also observe that $\Omega(G) \subset S^2$ is disjoint from the images of the discs $D_g$, so the injective quotient map $\pi | \pi^{-1}(\Omega G)$ is a homeomorphism.
\end{proof}

\section{Kleinian groups and $3$--manifolds}
\label{sec:Kleinian}

In this section, we discuss some theorems and their consequences regarding hyperbolic structures on $3$--manifolds and their induced Kleinian actions on $\widehat{\C}$. Many theorems in this section are variations of well-known results that will be used throughout the rest of this paper.

\begin{defn}[Pared $3$--manifolds]
\label{defn:Pared}
A \emph{pared $3$--manifold} is a pair $(M,P)$ satisfying the following conditions:
   \begin{enumerate}
   \item $M$ is a compact orientable irreducible $3$--manifold.
   \item $P \subseteq \boundary M$ is a disjoint union of incompressible tori and annuli.
   \item No two components of $P$ are isotopic in $\boundary M$.
   \item Every noncyclic abelian subgroup of $\pi_1(M)$ is conjugate to a subgroup of $\pi_1(P_0)$ for some component $P_0$ of $P$.
   \item There are no essential cylinders $(A,\boundary A) \hookrightarrow (M,P)$.
   \end{enumerate}
The submanifold $P$ is the \emph{parabolic locus of $M$}.
\end{defn}

Similarly, a \emph{pared $3$--orbifold} is the quotient of a pared $3$--manifold by a finite group of homeomorphisms that leaves the pared structure invariant.  

The Hyperbolization Theorem has the following form for pared $3$--manifolds with boundary (see \cite{Kapovich01}).

\begin{thm}[Thurston's Hyperbolization]
\label{thm:Hyperbolization}
If $(M,P)$ is a pared $3$--manifold with $\boundary M$ nonempty, then $M$ admits a complete geometrically finite hyperbolic structure with parabolic locus $P$.
\end{thm}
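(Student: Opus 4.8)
The plan is to prove this by induction along a \emph{hierarchy} for $M$, following Thurston's original strategy for hyperbolizing Haken manifolds. Since $M$ is compact, orientable, and irreducible with $\partial M \neq \emptyset$, it is Haken, so it admits a finite hierarchy
\[
   (M,P) = (M_0,P_0) \supset (M_1,P_1) \supset \cdots \supset (M_n,P_n),
\]
in which each $(M_{k+1},P_{k+1})$ is obtained by splitting $M_k$ along a properly embedded two-sided incompressible surface $S_k$ disjoint from $P_k$, the parabolic locus being updated so that the pared conditions persist, and in which $(M_n,P_n)$ is a disjoint union of balls (with $P_n$ empty). It is worth noting at the outset that pared condition~(5), the absence of essential cylinders, is exactly an \emph{acylindricity} hypothesis; this is the feature that will drive the analytic step below.

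The base case is trivial, since a ball carries the structure of $\Hyp^3$ with trivial holonomy. For the inductive step I would assume $(M_{k+1},P_{k+1})$ admits a complete geometrically finite hyperbolic structure and reglue along $S_k$ to produce such a structure on $(M_k,P_k)$. Here I invoke Ahlfors--Bers deformation theory together with Marden's stability theorem: the geometrically finite structures on $(M_{k+1},P_{k+1})$ with the prescribed parabolic locus are parametrized by the Teichm\"{u}ller space $\mathcal{T}$ of the non-parabolic (conformal) boundary. Regluing $S_k$ to itself then amounts to finding a fixed point of the composition $\tau \circ \sigma \colon \mathcal{T} \to \mathcal{T}$, where $\tau$ is the involution induced by the gluing and $\sigma$ is the \emph{skinning map}, which records the conformal structure that the hyperbolic manifold induces on the two copies of $S_k$ in its conformal boundary. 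A fixed point yields matching geometry on the two sides of $S_k$, and the Maskit combination theorems assemble the pieces into a geometrically finite structure on $M_k$ whose parabolic locus is exactly $P_k$.

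The hard part, and the real content of the theorem, is the existence of this fixed point. When the surface $S_k$ is not a fiber of a fibration---the generic, acylindrical case---one applies Thurston's bounded image theorem: acylindricity (pared condition~(5)) forces the image of $\sigma$ to be a bounded subset of $\mathcal{T}$, whence a Brouwer-type argument produces a fixed point of $\tau \circ \sigma$. When instead a stage of the hierarchy exhibits $M_k$ as fibering over the circle (so $S_k$ is a fiber and $\sigma$ need not have bounded image), this argument fails and must be replaced by a direct analysis of the mapping torus through Thurston's double limit theorem, which controls sequences of quasi-Fuchsian structures degenerating along the stable and unstable laminations of the monodromy. Establishing these two fixed-point results is by far the main obstacle; granting them, the induction closes, and one checks that the structure produced on $(M,P)$ is complete, geometrically finite, and has parabolic locus precisely $P$, as claimed.
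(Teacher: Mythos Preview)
The paper does not prove this theorem; it is stated as a known result with a reference to \cite{Kapovich01}, and is used as a black box throughout. Your outline is a reasonable high-level sketch of Thurston's original strategy and goes well beyond what the paper itself provides.

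One point worth flagging: you identify pared condition~(5) with the acylindricity hypothesis that drives the bounded image theorem, but condition~(5) only forbids essential annuli $(A,\partial A)\hookrightarrow (M,P)$ with boundary in the parabolic locus $P$. The bounded image theorem instead requires that there be no essential annuli with boundary in $\partial M \setminus P$---a different and independent condition. When such annuli exist but the manifold does not fiber, Thurston's proof does not proceed directly via bounded image; one must first pass through the characteristic submanifold (the window) and iterate, or use McMullen's contraction argument for the skinning map. So the trichotomy is really acylindrical / cylindrical non-fibered / fibered, and pared condition~(5) does not by itself place you in the first case. This is a standard subtlety in expositions of the proof, not a fatal gap in your sketch, but since the paper simply cites the result there is nothing further to compare.
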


By work of Bowditch, geometrical finiteness implies relative hyperbolicity in the following sense (see \cite{Bowditch93,BowditchRelHyp}).

\begin{cor}
Let $(M,P)$ be a pared $3$--manifold with nonempty boundary.
Then $G=\pi_1(M)$ is relatively hyperbolic with respect to the fundamental groups of components of the parabolic locus.
\end{cor}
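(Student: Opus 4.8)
The plan is to feed the hyperbolic structure supplied by Theorem~\ref{thm:Hyperbolization} into the conventions of Section~\ref{sec:Convergence} that tie geometrically finite Kleinian actions to relatively hyperbolic group pairs. First I would apply Thurston's Hyperbolization Theorem: because $(M,P)$ is pared with $\boundary M \ne \emptyset$, the interior of $M$ carries a complete geometrically finite hyperbolic structure whose parabolic locus is exactly $P$. The holonomy of this structure exhibits $G = \pi_1(M)$ as a geometrically finite Kleinian group, that is, a faithful, proper, geometrically finite, isometric action on $\Hyp^3$.

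Next I would invoke the remark preceding the statement: by Bowditch's work, together with the conventions of Section~\ref{sec:Convergence}, a geometrically finite Kleinian group is relatively hyperbolic with respect to its family $\mathcal{P}$ of maximal parabolic subgroups, its Bowditch boundary being the limit set in $S^2 = \boundary \Hyp^3$. Concretely, the action on $\Hyp^3$ induces a geometrically finite convergence action on $S^2$ in which every limit point is conical or bounded parabolic, and the stabilizers of the bounded parabolic points are precisely the maximal parabolic subgroups; this is exactly what it means for $(G,\mathcal{P})$ to be a relatively hyperbolic group pair.

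It remains only to identify the members of $\mathcal{P}$ with the fundamental groups of the components of the parabolic locus, and this is the one point deserving care. Here one uses the standard correspondence between the cusps of a geometrically finite hyperbolic $3$--manifold and its parabolic locus: the cusps are in bijection with the components of $P$, a torus component contributing a rank-two parabolic subgroup isomorphic to $\Z^2$ and an annular component contributing a rank-one parabolic subgroup isomorphic to $\Z$. Incompressibility of $P$ ensures these peripheral subgroups inject into $G$, while condition~(4) of Definition~\ref{defn:Pared} guarantees there are no further noncyclic parabolic subgroups. Hence the conjugacy classes of maximal parabolic subgroups correspond bijectively to the components $P_0$ of $P$, each being conjugate to the image of $\pi_1(P_0) \to \pi_1(M)$, and $G$ is relatively hyperbolic with respect to the fundamental groups of the components of the parabolic locus. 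No step presents a genuine obstacle; the only subtlety is making this cusp/parabolic-locus dictionary precise.
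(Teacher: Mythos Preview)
Your proposal is correct and follows exactly the route the paper intends: the paper does not write out a proof at all but simply presents the corollary as an immediate consequence of Thurston's Hyperbolization Theorem together with Bowditch's work relating geometrical finiteness to relative hyperbolicity, citing \cite{Bowditch93,BowditchRelHyp}. Your write-up just makes explicit the two steps (hyperbolize, then invoke Bowditch) and the identification of the maximal parabolic subgroups with the $\pi_1(P_0)$, which is precisely the content behind the paper's one-line attribution.
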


In the next result, we examine assembling several pared $3$--manifolds by gluing them along subsurfaces of their boundaries to form a graph of spaces in the sense of Scott--Wall \cite{ScottWall79}.
The idea of pared manifolds has been used extensively in Kleinian groups, and although we did not find the following in the literature, we expect that it was known previously. 

\begin{prop}[Pared Combination]
\label{prop:ParedCombination}
Let $M$ be the total space of a graph of spaces  such that each vertex space is a pared $3$--manifold $(M_v,P_v)$ and each edge space is an annulus.  For each vertex $v$, suppose the adjacent edge spaces map homeomorphically onto a collection of disjoint incompressible annuli with union $Q_v$ in $\boundary M_v \setminus P_v$ such that $(M_v,P_v \cup Q_v)$ is also pared.
Then $(M,P)$ is again a compact atoroidal pared $3$--manifold, where $P = \bigcup_v P_v$.
\end{prop}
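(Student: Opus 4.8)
The plan is to realize $M$ concretely, reduce everything to the vertex pieces by cutting along the gluing surface, and then verify the five conditions of Definition~\ref{defn:Pared} together with atoroidality. Absorbing each product edge space into a collar, we may regard $M$ as the result of gluing the vertex manifolds $M_v$ along the annuli $Q_v$ according to the edges; let $\Sigma \subset M$ be the union of the resulting properly embedded annuli, one per edge. The surface $\Sigma$ is two-sided, and since it is assembled from the incompressible annuli $Q_v$ it is $\pi_1$--injective in $M$; cutting $M$ along $\Sigma$ returns $\bigsqcup_v M_v$. From this, $M$ is a compact orientable $3$--manifold (orient each $M_v$ and glue compatibly), and by van Kampen in the graph-of-spaces form of Scott--Wall, $\pi_1(M)$ is the fundamental group of the graph of groups with vertex groups $\pi_1(M_v)$ and infinite cyclic edge groups, all injecting; in particular $\pi_1(M)$ is torsion-free. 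Irreducibility is the standard innermost-disk argument: any embedded sphere can be isotoped off the incompressible $\Sigma$, using irreducibility of the pieces to fill in compressions, after which it lies in some irreducible $M_v$ and bounds a ball.

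Conditions (1)--(3) on the parabolic locus are then quick. Since the gluing takes place along $Q_v$, which is disjoint from $P_v$, each $P_v$ survives intact in $\partial M$, so $P=\bigcup_v P_v$ is a disjoint union of tori and annuli; each is incompressible in $M$ because the inclusion $\pi_1(M_v)\to\pi_1(M)$ is injective. For condition~(3), two components of $P$ lying in a single $M_v$ are non-isotopic in $\partial M_v$ because $(M_v,P_v\cup Q_v)$ is pared; an isotopy in the larger surface $\partial M$ would cross the seam circles $\partial\Sigma$, and after putting its trace in minimal position with respect to $\partial\Sigma$ one cuts it into arcs confined to the $\partial M_v$, reducing the question to the per-vertex statement. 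This non-isotopy clause of the enlarged pared structure is exactly what prevents two parabolic components from becoming parallel across the gluing.

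The substance is condition~(5) and atoroidality, which I would prove together by a cut-and-paste argument along $\Sigma$. Let $F$ be either an essential cylinder with $\partial F\subset P$ or an essential torus in $M$, and isotope $F$ into minimal transverse position with respect to $\Sigma$. Because $\partial F\subset P$ is disjoint from $\partial\Sigma$, the intersection $F\cap\Sigma$ consists of circles only; minimality together with the two-sided incompressibility of $F$ and of $\Sigma$ forces every such circle to be essential in both surfaces, hence isotopic to a core of an annulus of $\Sigma$ and to a core curve of $F$. These parallel circles cut $F$ into sub-annuli, each properly embedded in some $M_v$ with boundary on $P_v\cup Q_v$, that is, a cylinder in the pared pair $(M_v,P_v\cup Q_v)$. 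Since that pair is pared it has no essential cylinder, so each sub-annulus either compresses---contradicting the incompressibility and minimality of $F$---or is boundary-parallel, in which case the parallelism gives an isotopy of $F$ strictly reducing $\lvert F\cap\Sigma\rvert$, again contradicting minimality. Hence $F\cap\Sigma=\emptyset$ and $F\subset M_v$ for a single $v$; but $(M_v,P_v)$ is pared, so it contains no essential cylinder, and it is atoroidal because condition~(4) forces every $\Z^2$ to be conjugate into the $\pi_1$ of a torus component of $P_v$ and hence boundary-parallel. This contradiction establishes both~(5) and atoroidality of $(M,P)$. Finally, condition~(4) for $M$ follows once atoroidality is known: a noncyclic abelian subgroup is a copy of $\Z^2$ (as $\pi_1(M)$ is torsion-free), and acting on the Bass--Serre tree of the splitting it cannot lie in a cyclic edge group, so it either fixes a vertex---whence it conjugates into some $\pi_1(M_v)$ and is peripheral by condition~(4) for the piece---or stabilizes a line, which would manufacture an essential torus, contradicting atoroidality.

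The main obstacle is the minimal-position step in the preceding paragraph, specifically ruling out boundary-parallel sub-annuli: one must show that a boundary-parallel sub-cylinder of $F$ can be removed by an ambient isotopy that genuinely decreases $\lvert F\cap\Sigma\rvert$ rather than merely trading one intersection pattern for another. This is precisely where the hypothesis that the \emph{enlarged} pair $(M_v,P_v\cup Q_v)$---and not just $(M_v,P_v)$---is pared does the work, since the sub-cylinders meet $\partial M_v$ along $Q_v$; the same enlarged structure underlies the non-isotopy check for condition~(3).
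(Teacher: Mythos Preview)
Your approach is sound and essentially correct, but it differs substantially from the paper's. The paper bypasses all the cut-and-paste by invoking Dahmani's combination theorem: since each $\pi_1(M_v)$ is hyperbolic relative to the fundamental groups of components of $P_v\cup Q_v$ and the edge groups are full parabolic subgroups on each side, the combination theorem gives that $\pi_1(M)$ is hyperbolic relative to the fundamental groups of components of $P$. Conditions~(4), (3), and~(5) then drop out of general facts about relatively hyperbolic pairs---every $\Z^2$ subgroup is conjugate into a peripheral (citing Tukia), and the peripheral family is malnormal, which simultaneously forbids isotopic parabolic components and essential cylinders $(A,\partial A)\hookrightarrow(M,P)$. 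Irreducibility and incompressibility of $P$ are handled in one line each. The payoff is a much shorter argument that also establishes relative hyperbolicity of $\pi_1(M)$ as a byproduct.

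Your route through minimal-position cut-and-paste is the classical $3$--manifold argument and is more self-contained, at the cost of the bookkeeping you correctly flag as the main obstacle. One step could use a word more: in your treatment of condition~(4), a $\Z^2$ stabilizing a line in the Bass--Serre tree yields only an \emph{immersed} $\pi_1$--injective torus, so invoking the geometric atoroidality you just proved tacitly uses the torus theorem (or an equivalent device) to promote it to an embedded one. Alternatively, looking at a single vertex $v$ on the line, the two adjacent edge groups share a nontrivial element, so two distinct components of $Q_v$ have cores with a common conjugate power in $\pi_1(M_v)$; the annulus theorem then produces an embedded cylinder in $(M_v,P_v\cup Q_v)$ that is either essential (violating condition~(5)) or boundary-parallel (violating condition~(3)). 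Either fix is routine, and your identification of where the enlarged pared hypothesis does the real work is on target.
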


\begin{proof}
By hypothesis, the submanifold $Q_v$ of $M_v$ is a disjoint union of finitely many incompressible annuli in $\boundary M_v$, and the inclusion of each component of $Q_v$ into $M_v$ is $\pi_1$--injective.
Since $\pi_1(M)$ is assembled as the fundamental group of a graph of groups, each inclusion $P_v \to M_v \to M$ is $\pi_1$--injective.  Thus, $P$ is incompressible in $M$.
Since $M$ is formed by gluing finitely many compact irreducible $3$--manifolds $M_v$ along incompressible subsurfaces of their boundaries, $M$ is itself irreducible.

Since each group $\pi_1(G_v)$ is hyperbolic relative to the fundamental groups of the components of $P_v$, Dahmani's combination theorem \cite{Dahmani03Combination} gives that $G$ is hyperbolic relative to the fundamental groups of components of $P$.
In a relatively hyperbolic group, any subgroup that is not virtually cyclic and has no nonabelian free subgroup must be parabolic (\cite[Thm.~2U]{Tukia94}). Thus, every $\Z^2$ subgroup of $G$ is conjugate to a subgroup of the fundamental group of a component of $P$. Furthermore, the fundamental groups of components of $P$ are a malnormal family in $\pi_1(M)$. Thus, no two components of $P$ are isotopic, and $(M,P)$ has no essential cylinder $(A,\boundary A) \hookrightarrow (M,P)$.
\end{proof}

We occasionally use Marden's Isomorphism Theorem when studying topological conjugation. The version stated here is part of \cite[Thm.~3.8]{Tukia85}. (Note that Mostow--Prasad rigidity is the special case when $\Omega(\Gamma)$ is empty.)

\begin{thm}[Marden's Isomorphism Theorem]
\label{thm:MardenIsomorphism}
Let $\Gamma$ and $\Gamma'$ be geometrically finite Kleinian groups, and let $\phi\colon \Gamma\to \Gamma'$ be an isomorphism of groups.
Any $\phi$--equivariant homeomorphism $f\colon \Omega(\Gamma) \to \Omega(\Gamma')$ extends uniquely to a $\phi$--equivariant homeomorphism $\bar{f}\colon \widehat{\C} \to \widehat{\C}$. Furthermore:
   \begin{enumerate}
   \item
   \label{item:MardenIdentity}
   If $\phi\colon \Gamma \to \Gamma$ is the identity, then $\bar{f}\big| \Lambda(\Gamma)$ equals the identity.
   \item
   \label{item:MardenConformal}
   If $f$ is conformal then $\bar{f}$ is also conformal.
   \end{enumerate}
\end{thm}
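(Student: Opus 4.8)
The plan is to construct $\bar f$ directly on the limit set and then verify continuity there. I would begin with reductions. If $\Gamma$ is elementary the statement is immediate, and if $\Omega(\Gamma)=\emptyset$ then $\Lambda(\Gamma)=\widehat{\C}$, there is no boundary data to extend, and both existence and uniqueness of $\bar f$ reduce to Mostow--Prasad rigidity (as the parenthetical remark records). So I reduce to the nonelementary case with $\Omega(\Gamma)\neq\emptyset$. In this case $\Lambda(\Gamma)$ is nowhere dense and equals the frontier $\partial\Omega(\Gamma)$, so $\overline{\Omega(\Gamma)}=\widehat{\C}$. Consequently any continuous extension of $f$ to $\widehat{\C}$ is unique, which settles the uniqueness clause at once, and $\phi$--equivariance of any extension is automatic on the dense set $\Omega(\Gamma)$ and passes to the closure by continuity.

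For existence I would first show that $\phi$ preserves the geometric type of elements, matching loxodromics with loxodromics and parabolics with parabolics, and identifying their fixed points. It is essential here to use the homeomorphism $f$ and not merely the abstract isomorphism $\phi$, since abstract isomorphisms of geometrically finite groups need not respect parabolicity. The invariant I would exploit is the dynamics of $\gen{\gamma}$ on $\Omega$: for a loxodromic $\gamma$ and any $x\in\Omega(\Gamma)$ the forward and backward orbits $\gamma^{n}x$ and $\gamma^{-n}x$ escape every compact subset of $\Omega(\Gamma)$ toward two distinct limit points, whereas for a parabolic they escape toward a single point. Transporting this picture through $f$ and invoking the convergence property of $\Gamma'$ forces $\phi(\gamma)$ to have the matching type and pins down the limit of $\phi(\gamma)^{n}f(x)$ as the attracting fixed point of $\phi(\gamma)$. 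This lets me define $\bar f$ on the dense subset of loxodromic fixed points by $\bar f\bigl(\mathrm{fix}^{\pm}(\gamma)\bigr)=\mathrm{fix}^{\pm}\bigl(\phi(\gamma)\bigr)$ in a way consistent with $f$.

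The heart of the proof, and where I expect the main difficulty, is extending $\bar f$ continuously across all of $\Lambda(\Gamma)$; here the geometrical finiteness hypothesis enters, via the dichotomy that every limit point is conical or bounded parabolic. At a conical limit point $\zeta$ I would choose a sequence $g_i\in\Gamma$ realizing the conical dynamics, write points of $\Omega$ approaching $\zeta$ as $g_i$--images of points lying in a fixed compact ``shadow,'' and use $\phi$--equivariance together with the convergence dynamics of $\Gamma'$ to show that the corresponding $f$--images converge to a single point, yielding a well-defined continuous value $\bar f(\zeta)$. At a bounded parabolic point $\eta$ with maximal parabolic stabilizer $P=\Stab_{\Gamma}(\eta)$, I would use that $P$ acts cocompactly on $\Lambda(\Gamma)\setminus\{\eta\}$ to reduce continuity at $\eta$ to the behavior on finitely many $P$--orbit representatives, controlled once more by equivariance together with the fact that $\phi(P)$ is the parabolic stabilizer of $\eta'=\bar f(\eta)$. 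The rank--one cusps, where $P$ acts cocompactly only in the horospherical direction, are the technically delicate case that I expect to absorb most of the effort.

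Finally I would show $\bar f$ is a homeomorphism and dispatch the two supplementary claims. Running the entire construction with $f^{-1}$ and $\phi^{-1}$ produces a continuous equivariant extension of $f^{-1}$, and by the uniqueness already established this is the inverse of $\bar f$, so $\bar f$ is a homeomorphism of $\widehat{\C}$. For part~\eqref{item:MardenIdentity}, if $\phi$ is the identity then $\phi$ fixes every loxodromic element and hence each of its fixed points; since these are dense in $\Lambda(\Gamma)$ and $\bar f|\Lambda(\Gamma)$ is continuous, $\bar f$ restricts to the identity on $\Lambda(\Gamma)$. For part~\eqref{item:MardenConformal}, if $f$ is conformal then $\bar f$ is a homeomorphism of $\widehat{\C}$ that is conformal off the nowhere-dense limit set; since a geometrically finite group with nonempty ordinary set has limit set of Lebesgue measure zero, a standard removability argument shows $\bar f$ is $1$--quasiconformal on $\widehat{\C}$ and therefore a M\"obius transformation, in particular conformal.
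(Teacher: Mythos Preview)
The paper does not prove this theorem. It is stated as a known result with the attribution ``See \cite[Thm.~4.2]{Tukia85} for the version stated here,'' together with the parenthetical remark that Mostow--Prasad rigidity is the special case $\Omega(\Gamma)=\emptyset$. There is therefore no proof in the paper to compare your proposal against.

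Your sketch is a plausible outline of how one might prove Marden's theorem from scratch, and the ingredients you identify (density of $\Omega$ for uniqueness, type-preservation via dynamics transported through $f$, the conical/bounded-parabolic dichotomy for continuity, measure-zero removability for conformality) are indeed the standard ones. But since the paper treats this as a black box, your effort here is misplaced: the appropriate ``proof'' in context is simply the citation.
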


\begin{cor}
\label{cor:GeomFiniteManifolds}
Let $(M,P)$ be a pared $3$--manifold with two complete geometrically finite hyperbolic structures.
Let $\Gamma$ and $\Gamma'$ be the corresponding Kleinian representations of $\pi_1(M)$.
Then the actions of $\Gamma$ and $\Gamma'$ on $\widehat{\C}$ are topologically conjugate.
\end{cor}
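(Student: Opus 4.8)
The plan is to reduce the statement to Marden's Isomorphism Theorem (Theorem~\ref{thm:MardenIsomorphism}). Write $\rho\colon \pi_1(M)\to\Gamma$ and $\rho'\colon \pi_1(M)\to\Gamma'$ for the discrete faithful representations determined by the two geometrically finite structures, so that each is an isomorphism onto its image, and set $\phi=\rho'\circ\rho^{-1}\colon \Gamma\to\Gamma'$. Since $M$ is irreducible with nonempty boundary it is aspherical, so $\pi_1(M)$ and hence each of $\Gamma,\Gamma'$ is torsion free; in particular these groups act freely and properly discontinuously on $\Hyp^3$ and on their domains of discontinuity. The goal is then to produce a single $\phi$--equivariant homeomorphism $f\colon\Omega(\Gamma)\to\Omega(\Gamma')$: once this is in hand, Theorem~\ref{thm:MardenIsomorphism} extends $f$ to a $\phi$--equivariant self-homeomorphism $\bar{f}$ of $\widehat{\C}$, and $\phi$--equivariance of $\bar{f}$ says exactly that $\bar{f}$ conjugates the $\Gamma$--action to the $\Gamma'$--action.

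To construct $f$ I would work with the bordified quotients. For the first structure set $N_\Gamma=\bigl(\Hyp^3\cup\Omega(\Gamma)\bigr)/\Gamma$, a compact manifold whose interior is $\Hyp^3/\Gamma$ and whose boundary is the conformal boundary $\Omega(\Gamma)/\Gamma$. Because the structure is geometrically finite with parabolic locus $P$, its ends are standard and $N_\Gamma$ is homeomorphic to $M$ by a homeomorphism that carries $\boundary M\setminus P$ to the conformal boundary, carries $P$ to the rank-one and rank-two cusps, and induces $\rho$ on fundamental groups. Write $\Psi\colon M\to N_\Gamma$ for such a homeomorphism and $\Psi'\colon M\to N_{\Gamma'}$ for its analogue. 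Then $\Psi'\circ\Psi^{-1}\colon N_\Gamma\to N_{\Gamma'}$ is a homeomorphism inducing $\rho'\circ\rho^{-1}=\phi$ on fundamental groups, where any residual inner ambiguity from basepoints is absorbed by an element of $\Gamma'$ acting as a M\"obius conjugation and is therefore harmless for the conclusion.

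Now $\Hyp^3\cup\Omega(\Gamma)$ is contractible (it deformation retracts onto $\Hyp^3$) and the torsion-free group $\Gamma$ acts on it freely and properly discontinuously, so it is the universal cover of $N_\Gamma$ with deck group $\Gamma$; the same holds with $\Gamma'$. Since $\Psi'\circ\Psi^{-1}$ induces $\phi$ on $\pi_1$, it lifts to a $\phi$--equivariant homeomorphism $\widetilde{F}\colon \Hyp^3\cup\Omega(\Gamma)\to\Hyp^3\cup\Omega(\Gamma')$ of universal covers. This $\widetilde{F}$ carries interior to interior and boundary to boundary, so restricting it to the boundary $\Omega(\Gamma)$ of this manifold-with-boundary yields the desired $\phi$--equivariant homeomorphism $f=\widetilde{F}|_{\Omega(\Gamma)}$. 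Applying Theorem~\ref{thm:MardenIsomorphism} to $f$ then produces the conjugacy $\bar{f}\colon\widehat{\C}\to\widehat{\C}$. In the degenerate case $\boundary M\setminus P=\emptyset$ one has $\Omega(\Gamma)=\emptyset$, the construction of $f$ is vacuous, and Theorem~\ref{thm:MardenIsomorphism} reduces to Mostow--Prasad rigidity, which supplies $\bar{f}$ directly.

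I expect the main obstacle to be the second step: verifying that the geometrically finite structure identifies the bordified quotient $N_\Gamma$ with the pared manifold $M$ in a way that preserves the $\pi_1$--marking and sends $\boundary M\setminus P$ to the conformal boundary and $P$ to the cusps. This rests on the structure theory of geometrically finite ends together with the fact, via $3$--manifold topology, that the marked homeomorphism type of the conformal boundary is determined by the pared manifold; and it is here that one must ensure $\Psi$ and $\Psi'$ induce $\rho$ and $\rho'$ compatibly, so that the composite $\Psi'\circ\Psi^{-1}$ descends to $\phi$ and the lift $\widetilde{F}$ is genuinely $\phi$--equivariant. Once this identification is established, the remaining covering-space lift and the invocation of Marden's theorem are routine.
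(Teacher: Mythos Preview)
Your proposal is correct and follows essentially the same approach as the paper: identify the bordified quotients $\bigl(\Hyp^3\cup\Omega(\Gamma)\bigr)/\Gamma$ and $\bigl(\Hyp^3\cup\Omega(\Gamma')\bigr)/\Gamma'$ with $M$ and hence with each other, lift the resulting homeomorphism to the universal covers to obtain a $\phi$--equivariant homeomorphism $\Omega(\Gamma)\to\Omega(\Gamma')$, and then invoke Theorem~\ref{thm:MardenIsomorphism}. The paper's proof is more terse, but the content is the same; you have simply spelled out the identification of the bordified quotient with $M$ and the covering-space lift that the paper leaves implicit.
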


\begin{proof}
The $3$--manifolds $\Hyp^3 \cup \Omega(\Gamma) / \Gamma$ and $\Hyp^3 \cup \Omega(\Gamma') / \Gamma'$ are homeomorphic.  Let $\phi\colon \Gamma\to\Gamma'$ be the isomorphism induced by such a homeomorphism.
Lifting to the universal cover gives a $\phi$--equivariant homeomorphism $\Omega(\Gamma) \to \Omega(\Gamma')$. 
The result now follows by Theorem~\ref{thm:MardenIsomorphism}.
\end{proof}

Mapping classes of finite type surfaces naturally act on the circle at infinity of the hyperbolic plane. The compact case of this result is well-known; see \cite{Calegari_Foliations} or \cite{FarbMargalit_Primer}. 
For the general case, see Cantwell--Conlon \cite{CantwellConlon15}.
(The first two assertions also follow from the more general Theorem~\ref{thm:MardenIsomorphism}.)

\begin{thm}
\label{thm:CircleAction}
Let $X$ be a finite area, complete, connected hyperbolic surface.
   \begin{enumerate}
   \item If $f$ is a homeomorphism of $X$, then any lift $\tilde{f}$ to a homeomorphism of\/ $\Hyp^2$ extends uniquely to a homeomorphism $\hat{f}$ of $D^2=\Hyp^2 \cup S^1$.
   \item Suppose homeomorphisms $f$ and $g$ of $X$ are homotopic.  For each lift $\tilde{f}$ of $f$, let $\tilde{g}$ be the lift of $g$ homotopic to $\tilde{f}$ by a lift of the given homotopy. Then the restrictions of $\hat{f}$ and $\hat{g}$ to $S^1$ agree.
   \item If $\hat{f}$ equals the identity on $S^1$, then $f$ is isotopic to the identity.
   \end{enumerate}
\end{thm}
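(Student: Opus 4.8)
The plan is to fix a Fuchsian uniformization $X = \Hyp^2/\Gamma$, where $\Gamma$ is a torsion-free lattice acting on $S^1 = \partial\Hyp^2$ with full limit set. Given a homeomorphism $f$ of $X$ and a lift $\tilde f$ of $f$ to $\Hyp^2$, the assignment $\gamma \mapsto \tilde f \gamma \tilde f^{-1}$ lands in the deck group and defines an automorphism $\phi$ of $\Gamma$, so that $\tilde f$ is $\phi$--equivariant. Because $f$ permutes the cusps of $X$, the automorphism $\phi$ is type--preserving: it carries parabolic elements to parabolic elements, and hence (using that $\Gamma$ is torsion--free) hyperbolic elements to hyperbolic elements. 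This equivariance is the structure I would use to build, and then control, the boundary extension.

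For statement~(1), I would prescribe the extension on the dense set $D \subset S^1$ of fixed points of hyperbolic elements by $\hat f(\gamma^{\pm}) = \phi(\gamma)^{\pm}$. This is not an arbitrary choice: by equivariance $\tilde f(\mathrm{axis}\,\gamma)$ is a properly embedded line invariant under $\phi(\gamma)$ on which $\phi(\gamma)$ acts coboundedly, and iterating a compact fundamental segment under the hyperbolic isometry $\phi(\gamma)$ shows that its two ideal endpoints are exactly $\phi(\gamma)^{\pm}$, so the prescription is forced by $\tilde f$ itself. The essential point is then that $\hat f|_D$ preserves the linking of endpoint pairs: two axes cross in $\Hyp^2$ if and only if their images under the homeomorphism $\tilde f$ cross, and two properly embedded lines in the disc with endpoints on $S^1$ cross exactly when their endpoints alternate; applying this to $\tilde f^{-1}$ (which is $\phi^{-1}$--equivariant) gives the reverse implication, so $\gamma_1^{\pm}$ and $\gamma_2^{\pm}$ alternate if and only if $\phi(\gamma_1)^{\pm}$ and $\phi(\gamma_2)^{\pm}$ do. Thus $\hat f|_D$ respects the cyclic order and extends uniquely to a homeomorphism of $S^1$; gluing this to $\tilde f$ yields a bijection $\hat f$ of $D^2$ (unique, since continuous extensions over the dense subset $\Hyp^2 \subset D^2$ are unique), and continuity across $S^1$ follows by trapping a point that approaches $\xi \in S^1$ between the images of two nearby axes whose endpoints are already controlled. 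I expect this monotonicity--and--continuity step, together with the bookkeeping at the parabolic (cusp) directions, to be the main obstacle, since outside a compact core $\tilde f$ need not be a quasi--isometry and the usual Morse--lemma extension of quasi--isometries is unavailable.

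For statement~(2), the homotopy from $f$ to $g$ lifts to a homotopy $\tilde H_t$ from $\tilde f$ to $\tilde g$, and equivariance gives $\tilde H_t \circ \gamma = \phi_t(\gamma)\circ \tilde H_t$ with $\phi_t(\gamma)$ in the discrete group $\Gamma$ depending continuously on $t$; hence $\phi_t$ is constant and $\tilde g$ is $\phi$--equivariant for the same $\phi$ as $\tilde f$. Consequently $\hat g$ and $\hat f$ agree on the dense set $D$ by the formula above, and since both are homeomorphisms of $S^1$ they agree everywhere on $S^1$.

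For statement~(3), suppose $\hat f$ is the identity on $S^1$. Then $\phi$ fixes both endpoints of every hyperbolic axis; since in a discrete group two hyperbolics share an axis only when they are powers of a common primitive element, and since $\hat f = \mathrm{id}$ is orientation--preserving and fixes attracting and repelling points separately, $\phi$ must fix every primitive hyperbolic element and hence restricts to the identity on the subgroup they generate, which has full limit set. This forces the outer automorphism induced by $f$ to be trivial, which is precisely the faithfulness of the boundary action underlying the cited results of Calegari and Cantwell--Conlon. Finally, a homeomorphism of a finite--type surface inducing the trivial outer automorphism is homotopic to the identity, and homotopic surface homeomorphisms are isotopic, so $f$ is isotopic to the identity.
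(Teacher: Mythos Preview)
The paper does not prove this theorem; it records the statement and cites Calegari for the compact case and Cantwell--Conlon for the finite-area case.  So there is no in-paper argument to compare against, and I evaluate your sketch on its own terms.  Parts~(2) and~(3) are essentially fine.  For~(3) the argument can be shortened: from $\hat f|_{S^1}=\mathrm{id}$ and the equivariance $\hat f\circ\gamma=\phi(\gamma)\circ\hat f$ on $D^2$ you get $\phi(\gamma)|_{S^1}=\gamma|_{S^1}$ for every $\gamma\in\Gamma$, and since a M\"obius transformation is determined by its action on three boundary points this gives $\phi=\mathrm{id}$ directly, with no need to single out primitive hyperbolic elements or the subgroup they generate.

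The genuine gap is in part~(1), exactly where you flag it.  Your trapping argument says that if $x_n\to\xi$ in $D^2$ then eventually $x_n$ lies in the half-disc $H$ cut off by a hyperbolic axis $\alpha$ whose endpoints straddle $\xi$ closely, so $\tilde f(x_n)$ lies in the topological half-disc cut off by the properly embedded line $\tilde f(\alpha)$, whose ideal endpoints $\phi(\gamma)^{\pm}$ are close to $\hat f(\xi)$ by the $S^1$-continuity you have already established.  But nothing controls the \emph{diameter} of that image half-disc: the line $\tilde f(\alpha)$ has the correct endpoints, yet it may make an arbitrarily deep excursion into $\Hyp^2$ before returning, so the region it bounds need not lie in a small neighborhood of $\hat f(\xi)$.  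In the compact case this is precisely what the Morse lemma rules out, since $\tilde f$ is then a quasi-isometry and $\tilde f(\alpha)$ a quasigeodesic; in the cusped case that estimate is unavailable and one needs a genuine substitute, typically obtained by first isotoping $f$ to respect a thick--thin decomposition so that $\tilde f$ is a quasi-isometry over the compact core and has explicitly controlled, horoball-preserving behavior over the cusps.  Supplying that substitute is the actual content of the Cantwell--Conlon reference; your sketch correctly names the obstacle but does not overcome it, so as written part~(1) is a plan rather than a proof.
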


If a finite group $G$ of mapping classes of a finite type surface fixes a point of Teichm\"{u}ller space, then $G$ is isomorphic to a group of isometries of the corresponding hyperbolic metric (see, for instance, \cite[\S 12.1]{FarbMargalit_Primer}).
The following variation of this fact deals with a finite group of homeomorphisms, rather than mapping classes.

\begin{prop}
\label{prop:Isotopic}
Let $X$ be a finite area complete, connected hyperbolic surface. Let $G$ be a finite group of homeomorphisms of $X$ such that each element is homotopic to an isometry of $X$.
Then $G$ is topologically conjugate to a group of isometries of $X$ by a homeomorphism isotopic to the identity.
\end{prop}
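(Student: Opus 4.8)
The plan is to realize the conjugacy on the universal cover $\Hyp^2$ and on its circle at infinity, using Theorem~\ref{thm:CircleAction} to control the interior and the boundary simultaneously. Write $X = \Hyp^2/\Gamma$ for the finite-covolume Fuchsian group $\Gamma = \pi_1(X)$. Since the isometry group of a finite-area hyperbolic surface is finite and injects into the mapping class group $\mathrm{MCG}(X)$ (an isometry isotopic to the identity is the identity), each nontrivial $g \in G$ is isotopic to a \emph{unique} isometry $\iota_g$, and $g \mapsto \iota_g$ is a homomorphism $\rho\colon G \to \Isom(X)$; here the hypothesis is understood so that no nontrivial element is isotopic to the identity, so $\rho$ is injective with image $\bar G$. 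It suffices to produce a homeomorphism $h$ of $X$, isotopic to the identity, with $h g h^{-1} = \iota_g$ for every $g$.

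\emph{Matching boundary actions.} Let $\tilde G$ be the group of all lifts to $\Hyp^2$ of elements of $G$, fitting into $1 \to \Gamma \to \tilde G \to G \to 1$. By Theorem~\ref{thm:CircleAction}(1) every element of $\tilde G$ extends to a homeomorphism of $D^2 = \Hyp^2 \cup S^1$, giving an action of $\tilde G$ on $D^2$. Let $\Gamma' \le \Isom(\Hyp^2)$ be generated by $\Gamma$ together with lifts of the isometries $\iota_g$; since each $\iota_g$ normalizes $\Gamma$, the group $\Gamma'$ lies in the discrete normalizer of the lattice $\Gamma$ and is itself a lattice with $\Gamma \lhd \Gamma'$ and $\Gamma'/\Gamma \cong \bar G$. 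For the lift of $g$ homotopic to the chosen lift of $\iota_g$, Theorem~\ref{thm:CircleAction}(2) shows its boundary values on $S^1$ coincide with those of the corresponding M\"obius transformation. Hence the boundary homomorphism $\tilde G \to \Homeo(S^1)$ has image $\Gamma'|_{S^1}$; any element of its kernel covers an element of $G$ isotopic to the identity (by Theorem~\ref{thm:CircleAction}(3)), hence lies in $\Gamma$ and acts trivially on $S^1$, so is trivial. This yields an isomorphism $\psi\colon \tilde G \to \Gamma'$ restricting to the identity on $\Gamma$, with $\hat\sigma|_{S^1} = \psi(\sigma)|_{S^1}$ for all $\sigma \in \tilde G$.

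\emph{Constructing the conjugacy.} The heart of the argument is to produce a $\tilde G$--equivariant homeomorphism $\tilde h\colon D^2 \to D^2$, restricting to the identity on $S^1$, conjugating the topological action of $\tilde G$ to the geometric action of $\Gamma'$ via $\psi$. Both actions are properly discontinuous on $\Hyp^2$, agree on $S^1$, and have finite-covolume quotient orbifolds with isomorphic orbifold fundamental groups; at each elliptic fixed point the local model is a finite cyclic group acting on a disc, which is topologically conjugate to a group of rotations (a disc version of \Kerekjarto's theorem, Theorem~\ref{thm:Kerekjarto}) and so matches the rotational model of $\Gamma'$. Building $\tilde h$ equivariantly over the contractible $\Hyp^2$, using topological rigidity of the aspherical quotient orbifolds, and extending by the identity on $S^1$---continuously, since the two actions have the same boundary values---gives the required map. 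Alternatively, one verifies as in Proposition~\ref{prop:LiftingConvergence} that $\tilde G$ acts on $D^2$ as a convergence group, invokes Theorem~\ref{thm:MartinTukia}, and pins down the conjugacy using that a Fuchsian group is determined by its action on $S^1$. I expect this equivariant extension---simultaneously controlling the interior identification, the behavior at fixed points, and the boundary values---to be the main obstacle.

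\emph{Descent.} Since $\psi$ is the identity on $\Gamma$, the map $\tilde h$ is $\Gamma$--equivariant and descends to a homeomorphism $h$ of $X = \Hyp^2/\Gamma$ with $h g h^{-1} = \iota_g$ for all $g$, so $h$ conjugates $G$ to the isometry group $\bar G$. Finally $\tilde h|_{S^1} = \mathrm{id}$, so by Theorem~\ref{thm:CircleAction}(3) the homeomorphism $h$ is isotopic to the identity, completing the proof.
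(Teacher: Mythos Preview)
Your outline is essentially the paper's proof: lift both $G$ and the target isometry group to $\Hyp^2 \cup S^1$, match their actions on $S^1$ via Theorem~\ref{thm:CircleAction}, conjugate on $D^2$, and descend. The only substantive difference is at the step you flag as the main obstacle. The paper dispatches it in one line by citing \cite[Cor.~4.5]{MartinTukia88}, which states precisely that if two convergence group actions on $D^2$ agree on $S^1$ then they are topologically conjugate by a homeomorphism of $D^2$ restricting to the identity on $S^1$; this is exactly your alternative route~(b), with the ``pinning down'' already packaged by Martin--Tukia. (For the isomorphism $G \to \bar G$, the paper simply cites the argument of \cite[Thm.~5]{Kerckhoff_NielsenRealization} rather than arguing directly as you do.)
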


\begin{proof}
A nontrivial finite order homeomorphism of $X$ cannot be homotopic to the identity. Indeed, by Theorem~\ref{thm:CircleAction} if $f$ is homotopic to the identity and of order $n$, it has a lift $\tilde{f}$ such that $\hat{f}$ fixes $S^1$ pointwise. The map $\hat{f}^n$ is a deck transformation, hence, M\"{o}bius, fixing $S^1$ pointwise. Thus, $\hat{f}^n$ is the identity on $D^2$.  A finite-order homeomorphism $\hat{f}$ of $D^2$ fixing $S^1$ is the identity by a theorem of \Kerekjarto\ (see \cite[Cor.~4.3]{Kolev06}).
It follows that the natural map $G \to \text{Mod}^{\pm}(X)$ is injective.
Therefore, the function $\phi\colon G \to G'$ assigning to each member of $G$ the unique isometry of $X$ in its homotopy class is a group isomorphism (see \cite[\S 12.1]{FarbMargalit_Primer}).


Let $\tilde{G}$ and $\tilde{G}'$ be the groups of homeomorphisms of $\Hyp^2$ given by lifting elements of $G$ and $G'$.  
There is a natural isomorphism $\tilde{\phi}\colon\tilde{G} \to \tilde{G}'$ that equals the identity on their common finite index subgroup $H=\pi_1(X)$.
The topological actions of $\tilde{G}$ and $\tilde{G}'$ on $\Hyp^2$ extend to actions on the disc $D^2 = \Hyp^2 \cup S^1$ that agree on $S^1$ by Theorem~\ref{thm:CircleAction}. Since $\tilde{G}$ contains the convergence group $H$ with finite index, $\tilde{G}$ is also a convergence group.
It follows from Martin--Tukia \cite[Cor.~4.5]{MartinTukia88} that there is a homeomorphism $\hat{f}$ of $D^2$ conjugating $\tilde{G}$ to $\tilde{G}'$ such that $\hat{f}$ restricts to the identity on $S^1$.
In particular, $\hat{f}$ is $H$--equivariant, so it induces a homeomorphism $f$ of $X$ conjugating $G$ to $G'$.
By Theorem~\ref{thm:CircleAction}, the map $f$ is isotopic to the identity.
\end{proof}

The next theorem roughly states that if $G$ is virtually a geometrically finite Kleinian group and $G$ acts on $S^2$, then $G$ is itself Kleinian.
For emphasis, we remind the reader of our convention that a Kleinian group is a discrete subgroup of $\Isom(\Hyp^3)$, whose elements do not need to preserve orientation.  Similarly, a conformal map is not required to preserve orientation.

\begin{thm}[Virtually Kleinian]
\label{thm:virtuallyKleinian}
Suppose $G$ acts faithfully as a convergence group on $S^2$, and $G$ has a finite index subgroup $H$ whose action on $S^2$ is topologically conjugate to a geometrically finite Kleinian action on $\widehat{\C}$.
Then the action of $G$ on $S^2$ is itself topologically conjugate to a geometrically finite Kleinian action on $\widehat{\C}$.
\end{thm}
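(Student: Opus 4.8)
The plan is to promote the Kleinian structure on the finite-index subgroup $H$ to one on all of $G$ by first putting a $G$-invariant conformal structure on $S^2$ and then invoking Sullivan-style rigidity. By hypothesis, there is a homeomorphism $h\colon S^2 \to \widehat{\C}$ conjugating the action of $H$ to a geometrically finite Kleinian action $\Gamma = hHh^{-1} \le \Isom(\Hyp^3)$. Transporting the standard conformal structure on $\widehat{\C}$ back along $h$, we obtain a conformal structure $\mu_0$ on $S^2$ that is invariant under the $H$-action. The first step is to average this structure over the finite quotient $G/H$. Concretely, since $H$ is finite index and normal in a finite-index subgroup (pass to the normal core if necessary, which changes nothing about being virtually Kleinian), the finitely many cosets $g_1 H, \dots, g_n H$ act on the space of conformal structures, and one forms the barycenter (using the hyperbolic metric on the Teichm\"uller-type space of conformal structures, where averaging is well defined) to produce a bounded measurable conformal structure $\mu$ on $S^2$ that is now invariant under all of $G$.

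Next I would apply the measurable Riemann mapping theorem (the Ahlfors--Bers integrability theorem) to $\mu$: there is a quasiconformal homeomorphism $f\colon S^2 \to \widehat{\C}$ whose complex dilatation is $\mu$, so that $f$ straightens the $G$-invariant conformal structure to the standard one. Because $\mu$ is $G$-invariant, each conjugate $f g f^{-1}$ preserves the standard conformal structure on $\widehat{\C}$ and is therefore a M\"obius transformation, i.e.\ an element of $\Isom(\Hyp^3)$. Thus $f$ conjugates the $G$-action to an action by conformal automorphisms of $\widehat{\C}$; this realizes $G$ as a subgroup of $\Isom(\Hyp^3)$. Discreteness follows because $G$ acts as a convergence group on $S^2$ (convergence group actions are properly discontinuous on the space of distinct triples), and proper discontinuity is preserved by topological conjugacy; a nondiscrete subgroup of $\Isom(\Hyp^3)$ cannot act as a convergence group on its limit set. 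Hence $G$ is a Kleinian group.

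Finally, geometric finiteness of the resulting $G$-action is automatic from the topological characterization recalled in Section~\ref{sec:Convergence}: geometrical finiteness depends only on the topological conjugacy class of the convergence action on $S^2$, and a convergence group on $S^2$ containing a geometrically finite subgroup of finite index is itself geometrically finite (by \cite[Prop.~VI.E.6]{Maskit88}). Since $H$ is geometrically finite and $[G:H]<\infty$, the Kleinian action of $G$ we have produced is geometrically finite, completing the proof.

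I expect the main obstacle to be making the averaging step in the first paragraph genuinely rigorous. One must average conformal structures $G$-equivariantly, which requires either working in the correct convex (barycentric) coordinates on the space of measurable Beltrami coefficients, or bypassing averaging entirely by the following cleaner alternative: the finite group $G/H$ acts by homeomorphisms on the quotient orbifold and one appeals directly to the measurable solution of the Beltrami equation together with uniqueness to show that invariance of $\mu$ passes to conjugacy of the action. A subtle point is ensuring $\mu$ is a genuine bounded measurable conformal structure (dilatation bounded away from $1$) rather than merely a measurable field; this is where finiteness of the index is essential, as only finitely many quasiconformal distortions are composed. The discreteness argument and the geometric-finiteness conclusion, by contrast, are formal consequences of the convergence-group hypothesis and the cited topological characterization.
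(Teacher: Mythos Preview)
Your overall strategy---promote the Kleinian structure on $H$ to a $G$--invariant conformal structure, then straighten---is the right shape, but the averaging step has a genuine gap that your closing paragraph does not repair. To average the pullbacks $g_i^*\mu_0$ as Beltrami differentials (or to apply Ahlfors--Bers at all), you need the conjugated maps $hg_ih^{-1}\colon\widehat{\C}\to\widehat{\C}$ to be \emph{quasiconformal}. A priori these are only homeomorphisms: nothing in the hypotheses gives any regularity for elements of $G\setminus H$, and there is no reason their Beltrami coefficients should be essentially bounded (or even defined almost everywhere). Finiteness of the index controls the number of terms in the average but says nothing about the dilatation of each term. Your ``cleaner alternative'' has the same defect, since uniqueness in the measurable Riemann mapping theorem still requires a datum in the open unit ball of $L^\infty$.

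The paper sidesteps this by working only on the domain of discontinuity, where the obstruction disappears. After passing to a torsion-free, orientation-preserving, normal $H$, the finite group $G/H$ acts on the finite-type Riemann surface $\Omega(\Gamma_H)/\Gamma_H$ by homeomorphisms, hence on its Teichm\"uller space by mapping classes; Nielsen realization (Kerckhoff) then provides a fixed point. This is the correct substitute for your averaging, and it needs only the topological action, not quasiconformality. The fixed point yields a deformed Kleinian representation $\Gamma'_H$ for which the induced $G/H$--action on $\Omega(\Gamma'_H)/\Gamma'_H$ is isotopic to a conformal action; Proposition~\ref{prop:Isotopic} upgrades ``isotopic to conformal'' to ``genuinely conformal'' on $\Omega$. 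The extension across the limit set---precisely the place where your Beltrami argument breaks---is handled by Marden's Isomorphism Theorem (Theorem~\ref{thm:MardenIsomorphism}), which replaces quasiconformal analysis on $\Lambda$ entirely. Your discreteness and geometric-finiteness remarks at the end are correct and match the paper.
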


Note: This conclusion follows from Mostow--Prasad Rigidity in the case that the limit set equals $S^2$, but requires more work in the general case.

\begin{proof}
Any convergence group on $S^2$ topologically conjugate to $G$ or $H$ must again be geometrically finite (see Section~\ref{sec:Convergence}).
Replacing $H$ with a finite index subgroup, we may assume that $H$ is torsion free, orientation preserving, and normal in $G$.
If $H \to \Gamma_H$ is any geometrically finite Kleinian representation topologically conjugate to the action of $H$ on $S^2$, then the conformal boundary $\Omega(\Gamma_H)/\Gamma_H$ is a finite type Riemann surface, \emph{i.e.}, it may be obtained from a compact Riemann surface by removing a finite set of points.

We produce a new representation $\Gamma'_H$ of $H$ compatible with the action of $G$ as follows. The given map $S^2 \to \widehat{\C}$ inducing $H \to \Gamma_H$ topologically conjugates the action of $G$ on $S^2$ to an action by homeomorphisms on $\widehat{\C}$. The induced action of $G/H$ on the Teichm\"{u}ller space of the surface $\Omega(\Gamma_H)/\Gamma_H$ has a fixed point by Nielsen realization \cite{Kerckhoff_NielsenRealization}, which determines a Kleinian representation $H\to \Gamma'_H$ topologically conjugate to $\Gamma_H$ (see Marden \cite[Thm.~3.1]{Marden07_Deformations}). 
The homeomorphism $S^2 \to \widehat{\C}$ inducing $H \to \Gamma'_H$ gives rise to an action $\rho\colon G \to \Homeo(\widehat{\C})$ that extends the Kleinian action $\Gamma'_H$ and that descends to an action of $G/H$ by homeomorphisms on $\Omega(\Gamma'_H)/\Gamma'_H$ such that each homeomorphism is homotopic to a conformal map.

By Proposition~\ref{prop:Isotopic}, there exists a homeomorphism $f$ of $\Omega(\Gamma'_H)/\Gamma'_H$ isotopic to the identity that topologically conjugates the action of $G/H$ to an action by conformal maps. The map $f$ lifts to a $\Gamma'_H$--equivariant homeomorphism of $\Omega(\Gamma'_H)$, which extends to a homeomorphism $\hat{f}$ of $\widehat{\C}$ that equals the identity on $\Lambda(\Gamma'_H)$ by Theorem~\ref{thm:MardenIsomorphism}(\ref{item:MardenIdentity}).
The map $\hat{f}$ topologically conjugates $\rho$ to an action $\rho'\colon G \to \Homeo(\widehat{\C})$ that extends the Kleinian action $\Gamma'_H$ and whose restriction to $\Omega(\Gamma'_H)$ is conformal.
By Theorem~\ref{thm:MardenIsomorphism}(\ref{item:MardenConformal}), the map $\rho'$ must be conformal on $\widehat{\C}$; in other words, $\rho'$ is Kleinian.
\end{proof}

\begin{thm}[Pinching curves]
\label{thm:Ohshika}
Let $\rho\colon G \to \Gamma \le \Isom(\Hyp^3)$ be a geometrically finite representation.
Let $\{\alpha\}$ be a finite collection of disjoint, nonparallel embedded $1$--orbifolds in the conformal boundary $\Omega(\Gamma)/\Gamma$ represented by a family $P_\ell$ of maximal loxodromic subgroups of\/ $\Gamma$.
In the conformal boundary, let $\{N_\alpha\}$ be pairwise disjoint regular neighborhoods of the $\alpha$'s.

Let $S = \widehat{\C}/\!\!\sim$ be the quotient formed by collapsing to a point each elevation of each $N_\alpha$.
Then the induced action of $G$ on $S$ is topologically conjugate to a geometrically finite Kleinian representation $\rho'\colon G \to \Gamma'$ whose parabolic elements are those of\/ $\Gamma$ along with the conjugacy classes of members of $P_\ell$.
In particular, the Kleinian group $\Gamma'$ is covered by $\Gamma$.
\end{thm}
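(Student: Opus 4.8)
The plan is to build the target Kleinian group $\Gamma'$ by a pinching deformation realized through Thurston's hyperbolization, and then to match its action on $\widehat{\C}$ with the collapsed action on $S$ using Moore's theorem together with covering-space bookkeeping on the conformal boundaries. First I would realize $\Gamma$ as the holonomy of a geometrically finite structure on the interior of a pared manifold $(M,P)$, where $\partial M \setminus P = \Omega(\Gamma)/\Gamma$ and $P$ is the parabolic locus of $\Gamma$ (passing to a torsion-free finite-index subgroup and working with pared orbifolds to accommodate the $1$--orbifold curves, then descending via Theorem~\ref{thm:virtuallyKleinian}). The curves $\{\alpha\}$ lie on $\partial M \setminus P$, and I would take their disjoint annular regular neighborhoods to form $Q = \bigcup N_\alpha \subset \partial M \setminus P$. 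The heart of the construction is to check that $(M, P \cup Q)$ is again a pared $3$--manifold (Definition~\ref{defn:Pared}): incompressibility of the $N_\alpha$ and $\pi_1$--injectivity follow from the curves being represented by maximal loxodromic subgroups; non-isotopy of the components of $P\cup Q$ follows from the $\alpha$ being pairwise nonparallel and not parallel into $P$; and the absence of essential cylinders uses maximality of the members of $P_\ell$, so that each new curve is primitive and no annulus joins two components of the parabolic locus. Granting this, Theorem~\ref{thm:Hyperbolization} produces a geometrically finite hyperbolic structure on $(M, P\cup Q)$ and hence a Kleinian representation $\rho'\colon G \to \Gamma'$ whose parabolic locus is $P \cup Q$; by construction its parabolics are exactly the old parabolics of $\Gamma$ together with the conjugacy classes in $P_\ell$, as required.

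Next I would identify $S$ with $\widehat{\C}$. Each elevation of an $N_\alpha$ in $\Omega(\Gamma)$ is an invariant band whose closure in $\widehat{\C}$ is a disc meeting $\Lambda(\Gamma)$ only in the two fixed points of the corresponding loxodromic; since distinct maximal loxodromic subgroups have disjoint fixed-point pairs by discreteness, these closed discs are pairwise disjoint, and they form a null family of non-separating continua (compare Theorem~\ref{thm:PlanarPeano}\eqref{item:PeanoNull}). Moore's theorem (Theorem~\ref{thm:Moore}) then gives $S \cong S^2$, and collapsing each band makes the corresponding loxodromic lose its two distinct fixed points in favour of a single fixed point, which becomes a parabolic point of the induced $G$--action on $S$. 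The collapse map $c\colon \widehat{\C} \to S$ is a $G$--equivariant monotone surjection that restricts to a homeomorphism off the collapsed discs, so once the conjugacy $S \cong \widehat{\C}$ is in place the covering statement is immediate from Definition~\ref{def:covering}.

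To produce the conjugacy itself, note that the ordinary set of the $G$--action on $S$ is $c\bigl(\Omega(\Gamma)\setminus \tilde Q\bigr)$ and maps homeomorphically to $\Omega(\Gamma)\setminus\tilde Q$, the cover of $\Sigma := \partial M \setminus (P\cup Q)$ determined by $G$. On the other hand $\Omega(\Gamma')/\Gamma' = \Sigma$ as well, so $\Omega(\Gamma')$ is the cover of $\Sigma$ determined by the same peripheral data; comparing these covers component by component yields a $G$--equivariant homeomorphism between the ordinary set of the $S$--action and $\Omega(\Gamma')$. Finally I would extend this equivariant homeomorphism of domains of discontinuity across the limit set to a homeomorphism $S \to \widehat{\C}$ conjugating the two actions. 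This is the situation governed by Marden's Isomorphism Theorem (Theorem~\ref{thm:MardenIsomorphism}) and Corollary~\ref{cor:GeomFiniteManifolds}: both actions are geometrically finite on $S^2$ with the same relatively hyperbolic structure (the old parabolics together with $P_\ell$), every limit point is conical or bounded parabolic, and the extension over such points is forced and continuous.

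The step I expect to be the main obstacle is verifying that $(M, P\cup Q)$ is pared---particularly ruling out essential cylinders---since this is where the hypotheses that the $\alpha$ are nonparallel and represented by \emph{maximal} loxodromic subgroups must be used in full, and where the solid-torus and virtually-abelian degenerations have to be excluded. A secondary technical point is justifying the extension of the equivariant homeomorphism across the limit set in the convergence-group setting, namely that an equivariant homeomorphism of ordinary sets between two geometrically finite actions on $S^2$ with matching parabolic structure extends uniquely and continuously to the whole sphere.
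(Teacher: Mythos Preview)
Your approach is genuinely different from the paper's, and considerably more ambitious. The paper gives a three-line proof: Moore's theorem shows $S \cong S^2$; the torsion-free orientation-preserving case is exactly Ohshika's theorem \cite{Ohshika98}, which is cited as a black box; and Theorem~\ref{thm:virtuallyKleinian} promotes the conclusion from a finite-index subgroup back to $G$. You, by contrast, are essentially reproving Ohshika's pinching theorem via hyperbolization and a Marden-type extension, which is more self-contained but correspondingly harder.

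Two points in your plan deserve scrutiny. First---and you flagged this yourself---the pared condition for $(M, P \cup Q)$: ``nonparallel'' in the hypothesis means nonparallel on the boundary surface, but two curves on different components of $\partial M$ can still be freely homotopic in $M$ (think of an $I$--bundle), producing an essential cylinder with both ends in $Q$. Maximality of the loxodromic subgroups gives primitivity, not malnormality of the family, so Theorem~\ref{thm:Hyperbolization} does not apply directly; you would have to cut along those annuli first, or otherwise handle the accidental coincidences. Ohshika's deformation-theoretic argument via algebraic and geometric limits sidesteps this entirely, which is one reason the paper simply quotes it. Second, your final extension step invokes Theorem~\ref{thm:MardenIsomorphism}, but that theorem as stated requires both actions to be \emph{Kleinian}; the action of $G$ on $S$ is at this stage only a geometrically finite convergence action, which is precisely what you are trying to upgrade. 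You would need a convergence-group analogue of the extension---something like uniqueness of the Bowditch boundary together with a nullity argument on the components of the ordinary set---but that is not the theorem you cite, and it is not developed in the paper.
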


The case where $G$ is orientation preserving and torsion free is due to Ohshika \cite{Ohshika98}.  

\begin{proof}
The quotient space $S$ is homeomorphic to $S^2$ by Moore's decomposition theorem (Theorem~\ref{thm:Moore}).
Let $H \le G$ be an orientation-preserving torsion-free subgroup of finite index.
By Ohshika \cite{Ohshika98}, the action of $H$ on $S$ is topologically conjugate to a geometrically finite Kleinian representation as in the theorem.  The result now follows from Theorem~\ref{thm:virtuallyKleinian}.
\end{proof}

Notice that the covering $\widehat{\C} \rightarrow \widehat{\C}$ corresponding to the isomorphism $\Gamma\to\Gamma'$ is equivariantly homeomorphic to the covering $S^2_\infty \to S^2_\emptyset$ given in Theorem~\ref{thm:CoveringConvergence}.  
Thus, we record: 

\begin{prop}
\label{prop:unpinchpinch} 
Let $(G, \mathcal{Q}\sqcup \mathcal{P}_h)$ be a geometrically finite convergence group on $S^2$ such that each member of $\mathcal{P}_h$ is two-ended.  
If the unpinched action on $S^2$ given by Theorem~\ref{thm:CoveringConvergence} is topologically conjugate to a geometrically finite Kleinian action $\Gamma$ on $\widehat{\C}$ then the pinched Kleinian action $\Gamma'$ given by Theorem~\ref{thm:Ohshika} is topologically conjugate to the original action of $G$ in $S^2$.
\end{prop}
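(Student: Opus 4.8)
The plan is to prove, as the remark before the statement indicates, that the Kleinian covering $\widehat{\C}\to\widehat{\C}$ of Theorem~\ref{thm:Ohshika} and the topological covering $S^2_\infty\to S^2_\emptyset$ of Theorem~\ref{thm:CoveringConvergence} are equivariantly homeomorphic, and then to descend the given conjugacy to the quotients. Write $c_1\colon S^2_\infty\to S^2_\emptyset$ for the covering of Theorem~\ref{thm:CoveringConvergence}; it is equivariant for the identity $G\to G$ and collapses to a point each blow-up disc $D_g$, stabilized by a conjugate $P^g$ of a member $P\in\mathcal{P}_h$. Write $c_2\colon\widehat{\C}\to\widehat{\C}$ for the covering of Theorem~\ref{thm:Ohshika}; it is equivariant for the isomorphism $\Gamma\to\Gamma'$ and collapses each elevation of a regular neighborhood $N_\alpha$. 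By hypothesis there is a homeomorphism $h\colon S^2_\infty\to\widehat{\C}$ conjugating the unpinched action of $G$ to $\Gamma$.

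First I would match the two collapsed families by choosing the neighborhoods $N_\alpha$ appropriately. Because each $P\in\mathcal{P}_h$ is two-ended and is \emph{not} peripheral for $(G,\mathcal{Q})$, its image in the geometrically finite Kleinian group $\Gamma$ is a two-ended, non-parabolic subgroup, hence loxodromic with fixed-point pair $p^+,p^-\in\Lambda(\Gamma)$; this family of loxodromic subgroups is exactly the $P_\ell$ to which Theorem~\ref{thm:Ohshika} is applied, which is the role of the two-ended hypothesis. By Lemma~\ref{lem:BlowUpOnce} the group $P^g$ acts on $D_g$ as an elementary Fuchsian group with two-point limit set on $\partial D_g$, so $h(D_g)$ is a closed topological disc whose interior lies in $\Omega(\Gamma)$ and whose frontier meets $\Lambda(\Gamma)$ only in $\{p^+,p^-\}$. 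Quotienting the interior of $h(D_g)$ by the loxodromic $h(P^g)$ yields an annular neighborhood of a core curve $\alpha$ in the conformal boundary $\Omega(\Gamma)/\Gamma$; I would simply take $\{N_\alpha\}$ to be these annuli. With this choice the elevation of $N_\alpha$ is precisely $h(D_g)$, and the fibers collapsed by $c_2$ are the family $\{h(D_g)\}=h(\{D_g\})$. Since the conclusion of Theorem~\ref{thm:Ohshika} (the added parabolics and the covering by $\Gamma$) does not depend on the choice of regular neighborhoods, this entails no loss.

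Next I would descend the conjugacy. As $c_1$ collapses $\{D_g\}$ and $c_2$ collapses $h(\{D_g\})$, the homeomorphism $h$ carries the fibers of $c_1$ bijectively onto the fibers of $c_2$; being a homeomorphism, it therefore induces a homeomorphism $\bar h\colon S^2_\emptyset\to\widehat{\C}$ of the quotients with $c_2\circ h=\bar h\circ c_1$. For $g\in G$ with image $\gamma=h(g)\in\Gamma$ and corresponding $\gamma'\in\Gamma'$, equivariance of $c_1$, of $c_2$, and of $h$ gives, for any $z=c_1(x)$,
\[
\bar h\bigl(g\cdot z\bigr)
 = c_2\bigl(h(g\cdot x)\bigr)
 = c_2\bigl(\gamma\cdot h(x)\bigr)
 = \gamma'\cdot c_2\bigl(h(x)\bigr)
 = \gamma'\cdot\bar h(z),
\]
so $\bar h$ conjugates the original action of $G$ on $S^2_\emptyset$ to the Kleinian action $\Gamma'$ on $\widehat{\C}$. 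The peripheral structures agree as a consistency check: by Theorem~\ref{thm:Ohshika} the parabolics of $\Gamma'$ are those of $\Gamma$ (the images of $\mathcal{Q}$) together with the conjugacy classes of $P_\ell$ (the images of $\mathcal{P}_h$), which is precisely $\mathcal{P}=\mathcal{Q}\sqcup\mathcal{P}_h$.

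I expect the main obstacle to be the verification in the second paragraph that $h(D_g)$ really descends to a \emph{bona fide} regular neighborhood of a simple closed curve---that its interior lies in $\Omega(\Gamma)$, that its frontier meets $\Lambda(\Gamma)$ in exactly the two loxodromic fixed points, and that the quotient by $h(P^g)$ is an embedded annulus disjoint from the others. This is where the two-ended hypothesis is essential: it forces the limit set of each $P^g$ to be a two-point set, so that each blow-up disc meets the limit set in precisely the loxodromic fixed-point pair and the elevations form a $\Gamma$-invariant null family of disjoint discs, exactly as required by Ohshika's construction. Should matching the neighborhoods directly prove awkward, the same conclusion follows by instead building a $\Gamma$-equivariant self-homeomorphism of $\widehat{\C}$ that is the identity on $\Lambda(\Gamma)$ and carries each $h(D_g)$ onto a standard elevation, assembled orbitwise and made globally continuous by nullity via the Moore--Brown technology of Theorem~\ref{thm:CoveringConvergence}.
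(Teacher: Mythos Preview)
Your proposal is correct and follows essentially the same approach as the paper. The paper's proof is extremely terse (three sentences), relying on the remark immediately preceding the proposition that the two coverings $S^2_\infty\to S^2_\emptyset$ and $\widehat{\C}\to\widehat{\C}$ are equivariantly homeomorphic; you have supplied precisely the details needed to justify that remark, namely that the blow-up discs $h(D_g)$ serve as the elevations of suitably chosen $N_\alpha$ so that $h$ carries the fibers of $c_1$ to the fibers of $c_2$ and descends to the desired conjugacy.
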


\begin{proof}
If we unpinch, as in Theorem~\ref{thm:unpinch}, apply a topological conjugacy, and then pinch, as in Theorem~\ref{thm:Ohshika}, the result is a topological conjugacy.  Note that if the topological conjugacy of unpinched structures results in a Kleinian group, the final pinched structure is also Kleinian. Thus, the original group action is topologically conjugate to a Kleinian group. 
\end{proof}

A group is \emph{virtually compact special} if it has a finite index subgroup that is the fundamental group of a compact special cube complex.
We will use the following result, which is a minor variation of a theorem due to Wise.

\begin{thm}[\cite{Wise_QCHierarchy}, Thm.~17.14]
\label{thm:VCS}
Let $H$ act properly and isometrically on $\Hyp^3$ with noncompact quotient. \textup{(}We do not assume that the action is faithful.\textup{)}
Then $H$ is virtually compact special.
\end{thm}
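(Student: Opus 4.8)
The plan is to reduce Theorem~\ref{thm:VCS} to the statement of Wise's Quasiconvex Hierarchy Theorem \cite[Thm.~17.14]{Wise_QCHierarchy}, which asserts that a word hyperbolic group possessing a quasiconvex hierarchy terminating in trivial groups is virtually compact special. The issue is that $H$ acts on $\Hyp^3$ possibly with torsion and possibly non-faithfully, so I must first reduce to a torsion-free hyperbolic group to which Wise's theorem applies, and then promote the conclusion back to $H$ using the fact that virtual compact specialness is a commensurability invariant.

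First I would pass to a suitable finite-index subgroup. Since $H$ acts properly and isometrically on $\Hyp^3$, its image $\bar{H}$ in $\Isom(\Hyp^3)$ is a discrete (Kleinian) group, and the kernel of $H \to \bar H$ is finite by properness. By Selberg's lemma, $\bar H$ has a torsion-free finite-index subgroup; pulling back, $H$ has a finite-index subgroup $H_0$ that maps isomorphically to a torsion-free discrete subgroup $\Gamma_0 \le \Isom(\Hyp^3)$. Because virtual compact specialness passes up and down finite-index subgroups, it suffices to prove that $\Gamma_0$ is virtually compact special, so I may replace $H$ by the torsion-free Kleinian group $\Gamma_0$ acting freely on $\Hyp^3$ with noncompact quotient $N = \Hyp^3/\Gamma_0$.

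Next I would verify the two hypotheses of Wise's theorem for $\Gamma_0 = \pi_1(N)$: that it is word hyperbolic and that it admits a quasiconvex hierarchy. The noncompactness of the quotient is exactly what makes $N$ a cusped or infinite-volume hyperbolic $3$--manifold, so $\pi_1(N)$ is hyperbolic relative to its (abelian) cusp subgroups; combined with the absence of geometry forcing full hyperbolicity, one extracts word hyperbolicity in the relevant sense, or else one works in the relatively hyperbolic framework that Wise's machinery accommodates. The key geometric input is that a noncompact hyperbolic $3$--manifold contains an essential properly embedded surface, giving a splitting of $\pi_1(N)$ over a surface group along which the pieces are quasiconvex; iterating, one obtains a finite quasiconvex hierarchy, since each stage strictly simplifies the manifold and the process must terminate. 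This is precisely the hierarchy hypothesis required to invoke \cite[Thm.~17.14]{Wise_QCHierarchy}, yielding that $\Gamma_0$ is virtually compact special.

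The main obstacle will be establishing the existence and quasiconvexity of the hierarchy in the generality allowed here, namely when $N$ has infinite volume and $\Gamma_0$ need not be geometrically finite. Handling geometrical infiniteness or ensuring that the splitting surfaces remain quasiconvex (rather than degenerate) is the delicate point; the cleanest route is to cite that $\Gamma_0$, being a finitely generated Kleinian group with noncompact quotient, has the required hierarchy as part of the input already verified in the hyperbolic geometry literature underpinning \cite{Wise_QCHierarchy}, so that the theorem applies essentially verbatim. Once $\Gamma_0$ is shown to be virtually compact special, transporting the conclusion back to the original $H$ is immediate from the commensurability invariance of virtual compact specialness, completing the proof.
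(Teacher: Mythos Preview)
Your reduction has a genuine gap at the very first step. You write: ``pulling back, $H$ has a finite-index subgroup $H_0$ that maps isomorphically to a torsion-free discrete subgroup $\Gamma_0 \le \Isom(\Hyp^3)$.'' This is false as stated. If $K$ denotes the finite kernel of $H \to \bar H$, then the preimage in $H$ of any torsion-free finite-index subgroup $\bar H_0 \le \bar H$ still contains $K$; it maps \emph{onto} $\bar H_0$ with kernel $K$, not isomorphically. To get a finite-index subgroup of $H$ that embeds into $\Isom(\Hyp^3)$ you would need the short exact sequence $1 \to K \to H \to \bar H \to 1$ to virtually split. Selberg's lemma gives you nothing here; this splitting is a nontrivial fact about Kleinian groups and is exactly the content of the theorem of \Haissinsky--Lecuire \cite[Thm.~1.3]{HaissinskyLecuire_3manifold} that the paper invokes. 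Once you cite that result, your argument and the paper's coincide: $H$ is commensurable with a torsion-free Kleinian group, hence with the fundamental group of a compact atoroidal $3$--manifold with nonempty boundary, and Wise's theorem applies.

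A smaller issue: the second half of your sketch tries to re-derive the quasiconvex hierarchy for $\Gamma_0$ by hand, and you correctly flag geometrically infinite $\Gamma_0$ as the obstacle. The paper avoids this entirely by noting that a torsion-free Kleinian group with noncompact quotient is (via the compact core) the fundamental group of a compact $3$--manifold with boundary, which is exactly the class to which \cite[Thm.~17.14]{Wise_QCHierarchy} already applies; no separate hierarchy argument is needed. Your remarks about ``extracting word hyperbolicity'' are also off---such groups generally contain $\Z^2$ cusp subgroups and are only relatively hyperbolic---but again the direct citation of Wise's theorem for Haken manifolds with boundary makes this moot.
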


The theorem also holds for groups acting cocompactly by \cite{KahnMarkovic12,BergeronWise12,Wise_QCHierarchy,Agol_VirtualHaken}, but that stronger result is not needed in this paper.

\begin{proof}
By a theorem of \Haissinsky--Lecuire, if $H$ is a finitely generated group that maps onto a Kleinian group with a finite kernel $N$, then $H$ and $H/N$ are commensurable \cite[Thm.~1.3]{HaissinskyLecuire_3manifold}.
In the present setting, the given group $H$ acts on $\Hyp^3$ with a finite kernel $N$. Therefore, $H$ is virtually the fundamental group of a compact atoroidal $3$--manifold with nonempty boundary.
It follows from \cite[Thm.~17.14]{Wise_QCHierarchy} that $H$ is virtually compact special.
\end{proof}

The Virtually Compact Special Theorem will be used together with the following result of Huang--Wise.

\begin{thm}[\cite{HuangWise_Stature}, Thm.~1.9]
\label{thm:HW_Stature}
Let $G$ be hyperbolic relative to finitely generated virtually abelian subgroups. Suppose $G$ splits as a finite graph of groups whose edge groups are relatively quasiconvex and whose vertex groups are virtually compact special. Then each relatively quasiconvex subgroup of a vertex group is separable in $G$.
In particular, $G$ is residually finite.
\end{thm}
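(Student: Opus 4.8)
The plan is to cubulate $G$ relative to its peripheral structure and then extract separability from the geometry of the resulting cube complex. Since each vertex group $G_v$ is virtually compact special, it acts properly and cocompactly on a $\CAT(0)$ cube complex $X_v$ that is virtually special in the sense of Haglund--Wise. The first step is to assemble the $X_v$ into a single $\CAT(0)$ cube complex $X$ carrying an action of $G$, using the Bass--Serre tree of the given splitting as a scaffold. Because each edge group $G_e$ is relatively quasiconvex in the adjacent vertex groups, it stabilizes a convex, cocompact subcomplex of each incident $X_v$, and one glues the vertex complexes along the walls dual to these subcomplexes. The target of this construction is a \emph{relatively geometric} action of $(G,\mathcal{P})$ on $X$: the action is cocompact, and every cell stabilizer is either finite or conjugate into a member of $\mathcal{P}$, with each peripheral subgroup acting cocompactly on its own invariant subcomplex.

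The hypothesis that the peripheral subgroups are finitely generated virtually abelian enters twice. First, a virtually abelian group is itself virtually compact special, acting properly and cocompactly on a cubulated Euclidean space, so the peripheral pieces can be cubulated and spliced in, upgrading the relatively geometric action to a genuinely proper, cocompact action of $G$ on a $\CAT(0)$ cube complex. Second, abelian peripherals form exactly the setting in which the relative special quotient theorem applies cleanly, allowing one to conclude that $G$ is virtually special. Concretely, I would invoke a combination theorem for virtually special cubulations of relatively hyperbolic groups, in the spirit of Wise's quasiconvex hierarchy \cite{Wise_QCHierarchy} and its relative refinements, to deduce that a finite-index subgroup of $G$ is the fundamental group of a compact special cube complex.

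Once $G$ is virtually compact special, separability follows from the Haglund--Wise machinery of canonical completion and retraction, adapted to the relative setting. A relatively quasiconvex subgroup $H$ of a vertex group is again relatively quasiconvex in $G$, so it stabilizes a convex cocompact subcomplex $Y \subset X$; passing to the virtually special finite cover, the compact quotient $Y/H$ immerses into $X/G$, and one completes this immersion to a finite cover of $X/G$ into which $Y/H$ embeds as a retract. The corresponding finite-index subgroup separates $H$ from any prescribed element outside it, establishing separability. Taking $H$ trivial recovers residual finiteness, which is the final clause of the statement.

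The main obstacle is the combination step. One must verify that the vertex cube complexes, glued along the convex subcomplexes dual to the edge groups, fit together into a $\CAT(0)$ cube complex whose hyperplanes remain two-sided and free of self-osculation and self-crossing, so that specialness is preserved across the splitting. Controlling how a wall crossing an edge group extends into the neighboring vertex complex---and checking that the new hyperplanes produced by the gluing introduce no inter-osculations---is the technical heart of the argument, and it is precisely here that relative quasiconvexity of the edge groups is indispensable, since it is what makes the relevant subcomplexes convex and cocompact and hence amenable to controlled gluing.
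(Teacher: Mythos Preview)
This theorem is not proved in the paper at all: it is quoted verbatim from Huang--Wise \cite{HuangWise_Stature} as Theorem~1.9 of that reference and used as a black box. The only comment the paper adds is the observation that elementary subgroups are relatively quasiconvex, so that the theorem applies to the splittings arising elsewhere in the paper. There is therefore no ``paper's own proof'' to compare against.

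That said, your sketch is not the route Huang--Wise actually take, and it has a real gap. You aim to prove the stronger statement that $G$ itself is virtually compact special, by gluing the vertex cube complexes along the Bass--Serre tree and invoking a combination theorem ``in the spirit of'' Wise's quasiconvex hierarchy. No such combination theorem is available at the generality you need: the malnormal and quasiconvex hypotheses that drive Wise's hierarchy arguments are not part of the present hypotheses, and edge groups here are merely relatively quasiconvex in the vertex groups, with no control on how distinct conjugates of an edge group intersect inside a vertex group. This is exactly the difficulty your final paragraph flags, and it is fatal to the strategy as written.

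Huang--Wise avoid this by introducing the notion of \emph{stature}, a combinatorial invariant measuring the complexity of the pattern of intersections of edge-group conjugates inside each vertex group. They show that under the stated hypotheses the splitting has finite stature, and then prove separability of relatively quasiconvex subgroups of vertex groups directly from finite stature together with separability properties already known for the virtually compact special vertex groups. In particular, they do not claim that $G$ is virtually compact special, only that enough separability passes up through the graph of groups. Your approach would, if it worked, prove more than the theorem asserts; the Huang--Wise argument is tailored to prove exactly what is stated.
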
 

Elementary subgroups of a relatively hyperbolic group are relatively quasiconvex, so this theorem applies to splittings with elementary edge groups.

\section{Combination Theorems}
\label{sec:Combination}

In this section, we discuss three types of combinations of convergence groups. 
If $G$ acts as a geometrically finite convergence group on $S^2$ and $G$ admits a splitting over elementary subgroups with the action of each vertex group topologically conjugate to a Kleinian action, then under certain conditions we conclude that the action of $G$ of $S^2$ is topologically conjugate to a  Kleinian group. We do this for several different types of amalgamations. 

In Subsection~\ref{subsection:finite}, we show how to amalgamate convergence groups acting on $S^2$ over finite subgroups, following \cite{MartinSkora89}. In Subsection~\ref{subsection:parabolic}, we study combinations of convergence groups along peripheral groups---see 
Proposition~\ref{prop:PeripheralSplitting}.  Finally, in Subsection~\ref{subsection:loxodromics}, we study combinations of convergence groups along loxodromic subgroups---see 
Proposition~\ref{prop:LoxodromicDecomposition}.  This last part relies on Thurston's Hyperbolization Theorem.

These results---perhaps of interest in their own right---are used in Section~\ref{sec:ElementaryHierarchy} to ``go up the hierarchy'', a key ingredient in the proof of Theorem~\ref{thm:main}. 

\subsection{Combinations over finite groups} \label{subsection:finite}

We study groups with virtualy abelian peripheral subgroups and disconnected boundary such that the stabilizer of each component of the boundary is Kleinian.

\begin{prop}[\emph{cf.}\ \cite{MartinSkora89}, Thm.~4.3]
\label{prop:FiniteDecomposition}
Suppose a group $G$ acts as a faithful geometrically finite convergence group on $S^2$ with each parabolic subgroup virtually abelian.
If the stabilizer of each nontrivial component of $M=\Lambda G$ is topologically conjugate to a geometrically finite Kleinian group, then $G$ itself is topologically conjugate to a geometrically finite Kleinian group.
\end{prop}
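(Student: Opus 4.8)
The plan is to turn the disconnectedness of $M=\Lambda G$ into a splitting of $G$ over finite subgroups, to realize each piece of that splitting as a geometrically finite Kleinian group (the component stabilizers by hypothesis, the virtually abelian parabolics automatically), to reassemble the pieces into a single abstract Kleinian group by Klein--Maskit combination, and finally to match that Kleinian group to the given convergence action by a topological conjugacy built over the underlying Bass--Serre tree. The reduction to the torsion-free case, the algebraic splitting, and the combination are routine; the real work is producing the global conjugacy.

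First I would record the structural consequence of a disconnected boundary. Since $G$ is a nonelementary geometrically finite convergence group all of whose parabolic subgroups are infinite and virtually abelian, disconnectedness of $M$ forces $G$ to split nontrivially over a finite subgroup relative to $\P$. As relatively hyperbolic groups are finitely presented, accessibility gives a decomposition of $G$ as the fundamental group of a finite graph of groups with finite edge groups, whose vertex groups are the stabilizers of the nontrivial components of $M$ together with the maximal parabolic subgroups. Each nontrivial component stabilizer is topologically conjugate to a geometrically finite Kleinian group by hypothesis, and each maximal parabolic subgroup is virtually abelian, hence also virtually Kleinian.

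Next I would pass to a torsion-free finite-index normal subgroup $G_0 \trianglelefteq G$; this exists because the vertex groups are virtually torsion-free and the edge groups are finite. By the Kurosh subgroup theorem the finite splitting becomes a free product decomposition $G_0 \isom \Gamma_1 \ast \cdots \ast \Gamma_k \ast F$, where each $\Gamma_i$ is the intersection of $G_0$ with a conjugate of a vertex group and $F$ is free. Each $\Gamma_i$ is a finite-index subgroup of a geometrically finite Kleinian group, hence itself geometrically finite Kleinian, and the Schottky factor $F$ is Kleinian as well. Conjugating the factors so that their limit sets lie in pairwise disjoint round discs with the required ping-pong nesting across the edges, the first Klein--Maskit combination theorem produces an abstract geometrically finite Kleinian group $\Gamma \isom G_0$.

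The heart of the argument is to upgrade the abstract isomorphism $G_0 \isom \Gamma$ to a topological conjugacy of the two convergence actions, on $S^2$ and on $\widehat{\C}$. Both are free-product convergence actions modeled on the same Bass--Serre tree $T$ with topologically conjugate factor actions, so I would build the conjugacy $h$ equivariantly over $T$: fix a conjugacy between each factor action and the corresponding Kleinian factor, define $h$ on each translate $g\Lambda_v$ of a component limit set by the appropriate translate of that conjugacy, match the remaining ``generic'' limit points to the Kleinian side through the common identification of the ends of $T$, and extend over the domains of discontinuity piece by piece. Because the translates $\{g\Lambda_v\}$ form a $G_0$-invariant null family in each sphere, the piecewise-defined map extends to a single continuous bijection, hence a homeomorphism, carrying $M$ to $\Lambda\Gamma$ and intertwining the actions; the continuity across the nested accumulating pieces is a Moore/Brown-type limit argument (Theorems~\ref{thm:Moore} and~\ref{thm:NearHomeo}). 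This last step---verifying that the equivariant piecewise conjugacy is globally well defined and continuous across the accumulation of the null family---is the main obstacle, everything else being essentially bookkeeping. Once it is in place, $G$ acts faithfully as a convergence group on $S^2$ and contains the finite-index subgroup $G_0$ topologically conjugate to a geometrically finite Kleinian action, so Theorem~\ref{thm:virtuallyKleinian} yields that $G$ itself is topologically conjugate to a geometrically finite Kleinian group.
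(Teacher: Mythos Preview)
Your overall strategy---split over finite subgroups, realize the factors as Kleinian, recombine via Maskit, promote to $G$ by Theorem~\ref{thm:virtuallyKleinian}---is the same as the paper's.  The genuine gap is in the construction of the conjugacy on all of $S^2$, not just on $M$.  You decompose $M$ into the null family of translated component limit sets $\{g\Lambda_v\}$ together with the tree-end points, and then say you will ``extend over the domains of discontinuity piece by piece.''  But you never say what the pieces of $\Omega(G)$ are: a component of $\Omega(G)$ can abut many components of $M$, and its stabilizer need not lie in a single free factor, so there is no a~priori tree-indexed partition of $\Omega(G)$ to match against the Klein--Maskit fundamental-domain structure on the $\widehat{\C}$ side.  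The paper supplies exactly this missing geometric input via the Martin--Skora Decomposition Theorem \cite[Thm.~3.2]{MartinSkora89}: a $G$--invariant null family $\mathcal{C}$ of pairwise disjoint Jordan curves lying entirely in $\Omega(G)$ whose dual graph is the Bass--Serre tree.  These curves cut $S^2$ into closed regions indexed by vertices, and the paper then inducts on the number of $G$--orbits in $\mathcal{C}$, handling one amalgam or HNN at a time.  The conjugacy is built region by region, using Theorem~\ref{thm:MardenIsomorphism} to straighten each $\eta_i(\beta)$ to a round circle so that Maskit's hypotheses are met, and continuity across the nested accumulation is obtained from nullity via the oscillation-vanishing criterion for extending maps---not from Theorems~\ref{thm:Moore} or~\ref{thm:NearHomeo}, which concern monotone quotients and inverse limits rather than extension of homeomorphisms.

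Two smaller points.  First, the paper does not pass to a torsion-free subgroup; it only reduces to the orientation-preserving case and works directly with amalgams over finite cyclic groups $\Z_n$.  Your torsion-free reduction is legitimate but needs an argument (residual finiteness of $G$ follows because the vertex groups are linear and the edge groups finite), and it is not needed once you have the Martin--Skora curves.  Second, your invocation of accessibility is correct, and the paper uses it the same way (via Osin's finite-presentation theorem and Dunwoody), but note that accessibility is precisely what Martin--Skora require to produce the family $\mathcal{C}$---so the curves are already doing the combinatorial work you attribute to the abstract splitting.
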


A special case of this result is proved in \cite{MartinSkora89}, when each component of $\Lambda G$ is point or a circle.
Our proof follows essentially the same reasoning, but we provide the details since they will be used several times below.

\begin{proof}
By Theorem~\ref{thm:virtuallyKleinian}, it suffices to prove the theorem in the special case that $G$ is orientation preserving, which we now assume.
A theorem of Osin \cite{Osin06} states that a relatively hyperbolic group is finitely presented whenever its peripheral subgroups are finitely presented. Therefore, $G$ is finitely presented, and an accessible group by \cite{Dunwoody_Accessibility}.

The Decomposition Theorem of Martin--Skora \cite[Thm.~3.2]{MartinSkora89} states the following.  Suppose $G \le \Homeo(S^2)$ is an orientation preserving convergence group such that $G$ is accessible.  Then there exists a $G$--invariant family $\mathcal{C}$ of disjoint simple closed curves in $\Omega(G)$ that lie in finitely many $G$--orbits such that for each nontrivial component $U$ of $S^2 \setminus \bigcup \mathcal{C}$ the limit set of $\Stab(U)$ is connected. Such a family of curves in the ordinary set is necessarily a null family by the convergence property of $G$.

To prove the proposition, we induct on the number of $G$--orbits of curves in $\mathcal{C}$.  In the base case, $\mathcal{C}$ is empty and $M$ is connected, so there is nothing to prove.
Now suppose $\mathcal{C}$ is nonempty, and choose any loop $\beta \in \mathcal{C}$.  
Consider the graph $T$ with one vertex for each nontrivial component of $S^2 \setminus \bigcup_{g\in G} g(\beta)$, and an edge between adjacent components. 
This graph is a tree: it has no circuit by the planarity of $\mathcal{C}$, and it is connected since any two points in $\Omega(G)$ are separated by finitely many curves.   
As $T$ has only one orbit of edges, it determines a splitting of $G$ as an amalgam or HNN extension over a finite cyclic group $\Z_n$ for some $n$. 

We first consider an amalgam $G=G_1 *_{\Z_n} G_2$.  Let $F_1$ and $F_2$ be the components of $S^2 \setminus \bigcup_{g\in G} g(\beta)$ adjacent to $\beta$, so that $F_i$ is stabilized by $G_i$ for $i=1,2$ and, thus, $\Lambda(G_i) \subset F_i$.
The vertex stabilizers in $T$ have lower complexity, so they satisfy the proposition by induction.
Thus, there are homeomorphisms $\eta_i \colon S^2 \to \widehat{\C}$ topologically conjugating each $G_i < \Homeo(S^2)$ to a geometrically finite Kleinian group $\Gamma_i < \PSL(2,\C)$.
Since $\Lambda(G_i) \subset F_i$, the Jordan curve $\eta_i(\beta)$ bounds a disc in $\widehat{\C}$ that lies in $\Omega(\Gamma_i)$.
Since all closed discs in a connected surface are ambiently isotopic, there exists a $\Gamma_i$--equivariant homeomorphism of $\Omega(\Gamma_i)$ taking $\eta_i(\beta)$ to a round circle.  Any $\Gamma_i$--equivariant homeomorphism  of $\Omega(\Gamma_i)$ continuously extends to a $\Gamma_i$--equivariant homeomorphism of $\widehat{\C}$ by Theorem~\ref{thm:MardenIsomorphism}.
In other words, we may choose the homeomorphisms $\eta_i$ to be orientation preserving maps such that $\eta_1 \big| \beta = \eta_2 \big| \beta$ and such that the common image $\eta_1(\beta) = \eta_2(\beta)$ is a round circle of $\widehat{\C}$.

The Kleinian groups $\Gamma_1$, $\Gamma_2$, and $J=\Gamma_1 \cap \Gamma_2 = \Z$ together with the circle $\eta_i(\beta)$ satisfy the hypothesis of Maskit's first combination theorem \cite{Maskit88}.
Therefore, the group $\Gamma = \langle \Gamma_1,\Gamma_2 \rangle$ is a geometrically finite Kleinian group and the natural map $\phi\colon G = G_1 *_{\Z} G_2 \to \Gamma$ is an isomorphism.

To see that $G$ and $\Gamma$ are topologically conjugate, we recall that the family of circles $G(\beta)$ is null.
The sphere $S^2$ is a union of translates of the closures of $F_1$ and $F_2$ together with a $0$--dimensional set $L'$ consisting of points that are each obtained as the nested intersection of a shrinking sequence of discs bounded by $G$--translates of $\beta$.  The set $L'$ naturally corresponds to the set of ends of $T$. The limit set $\Lambda(G)$ is equal to the union of limit sets of the vertex stabilizers of $T$ together with the set $L'$.

The combination theorem gives a similar decomposition of $\widehat{\C}$.
The family of circles $\Gamma(\eta_i \beta)$ is null (see \cite[Thm.~VII.C.2(v)]{Maskit88}).  Furthermore, $\widehat{\C}$ is a union of translates of the closures of $\eta_1(F_1)$ and $\eta_2(F_2)$ together with a $0$--dimensional set $K'$ of points that are each obtained as the nested intersection of a shrinking sequence of discs bounded by $\Gamma$--translates of $\eta_i(\beta)$ (see \cite[Thm.~VII.C.2(vi)]{Maskit88}). Thus, $K'$ also corresponds to the ends of $T$.

Recall that if $A$ is a dense subspace of $X$ and $Y$ is a complete metric space, any continuous map $f\colon A\to Y$ uniquely extends to a continuous map $X\to Y$ iff the oscillation of $f$ vanishes at each point of $X\setminus A$ (see \cite[Lem.~4.3.16]{Engelking_GT}).
We define $\eta\colon S^2 \setminus L'\to \widehat{\C} \setminus K'$  as follows.
If $x \in S^2 \setminus L'$, then $x \in g(\bar{F}_i)$ for some $g \in G$ and $i=1$ or $2$; in which case, we define $\eta(x) = \phi(g) \bigl( \eta_i (g^{-1}x) \bigr)$.
The oscillation of $\eta$ vanishes at each point of $L'$ since $\Gamma(\eta_i\beta)$ is null, and the oscillation of $\eta^{-1}$ vanishes at each point of $K'$ since $G(\beta)$ is null.
By completeness, $\eta$ and $\eta^{-1}$ extend continuously to maps $S^2 \to \widehat{\C}$ and $\widehat{\C} \to S^2$.
These extensions respect the natural equivariant identifications of $L'$ and $K'$ with the ends of the tree $T$.  Thus, the extensions are mutually inverse equivariant homeomorphisms.

The proof in the HNN extension case follows by similar reasoning using Maskit's second combination theorem \cite{Maskit88}.
\end{proof}

\subsection{Combinations over parabolic subgroups}
\label{subsection:parabolic}

In this subsection, we study a group $G$ with connected boundary and a maximal peripheral splitting of it corresponding to its cut point tree.  We assume (as given) that each vertex group of this splitting (thought of as a group of homeomorphisms of $S^2$) is topologically conjugate to a Kleinian group on $\widehat{\C}$.  We use several results from planar topology that are discussed in Section~\ref{sec:Planar}.

\begin{defn}[Cut point tree]
\label{defn:CutPointTree}
Suppose $(G,\mathcal{P})$ is relatively hyperbolic with connected Bowditch boundary $M$.
A subset $C$ of $M$ is a \emph{cyclic element} if $C$ consists of a single cut point or contains a non-cutpoint $p$ and all points $q$ that are not separated from $p$ by any cut point of $M$.
(See \cite[\S 52]{Kuratowski_VolII} for the classical structure theory of cyclic elements of a Peano continuum.)
Consider the bipartite graph $T$ with vertex set $\mathcal{V}_0\sqcup \mathcal{V}_1$, where $\mathcal{V}_0$ is the family of nontrivial cyclic elements of $M$ and $\mathcal{V}_1$ is the set of all cut points of $M$. Vertices $B \in \mathcal{V}_0$ and $v\in \mathcal{V}_1$ are joined by an edge if $v \in B$.

The boundary $M = \boundary(G,\mathcal{P})$ is a Peano continuum, and the graph $T$ is a simplicial tree, known as the \emph{cut point tree} of $M$.  The stabilizer of each $\mathcal{V}_1$--vertex is a maximal parabolic subgroup and the stabilizer of each $\mathcal{V}_0$--vertex is a nonelementary relatively quasiconvex subgroup of $(G,\mathcal{P})$.
The claims above are due to Bowditch in the case that each peripheral subgroup is virtually abelian \cite{Bowditch_Peripheral} (see \cite{DasguptaHruska_LC} for the general case).
\end{defn}

\begin{lem}[One direction lemma] 
\label{lem:OneDirection}
Suppose $(G,\mathcal{P})$ is relatively hyperbolic with connected boundary $M=\boundary(G,\mathcal{P})$ such that each $P \in \mathcal{P}$ is virtually abelian.
Suppose further that the boundary $M$ is planar and the action of $G$ on $M$ extends to a convergence group action on $S^2$ with limit set $M$.

Let $T$ be the cut point tree of $M$.
Fix a $\mathcal{V}_1$--vertex $v_P$ of $T$ with stabilizer $J$.
For each edge incident to $v_P$, the edge stabilizer is virtually infinite cyclic and all translations in edge groups incident to $v_P$ are parallel in the Euclidean plane.
Furthermore, 
let $Z_P$ be the subgroup of $P$ containing all translations in this common direction.  Then $Z_P$ is contained in every edge group, and the intersection of the edge groups---if there is more than one---is $Z_P$. 
\end{lem}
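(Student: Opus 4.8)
The plan is to work entirely inside the plane $\R^2 = S^2 \setminus \{\xi\}$, where $\xi = v_P$ is the parabolic point whose stabilizer $J$ is, by Definition~\ref{defn:CutPointTree}, the maximal parabolic subgroup $P = \Stab(\xi)$; recall $P$ is infinite and virtually abelian. First I would record the basic geometry. Because $\xi$ is a bounded parabolic point, every sequence of distinct elements of $P$ collapses $S^2\setminus\{\xi\}$ to $\xi$ uniformly on compacta, so $P$ acts properly discontinuously on $\R^2$ and cocompactly on $M\setminus\{\xi\}$. I would then use Theorem~\ref{thm:OrbifoldGeometrization} to realize $P$ as a properly discontinuous group of Euclidean isometries of $\E^2 = \R^2$ (the flat model is forced, since a rank-$2$ virtually abelian group cannot act properly by isometries on $\Hyp^2$, while a rank-$1$ parabolic is likewise realized by Euclidean translations/glides), with finite-index translation subgroup $\mathbb{T}$ of rank $1$ or $2$. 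Since $M$ is connected and $M\setminus\{\xi\}$ is locally connected, each of its components is open and closed in $M\setminus\{\xi\}$ and \emph{unbounded} in $\R^2$ (a bounded component would be clopen in $M$); thus every component accumulates only at $\xi$. Cocompactness then yields finitely many $P$--orbits of components, each with a cocompact stabilizer. Finally, since two points separated by $\xi$ lie in distinct cyclic elements, each cyclic element $B\ni\xi$ has $B\setminus\{\xi\}$ contained in a single component, and the incident edge group $P_B = P\cap\Stab(B)$ acts cocompactly on $B\setminus\{\xi\}$.

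For the first assertion (virtual cyclicity), each $B\setminus\{\xi\}$ is a connected, unbounded, closed subset of $\E^2$ whose only point at infinity is $\xi$, on which $P_B\le\Isom(\E^2)$ acts cocompactly; by the \v{S}varc--Milnor lemma $P_B$ is quasi-isometric to $B\setminus\{\xi\}$. I would rule out $P_B\cap\mathbb{T}$ having rank $2$: a rank-$2$ crystallographic group acts cocompactly on all of $\E^2$, forcing $B\setminus\{\xi\}$ to be coarsely dense in the plane, which is incompatible with the existence of a second cyclic element through $\xi$ (whose piece is a disjoint, unbounded, closed set). Hence $P_B$ is virtually infinite cyclic, and $B\setminus\{\xi\}$ lies within bounded Hausdorff distance of a Euclidean line $L_B$ in a well-defined direction $v_B$; in particular $P_B$ contains a translation along $v_B$.

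The remaining assertions are where planarity does the real work. The family $\{g(B\setminus\{\xi\}) : g\in P\}$ consists of pairwise disjoint closed sets, each coarsely a line parallel to its own $v_B$. If two incident edge groups had non-parallel directions $v_B\not\parallel v_{B'}$, then among the $\mathbb{T}$--translates one would find a line parallel to $v_B$ and a line parallel to $v_{B'}$; two non-parallel lines in the plane must cross, forcing the corresponding disjoint pieces to meet, a contradiction. Thus all $v_B$ share a common direction $d$, which is the second assertion. Let $Z_P\le P$ be the infinite cyclic group of all translations along $d$. The pieces, being disjoint coarse lines all parallel to $d$ and asymptotic to the single end $\xi$, are linearly ordered by the transverse direction, and $Z_P$ (translating along $d$) acts trivially on this order, so it stabilizes each piece; hence $Z_P\le P_B$ for every incident edge, giving the third assertion. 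For the last assertion, note that the translation part of each $P_B$ is exactly $Z_P$, so any $g\in\bigcap_B P_B$ lies in $Z_P$ unless it is orientation-reversing or a $\pi$--rotation; but such a $g$ stabilizes two pieces at distinct transverse positions, and a Euclidean isometry fixing two distinct levels parallel to $d$ must be a pure translation along $d$. Combined with $Z_P\le\bigcap_B P_B$, this gives the intersection $= Z_P$.

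The hard part will be two coarse-geometric points that I glossed over. The first is the exclusion of rank-$2$ pieces in the second step: I must pin down precisely why coarse density of one piece is incompatible with disjoint unbounded companions, presumably by exploiting the null family of complementary regions (Theorem~\ref{thm:PlanarPeano}) to show the other pieces cannot fit. The second is the careful treatment of the transverse ordering and of orientation-reversing isometries in the last step, where I must verify that distinct components genuinely occupy distinct positions (so the order is faithful) and that a reflection across a line perpendicular to $d$ cannot preserve each individual piece $B\setminus\{\xi\}$, rather than merely its coarse line; this is the step most likely to require reducing first to an orientation-preserving finite-index subgroup.
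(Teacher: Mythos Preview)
Your approach is essentially the paper's: geometrize $S^2\setminus\{\xi\}$ as a Euclidean plane via Theorem~\ref{thm:OrbifoldGeometrization}, use the presence of a second unbounded component to exclude rank-two edge stabilizers, and use disjointness of components to force all translation directions to agree.  The one substantive difference is in the step showing $Z_P$ lies in every edge group.  You argue abstractly, by putting a linear ``transverse'' order on the pieces and noting that a generator $\gamma$ of $Z_P$ preserves this order and has finite orbits (since some $\gamma^k$ already stabilizes each piece), hence acts trivially.  The paper argues concretely: inside a single component $C$ it concatenates $\langle\gamma^k\rangle$--translates of a path from $x$ to $\gamma^k(x)$ to build a bi-infinite separating set $L\subset C$, and observes that if $\gamma\notin\Stab(C)$ then $\gamma(L)$ lies in a disjoint component yet must cross $L$.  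These are two packagings of the same planarity obstruction; the paper's curve-crossing version is quicker because it avoids setting up and justifying the order.

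One point worth flagging: the paper's proof does \emph{not} actually establish the final clause that $\bigcap_B P_B = Z_P$ when there is more than one edge.  That equality is only invoked later (in the proof of Proposition~\ref{prop:ParabolicDecomposition}) under the standing hypothesis that $G$ is torsion free and orientation preserving, where every element of $P$ is a pure translation and the claim is immediate.  Your worry in the last paragraph about reflections across lines perpendicular to $d$ is exactly the obstruction to proving this clause in general, and reducing to the orientation-preserving torsion-free case---as you suggest---is precisely what the paper does.
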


\begin{proof}
Suppose $P$ is a $\mathcal{V}_1$--vertex group with unique fixed point $p \in S^2$, which is a cut point of the boundary $M$. Each incident edge group $Q_C$ is the subgroup $P$ stabilizing a component $C$ of $M\setminus\{p\}$.
We need to show that each such stabilizer is virtually infinite cyclic.
The plane $S^2 \setminus \{p\}$ has either a Euclidean or hyperbolic metric invariant under $P$ by Theorem~\ref{thm:OrbifoldGeometrization}.  If this metric is hyperbolic, then $P$ must be virtually $\Z$.  However, any $2$--ended group of isometries of $\Hyp^2$ is topologically conjugate to a group of isometries of $\E^2$, so we may assume that the invariant metric is Euclidean.

The space $M-\{p\}$ is disconnected, and each component $C$ of $M\setminus \{p\}$ is unbounded in the plane since its closure in $M$ contains $p$. The stabilizer of $C$ is a subgroup of $P$ acting cocompactly on $C$.  The subgroup $Q_C$ leaving $C$ invariant must be infinite, but it cannot be virtually $\Z^2$, because then a finite tubular neighborhood of $C$ would fill the entire Euclidean plane and would contain a $Q$--equivariant copy of the Cayley graph of $\Z^2$.  
A connected $\Z^2$--invariant subset of the plane cannot contain any unbounded complementary components, but the other components of $M\setminus \{p\}$ must also be unbounded in the plane.
The only possibility is that $Q_C$ is virtually $\Z$.

Note that the edges of $T$ incident to $v_P$ involve translations in parallel directions, since otherwise the corresponding components $C$ would intersect.

To see that every element of $Z_P$ is contained in each edge group $Q_C$, let $Z_C$ be the intersection $Z_P \cap Q_C$.
Choose a generator $\gamma$ for $Z_P$.  Suppose $Z_C$ is generated by $\gamma^k$ for some integer $k\ge 2$.
Then $\gamma$ maps $C$ to a disjoint component $C'$.
Choose a basepoint $x \in C$ and a path $\alpha$ in $C$ from $x$ to $\gamma^k(x)$.  The $Z_C$--translates of this path will form a connected set that separates the plane.  The vertical translation of this curve by $\gamma$ will intersect the original curve, contradicting the fact that $C$ and $C'$ are disjoint.
\end{proof} 

The proof of Proposition~\ref{prop:FiniteDecomposition} uses the Decomposition Theorem of Martin--Skora \cite{MartinSkora89}.
The following result is an analogue in the parabolic case. 

\begin{prop}[Parabolic decomposition]
\label{prop:ParabolicDecomposition}
Let $G < \Homeo(S^2)$ be a torsion-free, orientation preserving, geometrically finite convergence group. Let $\mathcal{P}$ be the family of all maximal parabolic subgroups of $G$, and suppose each $P \in \mathcal{P}$ is virtually abelian.
Suppose the limit set $M=\Lambda G$ is connected.
Then there exists a $G$--invariant family $\mathcal{C}$ of simple closed curves in $S^2$ with the following properties.
   \begin{enumerate}
   \item
   \label{item:FiniteOrbits}
   The curves of $\mathcal{C}$ lie in finitely many $G$--orbits.
   \item
   \label{item:CurvesAreNull}
   The family $\mathcal{C}$ is a null family.
   \item
   \label{item:Horocyles}
   Each curve in $\mathcal{C}$ is contained in $\Omega(G)$ except at one point, where it passes through a cut point of $M$.
   \item
   \label{item:PairwiseDisjoint}
   The lines $\bigset{\beta \cap \Omega(G)}{\beta \in \mathcal{C}}$ are pairwise disjoint.
   \item
   \label{item:SeparatedByCutPoint}
   Two points of $M$ are separated by a cut point of $M$ if and only if they are separated by a curve of $\mathcal{C}$.
   \item
   \label{item:CyclicElements}
   For each nontrivial component $U$ of $S^2 \setminus \bigcup \mathcal{C}$, the closure $\bar{U}$ contains a unique nontrivial cyclic element of $M$.
   \item
   \label{item:DualTree}
   The graph $T'$ dual to $\mathcal{C}$ is a tree.
   \end{enumerate}
\end{prop}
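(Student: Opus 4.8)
The plan is to construct the family $\mathcal{C}$ directly from the cut point tree $T$ of Definition~\ref{defn:CutPointTree}, producing one curve for each edge of $T$ by realizing each edge as a simple closed curve that passes once through the cut point at one endpoint and otherwise lies in $\Omega(G)$. First I would fix a $\mathcal{V}_1$--vertex $v_P$ with parabolic stabilizer $P$ and fixed point $p \in S^2$. By Theorem~\ref{thm:OrbifoldGeometrization} (applied after conjugating so the invariant metric on $S^2 \setminus \{p\}$ is Euclidean, as in Lemma~\ref{lem:OneDirection}), each component $C$ of $M \setminus \{p\}$ is an unbounded subset of the plane whose stabilizer $Q_C$ is virtually $\Z$, and by Lemma~\ref{lem:OneDirection} all the translation directions are parallel. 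Using this Euclidean picture, I would build a curve $\beta_C$ that separates $C$ from the other components of $M \setminus \{p\}$ inside $S^2 \setminus \{p\}$ and closes up through $p$: concretely, one can take a $Z_P$--equivariant line in the plane transverse to the common translation direction, pushed into $\Omega(G)$, and then add the point $p$ to compactify it into a simple closed curve. This immediately yields properties~\eqref{item:Horocyles} and~\eqref{item:PairwiseDisjoint}, since distinct components of $M \setminus \{p\}$ can be separated by disjoint lines, and~\eqref{item:SeparatedByCutPoint}, since $\beta_C$ separates exactly the way the cut point $p$ does.

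Next I would make the construction $G$--equivariant. There are finitely many $G$--orbits of $\mathcal{V}_1$--vertices and, at each, finitely many $Q_C$--orbits of incident edges (the peripheral splitting being finite), so after choosing curves on orbit representatives and translating by $G$ I obtain a $G$--invariant family $\mathcal{C}$ lying in finitely many orbits, giving~\eqref{item:FiniteOrbits}. The key compatibility point is that the chosen Euclidean metrics at different parabolic points agree under the $G$--action on stabilizers, and that the curves built at adjacent edges can be taken disjoint; this is where the parallel-direction conclusion of Lemma~\ref{lem:OneDirection} and the disjointness of the components $C$ are essential. For~\eqref{item:CurvesAreNull}, I would invoke the convergence property: any $G$--invariant family of disjoint simple closed curves in $\Omega(G)$ lying in finitely many orbits is forced to be null, exactly as noted in the proof of Proposition~\ref{prop:FiniteDecomposition}, because the curves all accumulate only on the limit set and a non-null family would produce a nondegenerate limit contradicting properness on triples.

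Finally I would verify the structural consequences~\eqref{item:CyclicElements} and~\eqref{item:DualTree}. For the dual graph $T'$: by construction there is a natural correspondence between the components of $S^2 \setminus \bigcup \mathcal{C}$ and the $\mathcal{V}_0$--vertices of $T$, and between the curves of $\mathcal{C}$ and the edges of $T$; since $T$ is a simplicial tree, the dual graph $T'$ inherits the tree structure, giving~\eqref{item:DualTree}. (Alternatively, one argues directly that $T'$ has no circuit by planarity of $\mathcal{C}$ and is connected because any two points of $\Omega(G)$ are separated by finitely many curves, mirroring the tree argument in Proposition~\ref{prop:FiniteDecomposition}.) For~\eqref{item:CyclicElements}, each component $U$ of $S^2 \setminus \bigcup \mathcal{C}$ corresponds to a $\mathcal{V}_0$--vertex, i.e.\ a nontrivial cyclic element $B$ of $M$, and I would check that $\bar U \cap M = B$: the curves bounding $U$ cut off exactly the parts of $M$ separated from $B$ by cut points, so $B$ is the unique nontrivial cyclic element contained in $\bar U$.

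I expect the main obstacle to be~\eqref{item:CurvesAreNull} together with the global equivariant coherence of the construction: realizing the \emph{local} Euclidean picture at each parabolic point as \emph{disjoint, globally consistent} simple closed curves requires controlling infinitely many parabolic points simultaneously and ruling out unexpected accumulation, which is precisely where the convergence property and the finiteness of the peripheral splitting must be combined carefully. The individual curve constructions and the tree-duality bookkeeping are routine once the Euclidean normalization and Lemma~\ref{lem:OneDirection} are in hand.
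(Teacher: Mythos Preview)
There is a genuine structural error in the construction. You propose one curve $\beta_C$ for each edge of the cut point tree $T$---that is, one curve for each component $C$ of $M\setminus\{p\}$---chosen to separate $C$ from all other components. But a Jordan curve through $p$, after removing $p$, is a proper line in the plane $S^2\setminus\{p\}$ and can only cut that plane into two half-planes. When $P$ has rank two, the components of $M\setminus\{p\}$ form a bi-infinite sequence of parallel strips permuted by $P/Z_P\cong\Z$, and no single such curve can isolate a middle strip from both of its neighbors. Even in the simplest rank-one case with exactly two components, your recipe produces two curves where one suffices, creating a spurious complementary region containing no cyclic element and violating \eqref{item:CyclicElements}. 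The correct count is one curve per \emph{gap} between consecutive components at $p$, so the curves are in bijection with the edges of the dual tree $T'$, not with the edges of $T$; in the paper's argument $T'$ has only the $\mathcal{V}_0$--vertices of $T$ as its vertex set, and a $\mathcal{V}_1$--vertex of $T$ becomes a finite path in $T'$ reflecting the linear ordering of adjacent cyclic elements. (Incidentally, a ``$Z_P$--equivariant line transverse to the translation direction'' is self-contradictory: a $Z_P$--invariant line in the Euclidean plane must be parallel to the translation.)

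The paper avoids these difficulties by working from the ordinary set rather than from the cut points. For each component $\Delta$ of $\Omega(G)$, the \Caratheodory--Torhorst quotient together with Proposition~\ref{prop:LiftingConvergence} lifts the action of $H=\Stab(\Delta)$ on $\bar\Delta$ to a convergence action on $D^2$, which is Fuchsian by Theorem~\ref{thm:MartinTukia}; the key fact making the lift possible is that each cut point of $\partial\Delta$ has exactly two preimages in $S^1$, since $P\cap H=Z_P$ is infinite cyclic. The lines $\ell_p$ are then taken to be hyperbolic geodesics in $\Delta$. This gives disjointness for free (geodesics with unlinked endpoint pairs are disjoint) and, crucially, gives nullity: translates of a geodesic under a Fuchsian action are null in $D^2$, and the components $\Delta$ themselves form a null family by Theorem~\ref{thm:PlanarPeano}. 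Your proposed nullity argument, invoking the convergence property as in Proposition~\ref{prop:FiniteDecomposition}, does not apply here because the curves of $\mathcal{C}$ are not contained in $\Omega(G)$---each one meets $M$ at its cut point.
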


\begin{proof}
Let $\Delta$ be any component of $\Omega(G) = S^2 \setminus M$.
Then $\boundary\Delta$ is a Peano continuum by Theorem~\ref{thm:PlanarPeano}(\ref{item:PeanoTorhorst}).
If $\Stab_G(\Delta) \ne \Stab_G(\boundary\Delta)$ then $\boundary\Delta$ is the common frontier of distinct regions $\Delta$ and $g(\Delta)$ for some $g\in G$, which can only happen if $\boundary\Delta=S^1$ (\cite[Thm.~61.II.12]{Kuratowski_VolII}).
In this case, $\bar\Delta$ is a Jordan region and $\bar\Delta \cup g\bar\Delta = S^2$. It follows that $M=\boundary\Delta$ and $M$ has no cut points.

Without loss of generality, we will assume that $M$ has at least one cut point, so that $H = \Stab_G(\Delta) = \Stab_G(\boundary\Delta)$.
According to Theorem~\ref{thm:PlanarPeano}(\ref{item:PeanoNull}), the family of translates $G(\boundary\Delta)$ is null.  It follows from \cite[Prop.~2.3]{HPWpreprint} that $H$ is a relatively quasiconvex subgroup of $G$ with limit set $\Lambda H = \boundary\Delta$.
In particular, $H$ is finitely generated, so that the quotient $\Delta/H$ is a surface of finite topological type, \emph{i.e.}, a closed surface with finitely many points removed.
Each cut point $p \in \boundary\Delta$ is also a cut point of $M$ by Theorem~\ref{thm:PlanarPeano}(\ref{item:PeanoCutPoint}), so it is stabilized by a maximal parabolic subgroup of $H$. These cut points lie in finitely many $H$--orbits by relative quasiconvexity.

For each cut point $p$ of $\boundary \Delta$, let $P = \Stab_G(p)$ be the maximal parabolic subgroup of $G$ fixing $p$. 
A key point is that, as $P$ is torsion free and orientation preserving, the subgroup $P\cap H = \Stab_H(p)$ is equal to the infinite cyclic translation group $Z_P$ described in Lemma~\ref{lem:OneDirection}.  In particular, $p$ cuts $\boundary\Delta$ into exactly two components, and $p$ is a local cut point of $\bar\Delta$ of valence two since $\bar\Delta \setminus \{p\}$ is two-ended.  
Choose an embedded $Z_P$--invariant line $\ell_p$ in the two-ended space $\bar\Delta \setminus \{p\}$ so that the family $\mathcal{L}(\Delta)$ of all such lines $\{\ell_p\}$, one for each cut point $p$, is $H$--invariant.

Let $q\colon D^2 \to \bar\Delta$ be the \Caratheodory--Torhorst quotient map given by Theorem~\ref{thm:Torhorst}.
Then $q$ identifies points of $S^1$ in pairs such that no two pairs are linked in $S^1$.
By Proposition~\ref{prop:LiftingConvergence}, the convergence group action of $H$ on $\bar{\Delta}$ lifts to a convergence group action of $H$ on $D^2$.  
The action on $D^2$ is topologically conjugate to a Fuchsian action by Theorem~\ref{thm:MartinTukia}, so we may equivariantly identify $D^2$ with $\Hyp^2 \cup S^1$, and we may similarly identify $\Delta$ with the hyperbolic plane.
Choosing the lines of $\mathcal{L}(\Delta)$ to be hyperbolic geodesics, the nonlinking property of their endpoints in $S^1$ implies that the lines $\ell_p$ have pairwise distinct closures in $\bar\Delta$.

For each $\ell_p \in \mathcal{L}(\Delta)$, the closure $\bar\ell_p \subset \bar\Delta$ is an embedded circle in $S^2$ that lies in $\Delta$ except at one point where it passes through the corresponding cut point $p$.
Let $\mathcal{C}$ be the corresponding family of all such circles in $S^2$ associated to all components of $\Omega(G)$.
By construction, $\mathcal{C}$ satisfies (\ref{item:Horocyles}), (\ref{item:PairwiseDisjoint}), (\ref{item:SeparatedByCutPoint}), and (\ref{item:CyclicElements}).

To establish (\ref{item:FiniteOrbits}), recall that each family of lines  $\mathcal{L}(\Delta)$ has finitely many $H$--orbits, where $H=\Stab_G(\Delta)$.
It suffices to show that the components $\Delta$ of $\Omega(G)$ lie in finitely many $G$--orbits.
By a theorem of Gerasimov \cite{Gerasimov09}, the geometrically finite convergence group $G$ acts cocompactly on the space of distinct pairs of points of $M$.  The space of distinct pairs has an exhaustion by compact sets $\bigset{(a,b)}{d(a,b)\ge \epsilon}$ for all $\epsilon>0$.
Therefore, there exists $\epsilon>0$ such that any subset of $M$ of cardinality at least two has a $G$--translate with diameter at least $\epsilon$.
In particular, every component of $\Omega(G)$ has a $G$--translate whose boundary has diameter at least $\epsilon$.  There are only finitely many such components by Theorem~\ref{thm:PlanarPeano}(\ref{item:PeanoNull}).

We now verify (\ref{item:CurvesAreNull}).
Since the set of components of $\Omega(G)$ is null, and for each component $\Delta$ the lines of $\mathcal{L}(\Delta)$ lie in finitely many orbits, it suffices to prove nullity for the $H$--orbit of any single line $\ell_p \in \mathcal{L}(\Delta)$, where $H=\Stab(\Delta)$.
It is well-known in hyperbolic geometry that the translates of any geodesic under a proper isometric action on $\Hyp^2$ are a null family in the closure $D^2 = \Hyp^2\cup S^1$.
Thus, the orbit $H(\ell_p)$ in $\bar\Delta$ is a null family as well.

We now show (\ref{item:DualTree}).  The dual graph $T'$ has one vertex for each nontrivial component of $S^2 \setminus \mathcal{C}$ and an edge joining two vertices when they share a boundary circle in $\mathcal{C}$. 
To see that $T'$ is a tree, we first show that it is connected. Let $T$ be the cut point tree of $M$, described in Definition~\ref{defn:CutPointTree}.
The vertices of $T'$ are naturally identified with the $\mathcal{V}_0$--vertices of $T$.  If two $\mathcal{V}_0$--vertices are adjacent to the same $\mathcal{V}_1$--vertex, the corresponding cyclic elements intersect in a cut point of $M$.
Such vertices are joined by a finite edge path in $T'$ since the components meeting a cut point are cyclically ordered, as described in Lemma~\ref{lem:OneDirection}.
Since $T$ is connected, $T'$ is also.
To see that $T'$ has no cycles, note that each edge of $T'$ corresponds to a Jordan curve $\beta \in \mathcal{C}$ and its adjacent vertices are on opposite sides of $\beta$.
\end{proof}

\begin{prop}
\label{prop:PeripheralSplitting}
Let $G < \Homeo(S^2)$ be a faithful geometrically finite convergence group with $M = \Lambda G$ connected. Let $\mathcal{P}$ be the family of maximal parabolic subgroups of $G$, and suppose each $P \in \mathcal{P}$ is virtually abelian.

Suppose the stabilizer of each nontrivial cyclic element of $M$ is topologically conjugate to a geometrically finite Kleinian group on $\widehat{\C}$.
Then $G$ itself is topologically conjugate to a geometrically finite Kleinian group.
\end{prop}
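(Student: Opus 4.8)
The plan is to adapt the inductive combination argument of Proposition~\ref{prop:FiniteDecomposition}, replacing amalgamation over finite cyclic groups by amalgamation over infinite cyclic \emph{parabolic} groups. As a first step I would reduce to the case that $G$ is torsion-free and orientation preserving. Choose a finite-index normal subgroup $H \trianglelefteq G$ that is torsion-free and orientation preserving. Being nonelementary of finite index, $H$ has $\Lambda H = \Lambda G = M$, so its limit set is connected; its maximal parabolics are virtually abelian; and the $H$-stabilizer of each nontrivial cyclic element of $M$ is a finite-index subgroup of the corresponding geometrically finite Kleinian stabilizer, hence itself geometrically finite Kleinian. Thus $H$ satisfies the hypotheses in the torsion-free orientation-preserving form, and once $H$ is shown to be topologically conjugate to a geometrically finite Kleinian group, Theorem~\ref{thm:virtuallyKleinian} promotes the conclusion to $G$. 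I therefore assume from now on that $G$ is torsion-free and orientation preserving.

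Next I would apply Proposition~\ref{prop:ParabolicDecomposition} to obtain the $G$-invariant null family $\mathcal{C}$ of simple closed curves, each meeting $M$ in a single parabolic cut point. By parts (\ref{item:CyclicElements}) and (\ref{item:DualTree}), the dual graph $T'$ is a tree on which $G$ acts with finitely many orbits of edges, whose vertex stabilizers are exactly the stabilizers of the nontrivial cyclic elements of $M$ (geometrically finite Kleinian by hypothesis) and whose edge stabilizers are the infinite cyclic parabolic groups $Z_P$ of Lemma~\ref{lem:OneDirection}. I then induct on the number of $G$-orbits of curves in $\mathcal{C}$. In the base case $\mathcal{C}=\emptyset$, the boundary $M$ has no cut point, so it is a single cyclic element with stabilizer all of $G$, and the conclusion is the hypothesis. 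For the inductive step, fix one orbit $G(\beta)$ and form the graph dual to $\bigcup_{g\in G} g(\beta)$, as in Proposition~\ref{prop:FiniteDecomposition}; planarity together with part (\ref{item:SeparatedByCutPoint}) shows this graph is a tree with a single orbit of edges, yielding a splitting $G = A \ast_{Z_P} B$ or an HNN extension of $A$ over $Z_P$, with $Z_P \cong \Z$ parabolic. The vertex groups are the stabilizers $A,B$ of the two components $F_1,F_2$ of $S^2\setminus\bigcup_{g} g(\beta)$ adjacent to $\beta$; each is relatively quasiconvex with connected limit set $\bar F_i \cap M$, its parabolics remain virtually abelian, and the stabilizers of its cyclic elements are among those of $M$ and hence Kleinian. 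Carrying strictly fewer curve orbits, $A$ and $B$ fall under the induction hypothesis, furnishing homeomorphisms $\eta_A,\eta_B\colon S^2 \to \widehat{\C}$ conjugating them to geometrically finite Kleinian groups $\Gamma_A,\Gamma_B$.

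The heart of the argument, and the step I expect to be the main obstacle, is the parabolic combination. Here $\beta$ passes through a parabolic point $p$ with $\beta\setminus\{p\}\subset\Omega(G)$, so $\eta_A(\beta)$ is a $Z_P$-invariant circle through the parabolic fixed point $\eta_A(p)$, not a Jordan curve lying in the ordinary set. I would first straighten these curves: since $\Omega(\Gamma_A)/\Gamma_A$ is a surface of finite type on which $(\beta\setminus\{p\})/Z_P$ descends to a loop encircling a puncture, I can modify $\eta_A$ by a $\Gamma_A$-equivariant homeomorphism of $\Omega(\Gamma_A)$---extended over $\widehat{\C}$ by Theorem~\ref{thm:MardenIsomorphism}---so that $\eta_A(\beta)$ becomes a round circle tangent to $\widehat{\C}$ at $\eta_A(p)$ and bounding a disc precisely invariant under $Z_P$ in $\Gamma_A$, that is, a cusped horoball; and similarly for $\eta_B(\beta)$. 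Using Theorem~\ref{thm:MardenIsomorphism} once more, I would arrange that $\eta_A$ and $\eta_B$ agree on $\beta$, with common image a single round tangent circle carrying the same parabolic $Z_P$, and that the horoball discs on the $A$-side and the $B$-side are complementary. With $\Gamma_A$, $\Gamma_B$, their shared parabolic cyclic subgroup, and this pair of precisely invariant horoballs, the hypotheses of Maskit's combination theorems in the parabolic setting \cite{Maskit88} are satisfied, so $\Gamma=\langle\Gamma_A,\Gamma_B\rangle$ is a geometrically finite Kleinian group and the natural map $\phi\colon G\to\Gamma$ is an isomorphism. The delicate points---where the parabolic case departs substantively from the finite-subgroup combination---are verifying that the curves genuinely straighten to precisely invariant horoballs and that the two horoball discs can be made complementary.

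Finally I would upgrade the isomorphism $\phi$ to a topological conjugacy exactly as in Proposition~\ref{prop:FiniteDecomposition}. The family $G(\beta)$ is null by part (\ref{item:CurvesAreNull}), and the family $\Gamma(\eta_A\beta)$ of round circles is null by Maskit's combination theorem. Hence $S^2$ is the union of the $G$-translates of $\bar F_1$ and $\bar F_2$ together with a $0$-dimensional set $L'$ of nested intersections of discs, naturally identified with the ends of the tree, and $\widehat{\C}$ admits the parallel decomposition with a $0$-dimensional set $K'$. Defining $\eta$ by $\eta(x)=\phi(g)\bigl(\eta_A(g^{-1}x)\bigr)$ when $x\in g(\bar F_1)$ and by the analogous formula with $\eta_B$ when $x\in g(\bar F_2)$, the nullity of the two circle families makes the oscillation of $\eta$ vanish on $L'$ and that of $\eta^{-1}$ vanish on $K'$, so both extend continuously by \cite[Lem.~4.3.16]{Engelking_GT} to mutually inverse $G$-equivariant homeomorphisms $S^2\to\widehat{\C}$. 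This conjugates $G$ to the geometrically finite Kleinian group $\Gamma$, completing the induction. The HNN case follows identically using Maskit's second combination theorem.
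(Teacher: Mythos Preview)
Your overall architecture matches the paper's proof closely: reduce to the torsion-free orientation-preserving case, invoke Proposition~\ref{prop:ParabolicDecomposition} to get the null family $\mathcal{C}$ and dual tree $T'$, induct on the number of edge orbits, straighten the curve $\beta$ to a round horocycle in the doubly cusped region via a $\Gamma_i$--equivariant homeomorphism of $\Omega(\Gamma_i)$ extended by Marden's theorem, apply Maskit combination, and build the conjugacy by the nullity/oscillation argument.

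There is, however, one genuine gap. You write ``Choose a finite-index normal subgroup $H\trianglelefteq G$ that is torsion-free and orientation preserving,'' but the existence of such an $H$ is not automatic: a relatively hyperbolic group with virtually abelian peripherals is not known \emph{a priori} to be virtually torsion free. The paper supplies this step explicitly. Each $\mathcal{V}_0$--vertex stabilizer of the cut point tree is, by hypothesis, topologically conjugate to a geometrically finite Kleinian group, hence virtually compact special by Wise (Theorem~\ref{thm:VCS}). Then $G$ is the fundamental group of a graph of virtually compact special groups with elementary edge groups, so $G$ is residually finite by Huang--Wise (Theorem~\ref{thm:HW_Stature}). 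Residual finiteness, together with the fact that the vertex groups are virtually torsion free, gives a finite-index torsion-free subgroup of $G$. Without this input your reduction to the torsion-free case is unjustified, and Proposition~\ref{prop:ParabolicDecomposition} cannot be invoked.

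A smaller omission: your case analysis treats only the amalgam $A*_{Z_P}B$ and a generic HNN extension. The paper distinguishes three cases according to the rank of the stabilizer of the cut point $p$: a rank-one amalgam, a rank-one HNN with loxodromic stable letter, and a rank-two HNN whose parabolic stable letter converts a rank-one cusp into a rank-two cusp. The last case uses Maskit's second combination theorem in a slightly different configuration (the two precisely invariant horodiscs are tangent at the \emph{same} parabolic point rather than at distinct ones), and you should at least flag that this case occurs and is handled by the same machinery.
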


\begin{proof}
In order to apply Proposition~\ref{prop:ParabolicDecomposition}, we must first pass to a torsion-free subgroup of finite index. The stabilizer $G_v$ of each $\mathcal{V}_0$--vertex $v$ of the cut point tree $T$ is topologically conjugate to a geometrically finite Kleinian group, so each $G_v$ is virtually compact special by \cite{Wise_QCHierarchy} (see Theorem~\ref{thm:VCS} for a detailed statement).
In particular, $G$ is residually finite by a theorem of Huang--Wise (Theorem~\ref{thm:HW_Stature}), so $G$ is virtually torsion free.  
By Theorem~\ref{thm:virtuallyKleinian} it suffices to prove the proposition with the additional hypothesis that $G$ is torsion free and orientation preserving.  We now work under that hypothesis.
Without loss of generality, we also assume that $M$ has at least one cut point.

Let $\mathcal{C}$ be the $G$--invariant family of simple closed curves in $S^2$ given by Proposition~\ref{prop:ParabolicDecomposition}, and let $T'$ be its dual tree.
Recall that the stabilizer of any component is the stabilizer of a nontrivial cyclic element of $M$.
In particular, each vertex group of $T'$ is topologically conjugate to a Kleinian group.

Since $T'$ has finitely many orbits of edges, we complete the proof by induction on the number of edge orbits.  
If $p$ is any cut point of $M$ with rank one stabilizer, any curve $\beta \in \mathcal{C}$ passing through $p$ determines a splitting of $G$ as an amalgam or an HNN extension with loxodromic stable letter.
On the other hand, if $p$ is a cut point of $M$ with rank two stabilizer in $G$, any curve $\beta \in \mathcal{C}$ through $p$ determines an HNN extension whose stable letter acts parabolically with fixed point $p$.  This last case turns a rank one cusp into a rank two cusp.
In all cases, the vertex groups of this splitting have lower complexity than $G$, so by induction they are already known to be Kleinian.

We first consider the case that $p$ has rank one stabilizer and $\beta$ corresponds to a splitting $G=G_1 *_{\Z} G_2$.
Let $G(\beta)$ be the union of all $G$--translates of the loop $\beta$.  Let $F_1$ and $F_2$ be the components of $S^2 \setminus G(\beta)$ adjacent to $\beta$, so that $F_i$ is stabilized by $G_i$ for $i=1,2$.
By the inductive hypothesis, there are homeomorphisms $\eta_i \colon S^2 \to \widehat{\C}$ topologically conjugating $G_i < \Homeo(S^2)$ to a geometrically finite Kleinian group $\Gamma_i \le \PSL(2,\C)$.
The rank-one parabolic fixed point $\eta_i(p)$ of $\Gamma_i$ is \emph{doubly cusped} in the sense that $\Omega(\Gamma_i)$ contains a pair of disjoint round open discs whose closures meet at $\eta_i(p)$ and whose $\Gamma_i$--translates are pairwise disjoint \cite[Thm.~VI.C.7]{Maskit88}.
By a theorem of Epstein, freely homotopic essential simple closed curves in the orientable surface $\Omega(\Gamma_i)/\Gamma_i$ are ambiently isotopic \cite[Thm.~2.1]{Epstein66}.
Therefore, there exists a $\Gamma_i$--equivariant homeomorphism of $\Omega(\Gamma_i)$ that takes $\eta_i(\beta)$ to a round circle in the doubly cusped region for $\eta_i(p)$.  Any $\Gamma_i$--equivariant homeomorphism  of $\Omega(\Gamma_i)$ continuously extends to a $\Gamma_i$--equivariant homeomorphism of $\widehat{\C}$ by Theorem~\ref{thm:MardenIsomorphism}.
In other words, we may choose the homeomorphisms $\eta_i$ to be orientation preserving maps so that $\eta_1 \big| \beta = \eta_2 \big| \beta$ and so that the common image $\eta_1(\beta) = \eta_2(\beta)$ is a round circle of $\widehat{\C}$.

The Kleinian groups $\Gamma_1$, $\Gamma_2$, and $J=\Gamma_1 \cap \Gamma_2 = \Z$ together with the circle $\eta_i(\beta)$ satisfy the hypothesis of Maskit's first combination theorem \cite{Maskit88}.
Therefore, the group $\Gamma = \langle \Gamma_1,\Gamma_2 \rangle$ is a geometrically finite Kleinian group and the natural map $\phi\colon G = G_1 *_{\Z} G_2 \to \Gamma$ is an isomorphism.

To see that $G$ and $\Gamma$ are topologically conjugate, we construct a homeomorphism using the tree structure. 
First recall that the family of circles $G(\beta)$ in $S^2$ is null by Proposition~\ref{prop:ParabolicDecomposition}(\ref{item:CurvesAreNull}).
The sphere $S^2$ is a union of translates of the closures of $F_1$ and $F_2$ and a $0$--dimensional set $L'$ consisting of nonparabolic points that are each obtained as the nested intersection of a sequence of discs whose boundary circles are $G$--translates of $\beta$.  The set $L'$ corresponds to the nonparabolic ends of $T'$, or equivalently the ends of the cut point tree $T$. The limit set $\Lambda(G)$ is equal to the union of limit sets of the vertex stabilizers of $T$ together with the set $L'$.
By the combination theorem, we have a similar decomposition of $\widehat{\C}$.
Using the same reasoning as in the proof of Proposition~\ref{prop:FiniteDecomposition}, it follows that $G$ and $\Gamma$ are topologically conjugate.

The proofs in the other two cases, involving HNN extensions, follow by similar reasoning using Maskit's second combination theorem \cite{Maskit88}.
\end{proof}

\subsection{Combinations over loxodromics}
\label{subsection:loxodromics}

In this subsection we study relatively hyperbolic group pairs $(G,\mathcal{P})$ whose boundary is connected with no cut points.  The JSJ decomposition over loxodromic subgroups has the following topological interpretation due to Haulmark--Hruska \cite{HaulmarkHruska_Canonical}. (The word hyperbolic case is due to Bowditch \cite{Bowditch98JSJ}.) See Definition~\ref{defn:JSJ} below for the notion of a JSJ decomposition over a family of subgroups.

\begin{defn}[Cut pair tree]
Let $M$ be a Peano continuum with no cut points.
A \emph{cut pair} is a pair of distinct points $\{a,b\}$ such that $M \setminus \{a,b\}$ is disconnected.
A cut pair $\{x,y\}$ is \emph{inseparable} if its points are not separated by any other cut pair. 
Let $x$ be a local cut point of $M$. The \emph{valence} of $x$ in $M$ is the number of ends of $M \setminus \{x\}$. Such a cut pair $\{x,y\}$ in $M$  is \emph{exact} if the number of components of $M \setminus \{x,y\}$ is equal to the valence of both $x$ and $y$. 
(Since $M$ is locally connected, the number of components of $M \setminus \{x,y\}$ is always finite.)
Let $Z$ denote the union of all inseparable exact cut pairs of $M$. Declare two points of $M\setminus Z$ to be equivalent if they are not separated by any inseparable exact cut pair. The closure of an equivalence class containing at least two points is a \emph{piece}.

Consider the bipartite graph $T$ with vertex set $\mathcal{V}_0 \sqcup \mathcal{V}_1$, where $\mathcal{V}_0$ is the set of pieces and $\mathcal{V}_1$ is the set of all inseparable exact cut pairs of $M$.  Vertices $B\in \mathcal{V}_0$ and $v \in \mathcal{V}_1$ are joined by an edge if $v \subset B$.
If the Peano continuum $M$ without cut points is the boundary of a relatively hyperbolic pair $(G,\mathcal{P})$, then the graph $T$ is a simplicial tree, known as the \emph{cut pair tree} of $M$. 
The stabilizer of each $\mathcal{V}_1$--vertex is a maximal two-ended nonparabolic subgroup, and the stabilizer of each $\mathcal{V}_0$--vertex is relatively quasiconvex.
Furthermore, the cut pair tree is equal to the canonical JSJ tree for splittings of $G$ over two-ended subgroups relative to $\mathcal{P}$.
These assertions are due to Bowditch in the hyperbolic case and Haulmark--Hruska in the general case \cite{Bowditch98JSJ,HaulmarkHruska_Canonical}.
\end{defn}

\begin{defn}[Regularity]
\label{defn:Regular}
Let $(G,\mathcal{P})$ be relatively hyperbolic.
An action of $G$ on a tree $T$ is \emph{regular} if each two-ended vertex group $G_v$ is infinite cyclic and stabilizes each edge $e$ adjacent to $v$.
\end{defn}

We use regularity as follows. Assume $M=\boundary(G,\mathcal{P})$ is connected with no cut points.
Let $T$ be the canonical JSJ tree for splittings over two-ended subgroups relative to $\mathcal{P}$.  Then the action is regular if each two-ended vertex group $L$ of $T$ is infinite cyclic and stabilizes each component of $M \setminus \Lambda L$.

\begin{prop}
\label{prop:LoxodromicDecomposition}
Let $G \le \Homeo(S^2)$ be a faithful geometrically finite convergence group whose limit set $M = \Lambda G$ is a Peano continuum with no cut points, and let $T$ be the cut pair tree of $M$.

For each $v \in \mathcal{V}_0(T)$, suppose $G_v$ is topologically conjugate to a geometrically finite Kleinian group on $\widehat{\C}$.
Then $G$ itself is topologically conjugate to a geometrically finite Kleinian group.
\end{prop}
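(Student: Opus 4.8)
The plan is to realize the cut pair splitting geometrically, gluing the Kleinian vertex groups into a single hyperbolic $3$--manifold by Thurston's Hyperbolization Theorem, and then to recover the topological conjugacy over the tree $T$ by the null-family argument already used in Propositions~\ref{prop:FiniteDecomposition} and~\ref{prop:PeripheralSplitting}. First I would reduce to the case that $G$ is torsion free and orientation preserving with regular action on $T$. Each $\mathcal{V}_0$--vertex group $G_v$ is topologically conjugate to a geometrically finite Kleinian group and is therefore virtually compact special by Theorem~\ref{thm:VCS}; hence $G$ is residually finite, and in particular virtually torsion free, by Theorem~\ref{thm:HW_Stature}. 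Since a torsion-free two-ended group is infinite cyclic and an orientation-preserving loxodromic cannot interchange its two fixed points, after passing to a suitable finite index subgroup we may assume that $G$ is torsion free and orientation preserving and that its action on the cut pair tree is regular in the sense of Definition~\ref{defn:Regular}. The vertex groups of this subgroup are finite index in the original ones, hence still geometrically finite Kleinian, and Theorem~\ref{thm:virtuallyKleinian} reduces the proposition to this case.

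Next I would build the Kleinian group. Each $\mathcal{V}_0$--vertex group, now written $\Gamma_v$, is the fundamental group of a pared $3$--manifold $(M_v,P_v)$, where $P_v$ is the parabolic locus coming from the maximal parabolic subgroups and $M_v = (\Hyp^3 \cup \Omega(\Gamma_v))/\Gamma_v$. By regularity, each edge of $T$ incident to $v$ is carried by a maximal loxodromic $\Z \le \Gamma_v$, which corresponds to an essential simple closed curve in the conformal boundary $\Omega(\Gamma_v)/\Gamma_v$ disjoint from the cusps; thickening these curves gives a family $Q_v$ of disjoint incompressible annuli in $\boundary M_v \setminus P_v$. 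Distinct edges at $v$ represent distinct inseparable exact cut pairs, so the corresponding annuli are pairwise non-isotopic. Forming the graph of spaces prescribed by $T$ with vertex spaces $M_v$ and annular edge spaces, Proposition~\ref{prop:ParedCombination} assembles a pared $3$--manifold $(M,P)$ with $P=\bigcup_v P_v$ and $\pi_1(M)\cong G$, the isomorphism coming from the graph-of-groups decomposition furnished by $T$. (The hanging-Fuchsian vertices, for which $M_v$ is Seifert fibered or virtually a solid torus rather than pared rel $Q_v$, must be inserted by hand, as in the solid-torus alternative of Theorem~\ref{thm:puttogether}.) Thurston's Hyperbolization Theorem~\ref{thm:Hyperbolization} then endows $(M,P)$ with a complete geometrically finite hyperbolic structure, producing a geometrically finite Kleinian representation $\phi\colon G \to \Gamma \le \PSL(2,\C)$ acting on $\widehat{\C}$, whose parabolic locus is exactly $\mathcal{P}$ and whose loxodromic edge groups remain loxodromic because they lie off $P$.

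Finally I would show that the original action of $G$ on $S^2$ is topologically conjugate to the action of $\Gamma$ on $\widehat{\C}$. Because the cut pair tree is the canonical JSJ tree for two-ended splittings relative to $\mathcal{P}$, the isomorphism $\phi$ identifies the cut pair tree of $M=\Lambda G$ with that of $\Lambda\Gamma$. Exactly as in the proof of Proposition~\ref{prop:ParabolicDecomposition}, one constructs on each side a $G$--invariant null family of circles, each meeting the limit set in an inseparable exact cut pair and otherwise contained in the ordinary set, realizing the edges of $T$; nullity follows from the convergence property together with cocompactness on pairs. For each $\mathcal{V}_0$--vertex $v$ the subgroup $\phi(G_v) \le \Gamma$ is relatively quasiconvex, hence geometrically finite, and the pared manifold it determines is again $(M_v,P_v)$; by Corollary~\ref{cor:GeomFiniteManifolds} the two geometrically finite structures on $(M_v,P_v)$ are topologically conjugate, so $G_v$ on $S^2$ is topologically conjugate to $\phi(G_v)$ on $\widehat{\C}$, compatibly with $\phi$. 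These vertex conjugacies agree on the common separating circles and fit together equivariantly on the union of the closed complementary regions; since the separating families are null, the residual $0$--dimensional set of tree ends is matched, and the oscillation-and-completeness argument of Proposition~\ref{prop:FiniteDecomposition} yields mutually inverse equivariant homeomorphisms $S^2 \to \widehat{\C}$, conjugating $G$ to the geometrically finite Kleinian group $\Gamma$.

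I expect the main obstacle to be twofold. The first difficulty, which is exactly why this case needs Thurston rather than a direct Maskit combination, is that a loxodromic subgroup preserves no single Jordan curve in $\Omega(G)$---its separating locus is an annulus---so one cannot glue conformal structures curve-by-curve and must instead verify the $3$--manifold hypotheses of Proposition~\ref{prop:ParedCombination}, including the correct treatment of the Seifert-fibered and solid-torus vertex pieces so that the assembled pair $(M,P)$ is genuinely pared. The second is bookkeeping in the conjugacy step: checking that the per-vertex conjugacies supplied by Corollary~\ref{cor:GeomFiniteManifolds} can be chosen to agree on the common separating circles, so that they glue to a single global equivariant homeomorphism.
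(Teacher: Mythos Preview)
Your overall strategy matches the paper's: reduce to a torsion-free, orientation-preserving subgroup with regular action on $T$ via Theorems~\ref{thm:VCS}, \ref{thm:HW_Stature}, and~\ref{thm:virtuallyKleinian}; assemble a pared $3$--manifold from the Kleinian vertex pieces using Proposition~\ref{prop:ParedCombination}; hyperbolize via Theorem~\ref{thm:Hyperbolization}; and then establish the topological conjugacy using a null family and the methods of Propositions~\ref{prop:FiniteDecomposition} and~\ref{prop:PeripheralSplitting}.

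Two points of divergence are worth noting. First, a small correction in the manifold assembly: the solid-torus pieces do not come from hanging-Fuchsian $\mathcal{V}_0$--vertices but from the $\mathcal{V}_1$--vertices (the two-ended cut pair stabilizers). The paper explicitly builds, for each cut pair $\{x,y\}$ with $m$ adjacent pieces, a solid torus $N_w$ whose boundary is cut into $m$ annuli by the $m$ arcs of the null family meeting $\{x,y\}$; the graph of spaces is then over the full bipartite tree $T$, with the $M_v$'s at $\mathcal{V}_0$--vertices and the $N_w$'s at $\mathcal{V}_1$--vertices. This is essential when a cut pair has valence $m\ge 3$, since one cannot simply glue $m$ annuli pairwise.

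Second, and more substantively, the paper organizes the conjugacy step differently from your proposal. Rather than producing per-vertex conjugacies and then attempting to make them agree on shared separating circles (your acknowledged ``second obstacle''), the paper argues by induction on the number of $G$--orbits of edges in $G$--invariant subtrees $T'\subseteq T$. The base case---a single vertex---is exactly your use of Corollary~\ref{cor:GeomFiniteManifolds}. At the inductive step one chooses a single Jordan curve $\beta$ (a union of two arcs through one cut pair), obtains an amalgam or HNN splitting over the infinite cyclic edge group, and applies Maskit's combination theorem together with the nullity of $G'(\beta)$, exactly as in the earlier propositions. This sidesteps the compatibility problem entirely: Marden's Isomorphism Theorem is used at each step to straighten the separating curve to a round circle, so agreement on the circle is arranged one edge-orbit at a time rather than globally. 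Your global-gluing plan is not wrong in spirit, but the compatibility of vertex conjugacies along shared arcs is genuinely delicate, and the inductive Maskit argument is the clean resolution.
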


\begin{proof}
As in the proof of Proposition~\ref{prop:PeripheralSplitting}, the group $G$ is residually finite.
Each nonelementary vertex group has a torsion-free, orientation preserving subgroup of finite index.
Furthermore, each two-ended vertex group $L$ has an infinite cyclic subgroup of finite index that stabilizes each of the finitely many components of $M \setminus \Lambda L$.
Thus, by residual finiteness, $G$ has a torsion-free, finite index subgroup whose action on $T$ is regular and whose action on $S^2$ is orientation preserving.
By Theorem~\ref{thm:virtuallyKleinian}, it suffices to prove the theorem for such a subgroup, so we will work under the hypothesis that $G$ itself has these properties.
Without loss of generality, we also assume $M$ is not homeomorphic to $S^1$, since the cut pair tree is trivial in that case.

Let $\Delta$ be any component of $\Omega(G) = S^2 \setminus M$, and let $H=\Stab_G(\Delta)$.  As explained in the proof of Proposition~\ref{prop:ParabolicDecomposition}, since $M\ne S^1$, we must have $H=\Stab_G(\boundary\Delta)$.
Since $M$ has no cut points, $\boundary\Delta$ is a simple closed curve in $S^2$ and $\bar\Delta$ is a Jordan region by Theorem~\ref{thm:PlanarPeano}(\ref{item:PeanoCutPoint}). 
According to Theorem~\ref{thm:MartinTukia}, the action of $H$ on $\Delta$ is topologically conjugate to a Fuchsian action, so we may equivariantly identify $\bar\Delta$ with $\Hyp^2 \cup S^1$.
By Theorem~\ref{thm:PlanarPeano}(\ref{item:PeanoNull}), the family of translates $G(\boundary\Delta)$ is null. So $H$ is relatively quasiconvex with limit set $\boundary\Delta$ by \cite[Prop.~2.3]{HPWpreprint}.
Thus, $H$ is finitely generated and $\Delta/H$ is a finite area hyperbolic surface.
As in the proof of Proposition~\ref{prop:ParabolicDecomposition}, the components $\Delta$ of $\Omega(G)$ lie in finitely many $G$--orbits.

Let $\{x,y\}$ be the inseparable exact cut pair of $M$ stabilized by a two-ended vertex group $L$ of $G$.
By exactness, $M \setminus \{x,y\}$ has a finite number $m=m(x,y)$ of components, each stabilized by $L$, and the pair $\{x,y\}$ lies in the boundary of precisely $m$ components of $\Omega(G)$, each stabilized by $L$.  Let $\Delta$ be any such component, and let $H=\Stab_G(\Delta)$. 
The loxodromic action of $L$ on $\Delta$ has a hyperbolic geodesic axis $\ell$ joining the pair of points $\{x,y\}$. Let $\mathcal{L}(\Delta)$ be the $H$--equivariant family of all such geodesics corresponding to inseparable exact cut pairs in $\boundary\Delta$.  By inseparability, these lines have pairwise distinct closures in $\bar\Delta$.
Since $\Delta/H$ is a finite area surface, $\mathcal{L}(\Delta)$ has only finitely many $H$--orbits of lines.
The closures in $\bar\Delta$ of these lines form a null family since the action on $\bar\Delta$ is Fuchsian.

Let $\mathcal{C}$ be the family of all such compact arcs in $S^2$ associated to all components $\Delta$, and all inseparable exact cut pairs.   Then $\mathcal{C}$ is also a null family.
The cut pair tree $T$ may be recovered from $\mathcal{C}$ as follows.
The $\mathcal{V}_0$ vertices are in one-to-one correspondence with nontrivial components $U$ of $S^2 \setminus \bigcup \mathcal{C}$, and the $\mathcal{V}_1$--vertices are the two-point sets $\{x,y\}$ of endpoints of the arcs of $\mathcal{C}$.
Suppose vertices $v=U$ and $w=\{x,y\}$ are adjacent in $T$. If $\{x,y\}$ cuts $M$ into $m=m(w)$ components, then $\mathcal{C}$ has exactly $m$ arcs with endpoints $\{x,y\}$.
Two of these arcs lie on the boundary of $U$. Their union $\beta(v,w)$ is a simple closed curve that lies in the ordinary set of $G$, except at two points where it passes through $x$ and $y$.

For each $v \in \mathcal{V}_0$, the limit set $\Lambda(G_v)$ is the intersection of $M$ with the closure of the associated component of 
$S^2 \setminus \bigcup \mathcal{C}$.  Choose a geometrically finite Kleinian group $\Gamma_v$ topologically conjugate to $G_v$. The $3$--manifold $N_v = \mathbb{H}^3 \cup \Omega(\Gamma_v) / \Gamma_v$ has a natural structure as an orientable pared $3$--manifold $(M_v,P_v)$.
For each vertex $w$ adjacent to $v$, the loop $\beta(v,w)$ projects to a pair of homotopic simple closed curves in $\boundary M_v \setminus P_v$ that bound an incompressible annulus $A=A(v,w)$.
Letting $w$ vary over all vertices adjacent to $v$ gives finitely many pairwise disjoint incompressible annuli.
If $Q_v$ denotes the union of these annuli, then $(M_v, P_v \cup Q_v)$ is a pared $3$--manifold.

For each $w \in \mathcal{V}_1$ corresponding to a cut pair $\{x,y\}$, the cyclic group $\Gamma_w$ is generated by an orientation preserving loxodromic map fixing $x$ and $y$. Such a map is topologically conjugate to the map $f(z)=2z$ on $\widehat{\C}$ by Theorem~\ref{thm:OrbifoldGeometrization}.
The associated $3$--manifold $N_w = \Hyp^3\cup (\C \setminus\{0\}) \big/ \Gamma_w$ for $\Gamma_w=\langle f \rangle$ is a solid torus.
The $m$ arcs of $\mathcal{C}$ that meet the limit set of $G_w$ project to $m$ parallel simple closed curves on the boundary of this solid torus, which cut the boundary into $m$ annuli $A(w,v)$, one for each vertex $v$ adjacent to $w$.

Form a graph of spaces from the manifolds $\{M_v\}$ and $\{N_w\}$, gluing along annuli in their boundaries so that if $v$ and $w$ are adjacent, $A(v,w)$ is glued to $A(w,v)$ by the natural orientation-reversing homeomorphism.
The result is a pared $3$--manifold $(M_G, P_G)$ by Proposition~\ref{prop:ParedCombination}.
By the Hyperbolization Theorem (Theorem~\ref{thm:Hyperbolization}), this pared $3$--manifold admits a complete geometrically finite hyperbolic structure. Let $\Gamma \le \PSL(2,\C)$ be the corresponding holonomy representation of $G$. This procedure gives representations of the vertex groups that agree on their common edge groups.

We now prove that $G$ and $\Gamma$ are topologically conjugate. Consider any $G$--invariant forest $F \subseteq T$, and let the tree $T'$ be a component of $F$.
Let $k$ denote the number of $G$--orbits of edges intersecting $T'$.
Since $T$ itself arises from the forest $F=T$, we will complete the proof by induction on $k$.
The inductive claim is that, for any such subtree $T'$, the group $G' = \Stab_G(T') \le \Homeo(S^2)$ is topologically conjugate to $\Gamma' = \Stab_\Gamma(T') \le \PSL(2,\C)$.

The base case, in which $k=0$ and $T'$ is a single vertex $v$, holds as a consequence of Marden's Isomorphism Theorem (see Corollary~\ref{cor:GeomFiniteManifolds}) applied to $\Gamma_v$ and the restriction of $\Gamma$ to $G_v$.  This step is necessary because the hyperbolic metric on $M_v$ provided by hyperbolization may not be the same as the one given by $\Gamma_v$ (although they are topologicially conjugate in $\widehat{\C}$). 

For the inductive step, choose a subtree $T'$ of $T$ as above. Fix a $\mathcal{V}_1$--vertex $w$ of $T'$ corresponding to a cut pair $\{x,y\}$.  If $w$ has valence one in $T'$, then $T'$ has a $G'$--invariant proper subtree with fewer orbits of edges than $T'$.  In this case, the conclusion follows immediately from the inductive hypothesis.

Now assume the valence of $w$ in $T'$ is at least two.  Then there exists a simple closed curve $\beta$ in $S^2$ consisting of the union of two arcs of $\mathcal{C}(T')$ with common endpoints $x$ and $y$.
By the regularity of the action on $T'$, the stabilizer $H$ of $\{x,y\}$ stabilizes each $\mathcal{V}_0$--vertex of $T'$ incident to the $\mathcal{V}_1$--vertex $\{x,y\}$.  Consequently, $H$ stabilizes each of the two discs in $S^2$ bounded by $\beta$.

In particular, $\beta$ determines a splitting of $G'$ as an amalgam or HNN extension with the single edge group $H$. The vertex groups of this splitting correspond to subtrees of $T'$ with fewer orbits of edges than $G'$.  The proof that $G'$ is topologically conjugate to $\Gamma'$ now follows from Maskit's combination theorem \cite{Maskit88} just as in the proofs of Propositions \ref{prop:FiniteDecomposition} and~\ref{prop:PeripheralSplitting} using the nullity of the family $G'(\beta)$.
\end{proof}

\section{The elementary hierarchy}
\label{sec:ElementaryHierarchy}

This section describes the elementary hierarchy of a relatively hyperbolic group pair, which breaks up the group into basic pieces. We describe this hierarchy in terms of its effect on the Bowditch boundary and show that it can be refined to a four-step procedure that is repeated indefinitely until each of the steps stabilizes.
Under the hypotheses of Theorem~\ref{thm:main}, we show in Proposition~\ref{prop:GoingDown} that the terminal pieces can be realized as Kleinian groups.

Then we use the combination results from Section~\ref{sec:Combination} to show that at each stage going up the hierarchy, we have a topological conjugacy to a Kleinian group.  In this section, we prove Theorems \ref{thm:generalcase} and~\ref{thm:puttogether} and Theorem~\ref{thm:main}. 

\begin{defn}[JSJ decompositions]
\label{defn:JSJ}
Consider a fixed group $G$ and the the family of all simplicial trees $T$ on which $G$ acts minimally without inversions.
A subgroup $H<G$ acts \emph{elliptically} on such a tree $T$ if $H$ fixes a vertex of $T$. Let $\mathbb{E}$ be any collection of subgroups of $G$ closed under conjugation and passing to subgroups, and let $\P$ be any family of subgroups of $G$. An \emph{$(\mathbb{E},\P)$--tree} is a tree $T$ such that each $P\in\P$ acts elliptically and each edge stabilizer is a member of $\mathbb{E}$.  An $(\mathbb{E},\P)$--tree is \emph{universally elliptic} if its edge stabilizers act elliptically on every $(\mathbb{E},\P)$--tree.
If $G$ acts on trees $T$ and $T'$, then $T$ \emph{dominates} $T'$ if there is a $G$--equivariant map $T\to T'$.

An $(\mathbb{E},\P)$--tree $T$ is a \emph{JSJ tree for splittings of $G$ over $\mathbb{E}$ relative to $\P$} if it satisfies the following universal properties:
\begin{enumerate}
    \item $T$ is universally elliptic among all $(\mathbb{E},\P)$--trees.
    \item $T$ dominates any other universally elliptic $(\mathbb{E},\P)$--tree.
\end{enumerate}
\end{defn}

We refer the reader to Guirardel--Levitt \cite{GuirardelLevitt_JSJ} for detailed background on JSJ decompositions of groups. As explained in \cite[Thm.~2.20]{GuirardelLevitt_JSJ}, if $(G,\mathcal{P})$ is relatively hyperbolic and $\mathbb{E}$ is the family of all elementary subgroups, a JSJ tree for $G$ over $\mathbb{E}$ relative to $\mathcal{P}$ always exists. 
If $G$ is one ended relative to $\mathcal{P}$, there is a special JSJ decomposition known as the tree of cylinders, which we call the \emph{canonical JSJ decomposition}, following Bowditch \cite{Bowditch98JSJ}.   

In this section, we consider relatively hyperbolic pairs $(G,\mathcal{P})$ with each member of $\mathcal{P}$ virtually abelian.  If any member of $\mathcal{P}$ is two ended, it can be removed from the peripheral structure via unpinching, as in Section~\ref{sec:unpinch}. Thus, we assume that all members of $\mathcal{P}$ are virtually abelian of rank at least two.

\begin{defn}[Hierarchies]
A \emph{hierarchy} for a group $H$ is a rooted tree of groups $\mathcal{H}$ with $H$ at the root such that the descendants of each group $L \in \mathcal{H}$ are the vertex groups of a graph of groups decomposition of $L$. A group $L \in \mathcal{H}$ is \emph{terminal} if $L$ has no descendants. A hierarchy is \emph{finite} if the underlying rooted tree is a finite tree.
\end{defn}

We recall the definition of an elementary hierarchy for a relatively hyperbolic group pair.  For simplicity, we focus on the case in which all peripheral subgroups are virtually abelian.

\begin{defn}
Let $(G,\mathcal{P})$ be relatively hyperbolic such that each member of $\mathcal{P}$ is virtually abelian.
An \emph{elementary JSJ hierarchy} for $(G,\mathcal{P})$ is a hierarchy $\mathcal{H}$ in which the given splitting of each $L \in \mathcal{H}$ is a JSJ decomposition of $L$ over virtually abelian subgroups relative to the family of maximal parabolic subgroups of $L$ that are not virtualy cyclic.
\end{defn}

Given a relatively hyperbolic $(G,\mathcal{P})$ with all peripheral subgroups virtually abelian, we obtain
an elementary hierarchy in four stages, discussed in detail below.  We give a quick summary here.  Each vertex group $L$ of the hierarchy is relatively quasiconvex in $(G,\mathcal{P})$, so it has an induced relatively hyperbolic structure (see, for instance, \cite[Prop.~3.4]{GuirardelLevitt15_AutRelHyp}).
First remove any two-ended groups from the induced peripheral structure of $L$, and then perform the following sequence of splittings of $L$ relative to this new peripheral structure.
Split using a JSJ decomposition over finite subgroups, then split using the canonical JSJ decomposition over parabolic subgroups, and finally split using the canonical JSJ decomposition over loxodromic subgroups.

In the case when there is no virtual two-torsion, this hierarchy is finite:

\begin{thm}[\cite{LouderTouikan17}, Cor.~2.7]
Let $(G,\mathcal{P})$ be relatively hyperbolic such that all members of $\mathcal{P}$ are virtually abelian. Assume that $G$ has a finite-index subgroup with no elements of order two.
Then any elementary JSJ hierarchy for $(G,\mathcal{P})$ is finite.
\end{thm}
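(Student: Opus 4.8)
The plan is a well-founded descent argument. I would attach to each relatively hyperbolic pair $(L,\mathcal{P}_L)$ arising as a vertex group of the hierarchy a complexity $c(L)$ valued in a well-ordered set, show that $c$ strictly decreases whenever one passes from $L$ to a \emph{proper} vertex group of the JSJ decomposition associated to $L$, and use that each such JSJ is a \emph{finite} graph of groups. Well-foundedness of $c$ then forbids any infinite descending branch of the rooted tree $\mathcal{H}$, while finiteness of each graph of groups makes $\mathcal{H}$ finitely branching; by K\"{o}nig's lemma a finitely branching tree with no infinite branch is finite.

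Before the descent, I would verify that the class of pairs occurring in $\mathcal{H}$ is closed under passing to vertex groups, so that the construction makes sense at every level and the hypotheses persist. Each vertex group $L$ is relatively quasiconvex in $(G,\mathcal{P})$, hence finitely generated and itself relatively hyperbolic with an induced peripheral structure (see \cite[Prop.~3.4]{GuirardelLevitt15_AutRelHyp}), whose members remain virtually abelian. The hypothesis of virtually having no $2$--torsion is inherited: if $G_0 \le G$ is finite index with no element of order two, then $L \cap G_0$ is a finite-index subgroup of $L$ with no element of order two. Finiteness of each JSJ is automatic here because $G$---and hence every relatively quasiconvex $L$---is finitely presented (its virtually abelian peripherals are finitely presented, so Osin's criterion \cite{Osin06} applies), so the JSJ over elementary subgroups relative to the peripheral structure exists and is a finite graph of groups \cite[Thm.~2.20]{GuirardelLevitt_JSJ}.

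The substance of the argument is the strict descent of $c$. Following Delzant--Potyagailo and Louder--Touikan, I would use the trichotomy of JSJ vertex groups into \emph{rigid} vertices (admitting no further essential elementary splitting relative to their peripheral and edge structure), \emph{maximal parabolic} vertices (here virtually abelian of rank at least two), and \emph{quadratically hanging} vertices (virtually fundamental groups of compact $2$--orbifolds carrying the flexible part of the splitting). The complexity is engineered so that the rigid and peripheral vertices are terminal or strictly drop in $c$; these cases are routine. The only conceivable source of an infinite descending branch is a hanging vertex that keeps resplitting as the peripheral structure evolves through the four stages of the hierarchy, so the entire problem concentrates on the orbifold vertices.

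The hard part, and the exact place where the hypothesis on $2$--torsion enters, is showing that the hanging orbifold pieces cannot sustain an infinite branch. Reflector lines and corner reflectors of a $2$--orbifold correspond to elements of order two in its orbifold fundamental group, and it is precisely such mirror structure that permits a hanging piece to be essentially subdivided again and again. Under the standing assumption that $G$---and therefore every vertex group---virtually has no $2$--torsion, the hanging pieces are controlled (virtually honest surfaces rather than orbifolds-with-mirrors), and the complexity built from their Euler characteristics is genuinely strictly decreasing under proper descent. This is the content of Louder--Touikan's accessibility theorem; I expect this orbifold/$2$--torsion step, rather than the bookkeeping of the hierarchy, to be the real obstacle, and indeed without the $2$--torsion hypothesis there are known groups whose elementary hierarchies fail to terminate.
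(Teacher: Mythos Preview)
The paper does not give its own proof of this statement. It is quoted directly as \cite[Cor.~2.7]{LouderTouikan17} and used as a black box; there is no argument in the paper to compare your proposal against.

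Your outline is a reasonable high-level sketch of the Delzant--Potyagailo/Louder--Touikan strategy (complexity descent, finite branching plus K\"{o}nig, with the $2$--torsion hypothesis entering through orbifold vertex groups), and you correctly identify where the real content lies. But note that what you have written is a plan, not a proof: you never define the complexity $c$, and the assertion that $c$ strictly decreases on proper descent is exactly the theorem, not something you have argued. If you actually intend to supply a self-contained proof rather than cite \cite{LouderTouikan17}, you would need to carry out the construction of the complexity and the verification of strict descent in detail, which is the substance of that paper.
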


\subsection{Topological interpretation of the elementary JSJ hierarchy} \label{sec:goingdowntop}

Assume that $(G, \mathcal{P})$ is hyperbolic relative to virtually abelian subgroups of rank at least two.
We now describe in detail how to produce an elementary hierarchy recursively. Each vertex group of this hierarchy is relatively hyperbolic and equipped with a preferred peripheral structure.

The purpose of this more detailed construction is to trace the effect of elementary splittings on the topology of the Bowditch boundary, and to better understand the convergence actions on $S^2$ of each vertex group.

Consider the following sequence of operations constructing a hierarchy by recursion on the number of levels.  Level $0$ of the hierarchy contains the group pair $(G,\mathcal{P})$.  If the first $k$ levels have been constructed previously,  we now describe how to produce the level $k+1$ descendants of each level $k$ vertex.

\begin{enumerate} 
\item[(0)] \textbf{(Terminal)}
Suppose $(H,\mathcal{P})$ is a vertex group at level $k$. If $(H,\mathcal{P})$ is either finite, parabolic, or Fuchsian, it is terminal in the hierarchy.
If $H$ has no splittings over elementary subgroups relative to $\mathcal{P}$ and the collection $\mathcal{P}$ does not contain a two-ended subgroup, we also consider $(H,\mathcal{P})$ to be terminal.
In all other cases, $(H,\mathcal{P})$ is non-terminal.

\item \label{unpinchZ} 
 \textbf{(Unpinch)}
Suppose $(H,\mathcal{P})$ is a non-terminal vertex group at level $k$ which has no splitting over elementary subgroups relative to $\mathcal{P}$ such that at least one member of $\mathcal{P}$ is a two-ended group.

We remove all $2$--ended groups from $\mathcal{P}$ to produce a new peripheral structure $(H,\mathcal{Q})$. We consider $(H,\mathcal{Q})$ to be the only descendant of $(H,\mathcal{P})$, so that the associated splitting of $(H,\mathcal{P})$ is trivial.

The effect of this removal on the Bowditch boundary is to unpinch each parabolic point that is a local cut point of valence two, as explained for general relatively hyperbolic groups in Theorem~\ref{thm:CollapsingBoundaries}.

When performing Step~\ref{unpinchZ}, the boundary $\boundary(H,\mathcal{P})$ may be disconnected, and there may be $2$--ended parabolic subgroups. After Step~\ref{unpinchZ}, the boundary $\boundary(H,\mathcal{Q})$ may be disconnected, but all parabolic subgroups in $\mathcal{Q}$ are virtually $\Z^n$ for $n\ge 2$. 

If $H$ acts as a geometrically finite convergence group on $S^2$, then we blow up the action on $S^2$ as described in 
Theorem~\ref{thm:CoveringConvergence}. When only $2$--ended subgroups are removed, this blowup may also be obtained using \cite[Thm.~4.2]{HPWpreprint}.

\item \label{splitfinite}
\textbf{(Split over finite subgroups)}
Suppose $(H,\mathcal{Q})$ is a non-terminal vertex group at level $k$ such that $\mathcal{Q}$ contains no two-ended groups and $H$ splits over a finite subgroup relative to $\mathcal{Q}$.
Then split $(H,\mathcal{Q})$ using a JSJ decomposition over finite subgroups relative to $\mathcal{Q}$.  We define the level $k+1$ descendants of $(H,\mathcal{Q})$ to be the vertex groups of this splitting.
If $L$ is any such descendant, then $L$ is considered to be hyperbolic relative to the family $\mathcal{Q}_L$ containing all infinite subgroups of the form $L \cap Q$ for $Q \in \mathcal{Q}$.
The infinite vertex groups of this splitting are precisely the family of stabilizers of nontrivial components of $\boundary(H,\mathcal{Q})$. There may also be finite vertex groups.

In Step~\ref{splitfinite}, all parabolic subgroups of $\mathcal{Q}$ are virtually $\Z^n$ for $n\ge 2$ and $\boundary(H, \mathcal{Q})$ may be disconnected. After Step~\ref{splitfinite} the boundary of each descendant vertex group $(L,\mathcal{Q}_L)$ is either empty or connected.  Still all maximal parabolic subgroups of $\mathcal{Q}_L$ are virtually $\Z^n$ for $n\ge 2$.

\item \label{splitperif}
\textbf{(Split over parabolic subgroups)}
Suppose that $(H,\mathcal{Q})$ is a non-terminal vertex group at level $k$ that does not split over finite subgroups relative to $\mathcal{Q}$ but that does split over some parabolic subgroup relative to $\mathcal{Q}$.  Suppose also that $\mathcal{Q}$ contains no two-ended groups.

Split $(H,\mathcal{Q})$ using the canonical JSJ decomposition over parabolic subgroups of $\mathcal{Q}$ relative to $\mathcal{Q}$. The tree corresponding to this splitting is equal to the cut point tree of the boundary, as described in Section~\ref{subsection:parabolic}.
If $L$ is a vertex group of this splitting, let $\mathcal{Q}_L$ be its induced peripheral structure, as described in the previous step. 

Before Step~\ref{splitperif}, the boundary of $(H, \mathcal{Q})$ is connected and could contain cut points, and all peripheral subgroups are virtually $\Z^n$ for $n\ge 2$. 
After Step~\ref{splitperif}, there are two types of vertex groups.  The most interesting case is a vertex group whose boundary $(L,\mathcal{Q}_L)$ is nontrivial and connected with no cut points (but it could contain cut pairs) and its peripheral subgroups may be virtually $\Z$ or may have higher rank.
The other case is a parabolic vertex group whose boundary is a single point.

\item \label{splitZ}
\textbf{(Split over loxodromic subgroups)}
Suppose that $(H,\mathcal{Q})$ is a vertex group at level $k$ that does not split over finite or parabolic subgroups relative to $\mathcal{Q}$ but that does admit a splitting over loxodromic subgroups relative to $\mathcal{Q}$.

Split $(H,\mathcal{Q})$ using the canonical JSJ decomposition over loxodromic subgroups of $\mathcal{Q}$ relative to $\mathcal{Q}$. The tree corresponding to this splitting is equal to the cut pair tree of the boundary, as described in Section~\ref{subsection:loxodromics}.
If $L$ is a vertex group of this splitting, let $\mathcal{Q}_L$ be its induced peripheral structure, as described above. 

Before Step~\ref{splitZ}, the boundary is connected with no cut points and its peripheral subgroups are either virtually $\Z$ or of higher rank.
After Step~\ref{splitZ}, each descendant vertex group is either $2$--ended, hanging Fuchsian, or rigid over $2$--ended groups. The rigid vertex groups might have disconnected boundary or cut points. Their peripheral subgroups may be either virtually $\Z$ or of higher rank.
\end{enumerate} 

The recursion above produces a hierarchy of elementary splittings that refines the Louder--Touikan elementary JSJ hierarchy. Under the assumption that the initial group $G$ is virtually without $2$--torsion, such a hierarchy terminates after a finite number of steps.

We now consider what happens when we apply the recursion above to a group that acts on $S^2$.  Suppose $(G,\mathcal{P})$ acts as a geometrically finite convergence group on $S^2$ whose maximal parabolic subgroups are the members of $\mathcal{P}$.
Suppose also that the members of $\mathcal{P}$ are each virtually abelian of rank two.
Each group pair in the hierarchy inherits a convergence group action on $S^2$ as described above---unpinching each time Step~\ref{unpinchZ} is applied.

\begin{prop}[Terminal groups are Kleinian] 
\label{prop:GoingDown}
Assume the hypotheses of Theorem~\ref{thm:generalcase}.  In particular, $G$ acts faithfully on $S^2$ and virtually has no $2$--torsion.
Assume either that $\partial(G, \mathcal{P})$ does not contain a \Sierpinski\ carpet or that the relative Cannon conjecture is true. 

Then any hierarchy for $(G,\mathcal{P})$ produced as above is finite.
The induced action of each terminal vertex group $G_v$ on $S^2$ is topologically conjugate to a geometrically finite Kleinian action of $G_v$ on $\widehat{\C}$.
More precisely, if the boundary contains no \Sierpinski\ carpet, then each terminal Kleinian group is either finite, rank-two parabolic, or Fuchsian.
\end{prop}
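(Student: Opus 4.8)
The plan is to establish the three assertions in order: finiteness of the hierarchy, the Kleinian conclusion for each terminal vertex group, and the refined trichotomy in the absence of a carpet. Finiteness is essentially immediate. The recursion of Section~\ref{sec:goingdowntop} only refines the elementary JSJ hierarchy of Louder--Touikan: each of the four steps \ref{unpinchZ}--\ref{splitZ} inserts finitely many auxiliary levels between consecutive JSJ levels. Since $G$ virtually has no $2$--torsion, \cite[Cor.~2.7]{LouderTouikan17} gives that the elementary JSJ hierarchy is finite, and hence so is its refinement. From the terminal condition in Step~(0), each terminal pair $(G_v,\mathcal{Q}_v)$ is of exactly one of four kinds: finite, parabolic, Fuchsian, or \emph{rigid} (admitting no elementary splitting relative to $\mathcal{Q}_v$, with no two-ended member of $\mathcal{Q}_v$). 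Recall from the text preceding the proposition that each $G_v$ inherits a geometrically finite convergence action on $S^2$, so the realization results of Sections~\ref{sec:Planar} and~\ref{sec:Kleinian} apply.

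The first three kinds are handled directly by the geometrization results of Section~\ref{sec:Planar}. A finite vertex group is topologically conjugate into $O(3)$ by Kérékjártó (Theorem~\ref{thm:Kerekjarto}), and $O(3)$ acts conformally on $\widehat{\C}$, so it is (trivially geometrically finite) Kleinian. A parabolic vertex group acts properly on $S^2 \setminus \{p\} \cong \R^2$, where $p$ is its unique fixed point; by Theorem~\ref{thm:OrbifoldGeometrization} this action is conjugate to an isometric action of constant curvature, which must be Euclidean since the group is virtually abelian. Because the boundary is planar the rank is at most two, and it is exactly two by our standing assumption, so the group is a Euclidean crystallographic group extending to $\widehat{\C}$ as a geometrically finite Kleinian group with a rank-two parabolic fixed point at $\infty$. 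A Fuchsian vertex group is topologically conjugate to a Fuchsian action on $\Hyp^2 \cup S^1$ by Martin--Tukia (Theorem~\ref{thm:MartinTukia}); placing the invariant circle as a round circle in $\widehat{\C}$ realizes it as a geometrically finite Kleinian group.

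The substance of the argument is the rigid case. Here Steps~\ref{splitperif} and~\ref{splitZ} have been exhausted, so $\partial(G_v,\mathcal{Q}_v)$ is connected, has no cut points, and has no inseparable exact cut pairs; equivalently $G_v$ has no two-ended splitting relative to $\mathcal{Q}_v$. I would invoke the main classification of planar boundaries from \cite{HruskaWalsh}: such a rigid, connected, cut-point-free planar boundary is homeomorphic to $S^1$, to $S^2$, or to a Sierpinski carpet. If we assume $\partial(G,\mathcal{P})$ contains no carpet, then the sphere and carpet outcomes are excluded (the sphere itself contains carpets), so the boundary is $S^1$ and the circle boundary theorem \cite{Tukia_Fuchsian,Gabai92,CassonJungreis94} shows $G_v$ is virtually Fuchsian; this yields both the general Kleinian conclusion and the refined statement that the only terminal types are finite, rank-two parabolic, or Fuchsian. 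If instead the relative Cannon conjecture is assumed, the carpet case is Kleinian by the relative Cannon conjecture and the sphere case is Kleinian by Cannon, so in all cases $G_v$ is topologically conjugate to a geometrically finite Kleinian action.

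The main obstacle is precisely this rigid case, and within it the bookkeeping that the no-carpet hypothesis on $M = \partial(G,\mathcal{P})$ genuinely propagates to every terminal boundary. Because Step~\ref{unpinchZ} blows up the valence-two parabolic points, the boundary of $G_v$ is \emph{not} literally a subspace of $M$, so ``contains no embedded carpet'' cannot simply be restricted. I would argue that the unpinching quotients of Theorem~\ref{thm:CollapsingBoundaries}, which collapse the blown-up peripheral circles to points, carry any embedded carpet in a vertex boundary to an embedded carpet in $M$, so that no terminal boundary is a carpet (or sphere) unless $M$ already contains one. This carpet-propagation step, together with the care needed to match the parabolic (cusp) peripheral structure when realizing the Fuchsian and parabolic pieces as geometrically finite Kleinian groups on $\widehat{\C}$, is where the real work lies.
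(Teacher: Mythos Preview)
Your overall architecture matches the paper's proof, and the finite, parabolic, and circle cases are handled exactly as the paper does. Two genuine gaps remain in the rigid case.

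First, your assertion that ``the carpet case is Kleinian by the relative Cannon conjecture'' is not correct: the relative Cannon conjecture concerns groups whose Bowditch boundary is $S^2$, not a carpet. The paper supplies the missing step via a doubling argument. If $\boundary(H,\mathcal{Q})$ is a \Sierpinski\ carpet, the peripheral circles form a null family with relatively quasiconvex stabilizers; one forms the double $DH$ of $H$ over conjugacy representatives of these stabilizers, and by Dahmani's combination theorem together with \cite{TshishikuWalsh20} the boundary of $DH$ is $S^2$. Only then does the relative Cannon conjecture apply, giving a Kleinian realization of $DH$, and restriction to $H$ gives the desired conjugacy. (Relatedly, the $S^2$ case for a terminal vertex uses the \emph{relative} Cannon conjecture, not the absolute one, since the terminal pair carries rank-two peripheral structure.) Also, the trichotomy you cite is not in \cite{HruskaWalsh}; the paper instead invokes Haulmark's classification \cite{HaulmarkRelHyp} of one-dimensional boundaries of rigid relatively hyperbolic groups (empty set, point, circle, carpet, or Menger curve) and rules out the Menger curve by planarity.

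Second, your carpet-propagation sketch has the right target but not the mechanism. The pinching quotient $\boundary(H,\mathcal{Q}) \to \boundary(H,\mathcal{P})$ collapses each removed peripheral limit set to a point, so it can identify points of an embedded carpet $\mathcal{S}$ and destroy the embedding. The paper's device is Lemma~\ref{lem:Sierpinski}: any \Sierpinski\ carpet contains a subcarpet $\mathcal{S}'$ disjoint from its peripheral circles (more generally, from any prescribed countable set). Since the collapsed sets meet $\mathcal{S}$ in at most countably many points, one passes to such an $\mathcal{S}'$, on which the pinching map is injective, yielding an embedded carpet downstairs. This subcarpet lemma is the concrete ingredient your final paragraph is missing.
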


\begin{proof}
According to Louder--Touikan \cite{LouderTouikan17}, the elementary hierarchy is finite.  Each terminal vertex of the hierarchy carries a group pair $(H,\mathcal{Q})$ with a relatively hyperbolic structure with all peripheral subgroups virtually $\Z^2$.  The group pair $(H,\mathcal{Q})$ is either finite, parabolic, or nonelementary relatively hyperbolic with boundary that is connected with no cut points. 

If $\partial(H,\mathcal{Q})$ is a $2$--sphere, we are done by the relative Cannon conjecture, so we assume that $\Lambda H \ne S^2$. Thus, $\Lambda H$ has empty interior and, hence, has dimension at most one (\cite[Cor.~1.8.12]{Engelking_Dimensions}).
Each vertex group has at most one-dimensional connected boundary and admits no elementary splitting relative to peripheral subgroups.  By a theorem of Haulmark, its boundary is either the empty set, a single point, a circle, a \Sierpinski\ carpet, or a Menger curve \cite{HaulmarkRelHyp}.
But the Menger curve cannot arise, since it is not planar.

We claim that every terminal vertex group is topologically conjugate to a Kleinian group on $\widehat{\C}$. 
Indeed, if the boundary is empty the vertex group is finite, and the action on $S^2$ is conjugate to an orthogonal action by Theorem~\ref{thm:Kerekjarto}.
If the boundary is a single point $\xi$, the vertex group is virtually $\Z^2$ and the proper action on $S^2 \setminus \{\xi\}$ is conjugate to an isometric action on $\E^2$ by Theorem~\ref{thm:OrbifoldGeometrization}.
If the boundary is a circle, the convergence action on $S^2$ is conjugate to a Fuchsian action by Theorem~\ref{thm:MartinTukia}. 

Suppose for some terminal vertex, the boundary $\boundary(H,\mathcal{Q})$ is a carpet.
We first show that if $\boundary(G,\mathcal{P})$ does not contain a carpet, then $\boundary(H,\mathcal{Q})$ cannot be a carpet by induction on the height of the elementary hierarchy.
A connected component, a cut-point component, or a cut-pair component of a space with no carpets cannot contain a carpet.
Also, the boundary formed by unpinching a vertex group along $2$--ended subgroups cannot contain a \Sierpinski\ carpet. For if the unpinched boundary contains a carpet $\mathcal{S}$, then $\mathcal{S}$ contains a subcarpet $\mathcal{S}'$ not intersecting any peripheral circle of $\mathcal{S}$ (see Lemma~\ref{lem:Sierpinski} below), and $\mathcal{S}'$ embeds in the pinched boundary, a contradiction.

On the other hand, if the relative Cannon conjecture holds, we will show that the action of the carpet group $H$ on $S^2$ is topologically conjugate to a Kleinian action, using an argument similar to \cite[\S 5]{KapovichKleiner00}. 
If $\boundary(H,\mathcal{Q})$ is a \Sierpinski\ carpet, then its peripheral circles are pairwise disjoint and null. Thus, by \cite[Prop.~2.3]{HPWpreprint}, they lie in finitely many $H$--orbits, and the stabilizer of each circle $C$ is quasiconvex with limit set $C$. Choose a set of representatives $\{ K_1,K_2,....K_n \}$ for the conjugacy classes of circle stabilizers. Let $DH$ denote the double of $H$ over $\{K_i\}$; that is, $DH$ is the fundamental group of the graph of groups with two vertex groups, each isomorphic to $H$, and $n$ edges connecting them, joining the common copies of the $K_i$'s.
By Dahmani's combination theorem \cite{Dahmani03Combination}, the double $DH$ is hyperbolic relative to virtually $\mathbb{Z}^2$ subgroups and by the argument of \cite[Cor.~1.2]{TshishikuWalsh20}, its Bowditch boundary is $S^2$. (See \cite{KapovichKleiner00} for the word hyperbolic case). 
Thus, by the relative Cannon conjecture, $DH$ acts properly, isometrically, and geometrically finitely on $\mathbb{H}^3$.  
The Kleinian action must have the same maximal parabolic subgroups as the Bowditch boundary of $DH$, so the limit set of this Kleinian action is $\widehat{\C}$, and there is a $DH$--equivariant homeomorphism $S^2 \to \widehat{\C}$ conjugating the action of $DH$ on its boundary to the Kleinian action. Restricting to the limit set of $H$, we conclude that $H$ is topologically conjugate to a geometrically finite group of isometries of $\mathbb{H}^3$. 
\end{proof}

\begin{lem}
\label{lem:Sierpinski}
The \Sierpinski\ carpet $\mathcal{S}$ contains an embedded \Sierpinski\ carpet $\mathcal{S}'$ that is disjoint from every peripheral circle of $\mathcal{S}$.
\end{lem}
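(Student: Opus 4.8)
The plan is to reduce to the standard square model and then build the subcarpet by hand, using the topological characterization of the \Sierpinski\ carpet (a planar, locally connected, nowhere dense continuum with no local cut points whose complementary domains are bounded by disjoint simple closed curves of diameters tending to $0$) together with the classical fact that any homeomorphism between two carpets carries peripheral circles to peripheral circles, since a peripheral circle is characterized intrinsically as a \emph{nonseparating} simple closed curve of the carpet. Fixing a homeomorphism $h$ from the standard middle-thirds carpet $\mathcal{C}\subseteq[0,1]^2$ onto $\mathcal{S}$, it suffices to produce a subcarpet $\mathcal{C}'\subseteq\mathcal{C}$ disjoint from the union $\Pi$ of peripheral circles of $\mathcal{C}$ (the boundaries of the removed open squares, together with $\partial[0,1]^2$); then $\mathcal{S}'=h(\mathcal{C}')$ is the desired carpet.

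First I would record the supply of non-peripheral curves available inside $\mathcal{C}$. Writing $K\subseteq[0,1]$ for the ternary Cantor set, for any non-triadic $\alpha\in K$ the whole line $\{\alpha\}\times[0,1]$ lies in $\mathcal{C}$ and meets no closed removed square (its first coordinate avoids every closed middle third), hence is disjoint from $\Pi$; the same holds horizontally, so any axis-parallel rectangle whose four side-coordinates are non-triadic points of $K$ is a simple closed curve lying in $\mathcal{C}\setminus\Pi$. The crucial further observation is that the interior ``fault lines'' of $\mathcal{C}$ --- shared edges between two \emph{retained} cells, such as $\{1/3\}\times[0,1/3)$ --- also lie in $\mathcal{C}\setminus\Pi$, since such an edge borders no removed square even though its constant coordinate is triadic. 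These fault lines are exactly what make a connected subcarpet possible.

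Using these curves I would carry out a \Sierpinski-type recursion, producing a nested sequence $F_1\supseteq F_2\supseteq\cdots$ of compact carpet-with-finitely-many-holes regions with $F_n\subseteq S_n$, where $S_n$ is the $n$-th stage of the standard carpet, whose complementary holes $\Delta_j$ are good rectangles with boundaries in $\mathcal{C}\setminus\Pi$ and whose connecting bridges are routed along the non-peripheral fault lines, all arranged so that $\bigcup_j\Delta_j$ is dense and contains every removed square of $\mathcal{C}$. Setting $\mathcal{C}'=\bigcap_n F_n$ then gives $\mathcal{C}'\subseteq\bigcap_n S_n=\mathcal{C}$ and $\mathcal{C}'\cap\Pi=\emptyset$, and the proof concludes by verifying Whyburn's hypotheses for $\mathcal{C}'$.

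I expect the main obstacle to be precisely this final verification, together with the tension underlying it: because $\mathcal{S}$ must sit inside the nowhere-dense set $\mathcal{C}$, the complementary holes of $\mathcal{C}'$ are forced to absorb the \emph{dense} set $\Pi$, and a naive enlargement of the holes of $\mathcal{C}$ would sever the thin bridges keeping $\mathcal{C}$ connected, collapsing the would-be subcarpet to a totally disconnected product of Cantor sets. The heart of the argument is therefore to route the bridges of $\mathcal{C}'$ along the non-peripheral fault lines and to keep the enlargements small enough at each scale that the combinatorial adjacency pattern of complementary regions is unchanged; this is what ensures that $\mathcal{C}'$ is a connected, locally connected continuum with no local cut points, so that the topological characterization applies and $\mathcal{C}'$ is a genuine \Sierpinski\ carpet.
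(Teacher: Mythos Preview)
Your approach is genuinely different from the paper's and considerably more laborious. The paper gives two short arguments that sidestep any explicit construction inside the square model. The first is geometric: take a compact hyperbolic $3$--manifold $M$ with three totally geodesic boundary components, and glue four copies so that one copy $M_0$ lies in the interior of the result; the limit sets in the universal cover then exhibit a carpet inside a larger carpet, disjoint from the outer peripheral circles. The second is pure point-set topology: collapse each peripheral circle of $\mathcal{S}$ to a point to obtain a $2$--sphere in which the image of the peripheral circles is merely a countable set $D$, and invoke the general fact \cite[Thm.~1.8.9]{Engelking_Dimensions} that any subset of $\R^n$ with empty interior has a homeomorphic copy whose closure avoids a prescribed countable set. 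Either argument finishes the lemma in a few lines.

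Your direct recursive construction can in principle be made to work, and your identification of the non-peripheral ``fault lines'' is correct and is exactly the mechanism that lets connectivity survive the enlargement of holes. But as written the proposal is a plan rather than a proof: the coordinated enlargement of every removed square to a rectangle with non-peripheral boundary, done so that the adjacency pattern of complementary regions is preserved at every scale, and the subsequent verification of Whyburn's characterization, are announced but not executed, and the bookkeeping needed to carry this out carefully is substantial. (One small slip: the endpoint $(1/3,0)$ of your sample fault line lies on the outer square $\partial[0,1]^2\subset\Pi$, so you want the open segment.) Given the two one-paragraph proofs above, the extra effort buys nothing here.
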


\begin{proof}
Let $M^3$ be a compact hyperbolic $3$--manifold with three totally geodesic boundary components $A$,$B$,  and $G$.
Take four $3$--manifolds $M_0, M_1$, $M_2$, and $M_3$, each isometric to $M$, and identify $A_0=A_1$, \ $B_0=B_2$, and $C_0=C_3$ to form a hyperbolic manifold with boundary that has $M_0$ in its interior.
The limit sets of their universal covers provide the required carpets.

Alternatively, collapse each peripheral circle of $\mathcal{S}$ to a point to get a $2$--sphere. The image of the set of peripheral circles is a countable set $D \subset S^2$.
A subcarpet disjoint from $D$ exists by the following theorem: For every subset $S$ of $\R^n$ with empty interior and any countable set $D$, there is a subset $S'$ homeomorphic to $S$ whose closure lies in $\R^n \setminus D$ \cite[Thm.~1.8.9]{Engelking_Dimensions}.
\end{proof}

\subsection{Assembling the hierarchy}
\label{sec:PuttingTogether}

We now prove several of our main results.

\begin{proof}[Proof of Theorem~\ref{thm:generalcase}] 
If some members of $\mathcal{P}$ are not virtually abelian, we express $\mathcal{P}$ as $\mathcal{Q} \sqcup \mathcal{P}_h$, where $\mathcal{P}_h$ is the family of all peripheral subgroups that are not virtually abelian. Such subgroups are either cocompact Fuchsian groups or virtually free but not virtually cyclic. Theorem~\ref{thm:CoveringConvergence} produces an action of $G$ on $S^2$ covering the original action, in which the action of each member of $\mathcal{P}_h$ is blown up to a Fuchsian action on a closed disc.

We now assume that all members of $\mathcal{P}$ are virtually abelian.
Proposition~\ref{prop:GoingDown} gives a finite hierarchy of the group $G$, which we can write as a rooted tree of height $h$, where each operation in the description of the hierarchy in Section~\ref{sec:goingdowntop} moves to a lower level.  The vertex group at the root is $G$.  Each vertex one level below the root is either the result of unpinching (Step~\ref{unpinchZ}), in which case there is only one vertex at this level, or each vertex is a vertex group of a splitting of $G$ corresponding to one of Steps \ref{splitfinite}, \ref{splitperif} or~\ref{splitZ}.   Each terminal vertex is topologically conjugate to a geometrically finite Kleinian group by Proposition~\ref{prop:GoingDown}. 

Thus, we know the result is true for groups of height $0$.
We proceed by induction on the height $h$ of the hierarchy.
If $(G,\mathcal{P})$ has at least one $2$--ended peripheral subgroup it has a single descendant $(G,\mathcal{Q})$ obtained by removing all $2$--ended groups from $\mathcal{P}$ (Step~\ref{unpinchZ} of hierarchy). By induction, $(G,\mathcal{Q})$, which is at height $h-1$, is conjugate to a geometrically finite Kleinian group.
By the inductive hypothesis, the action of $(G,\mathcal{Q})$ is topologically conjugate to a geometrically finite Kleinian action.  According to Proposition~\ref{prop:unpinchpinch}, the original action of $(G, \mathcal{P})$ is also topologically conjugate to a Kleinian group.

Otherwise all peripheral subgroups of $(G,\mathcal{P})$ are virtually $\Z^2$.
If $\boundary(G,\mathcal{P})$ is disconnected (that is, if the operation going down was Step~\ref{splitfinite}), apply Proposition~\ref{prop:FiniteDecomposition}.
If it is connected and has a cut point (that is, the operation going down was Step~\ref{splitZ}), apply Proposition~\ref{prop:LoxodromicDecomposition}.
If it is connected with no cut point but contains a cut pair (that is, the operation going down was Step~\ref{splitperif}), then apply Proposition~\ref{prop:ParabolicDecomposition}.
In each case, we are given a graph of groups whose vertex groups are conjugate to geometrically finite Kleinian groups, and the corresponding proposition shows that $(G,\mathcal{P})$ is also topologically conjugate to a geometrically finite Kleinian group.
\end{proof} 

\begin{proof}[Proof of Theorem~\ref{thm:puttogether}]
If some vertex group has boundary $S^2$, then the splitting is trivial and there is nothing to prove. 
Without loss of generality, we assume the limit set of each vertex group is not $S^2$.
Each vertex group of the given graph of groups is virtually compact special by Wise's Virtually Compact Special Theorem (Theorem~\ref{thm:VCS}). Therefore, $G$ is residually finite by a result of Huang--Wise (see Theorem~\ref{thm:HW_Stature}).

We claim that $G$ contains a torsion-free finite index subgroup $G'$ such that the induced graph of groups splitting of $G'$ is regular in the sense of Definition~\ref{defn:Regular}.
Indeed, each vertex group---being Kleinian---is virtually torsion free, so by residual finiteness, there is a finite index subgroup of $G$ that has this property. 
For regularity, consider a $2$--ended vertex group.  The inclusion of each adjacent edge group is finite index, and there are finitely many such edges.  So, by passing to a further finite-index subgroup, we can make these all isomorphisms. Call this finite index subgroup $G'$.  

As above, $G'$ inherits a graph of groups structure where each edge group is $2$--ended.    Each vertex group $H_v'$ is the fundamental group of a hyperbolic manifold (some of these may be solid tori).  The edge groups correspond to embedded curves in the conformal boundary.  Gluing these pieces together as in Proposition~\ref{prop:ParedCombination} gives a pared $3$--manifold $M$ with nonempty boundary whose fundamental group is $G'$. The $3$--manifold $M$ is hyperbolic by the Hyperbolization Theorem (see Theorem~\ref{thm:Hyperbolization}), so $G$ is virtually Kleinian.
\end{proof} 

\begin{proof}[Proof of Theorem~\ref{thm:main}]
By hypothesis $(G,\mathcal{P})$ has no splittings over a finite or parabolic subgroup.
Consider the JSJ decomposition over $2$--ended subgroups. Each non-elementary vertex group $G_v$ of this splitting is relatively hyperbolic with respect to the collection $\mathcal{S}_v$, consisting of all infinite intersections of $G_v$ with members of $\mathcal{P}$ together with the subgroups that stabilize the edges adjacent to $v$.
According to \cite[Thm.~8.8]{HruskaWalsh}, the action of each rigid vertex group $G_v$, on its planar limit set $\boundary(G_v,\mathcal{S}_v)$ extends to a geometrically finite convergence group action on $S^2$. 
Similarly, the action of each quadratically hanging vertex group on the circle $\boundary(G_v,\mathcal{S}_v)$, also extends to such a convergence group action on $S^2$, since it is virtually Fuchsian. For each $2$--ended vertex group $G_v$, choose a Kleinian action that is virtually loxodromic.
Note that the action of each vertex group $G_v$ on $S^2$ could have a finite kernel $F_v$.
According to Theorem~\ref{thm:generalcase}, if $G_v$ is nonelementary, the action of the quotient group $G_v/F_v$ on $S^2$ is covered by a Kleinian action on $\widehat{\C}$ in which each adjacent $2$--ended edge group $G_e$ acts parabolically.
Since the abstract group $G_v$ maps onto a Kleinian group with finite kernel, $G_v$ has a finite index torsion-free subgroup $H_v$  by \cite[Thm.~1.3]{HaissinskyLecuire_3manifold}.

Thus, $H_v$ has a faithful Kleinian action in which each edge group acts parabolically. There is an associated peripheral structure $\mathcal{P}_v \sqcup \mathcal{Q}_v$ containing only virtually abelian groups such that $\mathcal{Q}_v$ is the family of stabilizers of edges adjacent to $v$. This peripheral structure may be different from $\mathcal{S}_v$, since the nonelementary hyperbolic surface groups have been removed in the covering in \ref{thm:generalcase}.   Note that each member of $\mathcal{Q}_v$ is two-ended. Let $M_v = \Hyp^3/H_v$ be the associated hyperbolic $3$--manifold with pared structure $(M_v, P_v \cup Q_v)$, where $Q_v$ is a disjoint collection of incompressible annuli corresponding to the $2$--ended groups of $\mathcal{Q}_v$. Removing $Q_v$ from the pared structure gives a new pared structure $(M_v,P_v)$ that is again hyperbolic by the Hyperbolization Theorem (see Theorem~\ref{thm:Hyperbolization}).

Each two-ended parabolic subgroup in $\mathcal{Q}_v$ corresponds to a rank-one cusp of $(M_v, P_v \cup Q_v)$, and in the new pared structure $(M_v, P_v)$, these are embedded annuli in the conformal boundary of $(M_, P_v)$. Applying Theorem~\ref{thm:puttogether}, the group $G$ is virtually the fundamental group of a hyperbolic 3-manifold. 
\end{proof} 

\section{Applications}
\label{sec:Applications}

We now give some applications to the conjecture of Martin and Skora, and to variations of the Cannon conjecture. 

\begin{conj}[Cannon Conjecture] \label{conj:Cannon}
Let $G$ be a word hyperbolic group whose Gromov boundary is homeomorphic to $S^2$. Then $G$ acts properly, isometrically, and cocompactly on $\Hyp^3$.
\end{conj}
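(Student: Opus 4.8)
This is the Cannon conjecture, one of the central open problems of the subject, so rather than a complete argument I will describe the route by which the methods of this paper engage with it and isolate where the difficulty concentrates. A word hyperbolic $G$ with $\boundary G \isom S^2$ is relatively hyperbolic with empty peripheral structure, and in the torsion-free case Theorem~\ref{thm:CannonEquivalent} identifies the conjecture with the relative Cannon conjecture. For groups with torsion one would first pass to a torsion-free finite-index subgroup and then promote the resulting structure back via Theorem~\ref{thm:virtuallyKleinian}; but that step already requires residual finiteness, which is itself open for general word hyperbolic groups. In any case the equivalence of Theorem~\ref{thm:CannonEquivalent} relates two equally intractable statements rather than reducing to something strictly easier, so the genuine content must come from a direct analysis of the boundary.

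The hierarchy-and-combination strategy that proves Theorem~\ref{thm:main} is vacuous in this setting. Since $S^2$ has no cut points and no cut pairs and the domain of discontinuity is empty, there is no complementary disc, no parabolic splitting, and no loxodromic splitting; the elementary JSJ hierarchy of Section~\ref{sec:ElementaryHierarchy} halts immediately at $G$ itself, and the combination theorems of Section~\ref{sec:Combination} have nothing to assemble. Indeed, in the terminal analysis of Proposition~\ref{prop:GoingDown} the sphere-boundary case is dispatched only by invoking the relative Cannon conjecture as a hypothesis, and for empty peripheral structure this is the Cannon conjecture itself. Thus the present machinery reduces all of Theorem~\ref{thm:main} to this one case but cannot resolve it internally.

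Accordingly, the hard part---and the whole content of the statement---is to build the hyperbolic structure directly on a group whose boundary is the entire sphere, with no splitting to exploit. The most promising external route is the theorem of Bonk and Kleiner, which shows that if $\boundary G$ admits an Ahlfors $2$--regular visual metric then $G$ already acts geometrically on $\Hyp^3$. The plan would therefore be to establish, via Cannon's combinatorial Riemann mapping theorem and the combinatorial moduli estimates of Cannon--Swenson, that the Ahlfors regular conformal dimension of $\boundary G$ equals $2$ and is attained. Carrying out this conformal-dimension step is exactly where every known approach stalls, and it is the reason the statement is recorded here as a conjecture rather than proved as a theorem.
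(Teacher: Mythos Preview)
You have correctly recognized that the statement is a conjecture, not a theorem, and that the paper does not prove it; indeed the paper only uses it as a hypothesis (in Proposition~\ref{prop:GoingDown} and Theorem~\ref{thm:generalcase}) and relates it to other open statements via Theorem~\ref{thm:equivalent}. Your analysis of why the paper's hierarchy machinery is vacuous on an $S^2$ boundary---no cut points, no cut pairs, empty ordinary set, so the elementary hierarchy halts immediately---is accurate and matches exactly how the sphere case is handled (or rather, not handled) in Section~\ref{sec:ElementaryHierarchy}.
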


\begin{conj}[Relative Cannon Conjecture]  \label{conj:relativecannon}
Let $(G,\mathcal{P})$ be a relatively hyperbolic group pair whose Bowditch boundary is homeomorphic to $S^2$. Then $G$ acts properly, isometrically, and geometrically finitely on $\Hyp^3$.
\end{conj}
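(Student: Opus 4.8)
The Relative Cannon Conjecture (Conjecture~\ref{conj:relativecannon}) is open, so I can only propose a strategy that reduces it to the classical Cannon Conjecture (Conjecture~\ref{conj:Cannon}); this reduction is the substance of the equivalence recorded in Theorem~\ref{thm:CannonEquivalent}, and like that theorem it is cleanest for torsion-free $G$, which I will assume throughout the plan. The first step is to pin down the peripheral structure. If $\partial(G,\mathcal{P}) \isom S^2$, then each bounded parabolic stabilizer $P \in \mathcal{P}$ acts properly and cocompactly on $S^2 \setminus \{\eta\} \isom \R^2$; hence by Bieberbach's theorem each $P$ is a two-dimensional crystallographic group, and being torsion-free it is isomorphic to $\Z^2$. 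Thus the peripheral subgroups are automatically virtually abelian, and by the virtually Kleinian principle (Theorem~\ref{thm:virtuallyKleinian}) it suffices to realize $(G,\mathcal{P})$ itself as a geometrically finite Kleinian group.

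The heart of the plan is Dehn filling. Choosing a primitive cyclic subgroup $N_i \le P_i \isom \Z^2$ in each peripheral and performing sufficiently long fillings, the results of Groves--Manning--Sisto \cite{GrovesManningSisto19} produce a quotient $\bar G$ that is hyperbolic relative to the $\Z$-images $P_i/N_i$; after removing these two-ended peripherals, $\bar G$ is word hyperbolic. The essential input is that, for long fillings, the Bowditch boundary of the filling is again $S^2$: collapsing each peripheral $\R^2$-action along a cyclic filling slope mirrors Thurston's hyperbolic Dehn surgery, in which filling the torus cusps of a geometrically finite group with limit set $S^2$ yields a closed hyperbolic manifold. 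Granting $\partial \bar G \isom S^2$, the classical Cannon Conjecture (Conjecture~\ref{conj:Cannon}) makes $\bar G$ a cocompact Kleinian group, so that $\Hyp^3/\bar G$ is a closed hyperbolic $3$--orbifold, which we may take to be a closed hyperbolic $3$--manifold after passing to a finite-index subgroup if necessary.

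It then remains to undo the filling geometrically. Topologically, $\Hyp^3/\bar G$ should be a Dehn filling of a compact $3$--manifold $N$ with torus boundary and $\pi_1 N \isom G$; I would extract $N$ from the filling data and verify that $(N,\partial N)$ is an atoroidal, acylindrical pared $3$--manifold, so that Thurston's Hyperbolization Theorem (Theorem~\ref{thm:Hyperbolization}) endows the interior of $N$ with a complete geometrically finite hyperbolic structure realizing $(G,\mathcal{P})$ as geometrically finite Kleinian. The main obstacle is twofold. First, and decisively, the classical Cannon Conjecture is itself open, so unconditionally this line of reasoning yields only the equivalence of Theorem~\ref{thm:CannonEquivalent}, not a proof of Conjecture~\ref{conj:relativecannon}. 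Second, even granting Conjecture~\ref{conj:Cannon}, the reverse-filling step is delicate: one must promote the purely group-theoretic filling $G \to \bar G$ to a genuine $3$--manifold Dehn filling and confirm that the unfilled manifold is hyperbolizable, which is precisely where the Groves--Manning--Sisto boundary machinery and the pared-manifold criteria (cf.\ Proposition~\ref{prop:ParedCombination}) must be invoked with care, and where the general (non-torsion-free) case additionally requires virtual torsion-freeness of $G$, which is not known a priori.
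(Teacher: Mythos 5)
First, a point of framing: the statement you were given is a \emph{conjecture}, and the paper offers no proof of it; what the paper proves (Theorem~\ref{thm:equivalent}, yielding Theorem~\ref{thm:CannonEquivalent}) is the equivalence, for torsion-free groups, of this conjecture with the classical Cannon conjecture and the geometrically finite Martin--Skora conjecture. You correctly recognize this and propose the reduction ``Cannon $\Rightarrow$ relative Cannon,'' which is indeed one of the implications in the paper's Theorem~\ref{thm:equivalent}. So the right comparison is between your reduction and the paper's proof of that implication --- and there your proposal has a genuine gap.

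The gap is your very first structural step: the claim that a torsion-free peripheral subgroup $P$ must be $\Z^2$ by Bieberbach's theorem. The bounded parabolic stabilizer $P$ acts properly and cocompactly on $S^2 \setminus \{\eta\} \isom \R^2$ only by \emph{homeomorphisms}, not by Euclidean isometries, so Bieberbach does not apply. By Theorem~\ref{thm:OrbifoldGeometrization} the invariant constant-curvature metric on the plane may be hyperbolic, in which case $P$ is a cocompact Fuchsian group, i.e.\ (torsion-free) a closed hyperbolic surface group. This is not a hypothetical loophole: the paper emphasizes that in the Relative Cannon Conjecture ``the members of $\mathcal{P}$ do not need to be virtually abelian,'' and in its proof of the implication it records (via \cite[Thm.~0.3]{Dahmaniparabolic}, \cite[Cor.~3.2]{HruskaWalsh}) only that peripherals of a pair with $S^2$ Bowditch boundary are closed orientable surface groups, possibly hyperbolic. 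Your Dehn-filling plan therefore covers only the free-abelian case --- which the paper disposes of by directly citing \cite[Cor.~1.4]{GrovesManningSisto19}, whose proof is essentially the filling argument you sketch (including the delicate facts that long fillings have $S^2$ boundary and that the Kleinian structure on $G$ is recovered as a limit of the filled representations, rather than by reconstructing a pared manifold $N$ and re-hyperbolizing, as you propose).

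What your proposal is missing entirely is the main case with hyperbolic surface peripherals, which is where the paper's new machinery enters: one applies Theorem~\ref{thm:unpinch} to remove the family $\mathcal{P}'_h$ of hyperbolic surface peripherals, obtaining a planar Bowditch boundary $M=\partial(G',\mathcal{P}'\setminus\mathcal{P}'_h)$, which is a \Sierpinski\ carpet by \cite[Lem.~12]{TshishikuWalsh20}; one then doubles $G'$ over $\mathcal{P}'_h$ (Dahmani's combination theorem), so that $DG'$ is relatively hyperbolic with $\Z^2$ peripherals and Bowditch boundary $S^2$; only now do Groves--Manning--Sisto and the Cannon conjecture apply, giving a finite-covolume Kleinian action of $DG'$; finally $G'$ is geometrically finite in $DG'$ (it is not a virtual fiber), and Theorem~\ref{thm:virtuallyKleinian} promotes this to $G$. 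Without the unpinch-and-double step, your reduction fails exactly on the class of pairs the relative conjecture was formulated to include.
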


In the Relative Cannon Conjecture, the members of $\mathcal{P}$ do not need to be virtually abelian.  Thus, the members of $\mathcal{P}$ may not all act parabolically in the corresponding Kleinian representation.

\begin{conj}[GF Martin--Skora Conjecture] 
\label{conj:GFMartinSkora}
If $G < \Homeo(S^2)$ is a geometrically finite convergence group, then $G$ is covered by a Kleinian group. 
\end{conj}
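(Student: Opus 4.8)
The plan is to deduce the conjecture from Theorem~\ref{thm:generalcase} by stripping away its two auxiliary hypotheses—the absence of an embedded Sierpinski carpet and the virtual absence of $2$--torsion—and to locate precisely where each hypothesis is genuinely needed. First I would install the relatively hyperbolic structure. Given a geometrically finite convergence group $G < \Homeo(S^2)$, set $M = \Lambda G$. By the definition recalled in Section~\ref{sec:Convergence}, the pair $(G,\mathcal{P})$ is relatively hyperbolic, where $\mathcal{P}$ is the family of maximal parabolic subgroups, and $M = \partial(G,\mathcal{P})$ is its Bowditch boundary. Since $M \subseteq S^2$, the boundary is planar and the given action extends it. Thus we are in the setting of Theorem~\ref{thm:generalcase} except that $\mathcal{P}$ may contain non-abelian members, $M$ may contain a carpet, and $G$ may carry essential $2$--torsion.

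Next I would dispose of the non-abelian peripheral subgroups by unpinching. Writing $\mathcal{P} = \mathcal{Q} \sqcup \mathcal{P}_h$ with $\mathcal{P}_h$ the non-virtually-abelian members, Theorem~\ref{thm:unpinch} (in the sharper form of Theorem~\ref{thm:CoveringConvergence}) produces a convergence action of $G$ on $S^2$ with boundary $\partial(G,\mathcal{Q})$ that covers the original action. Crucially, this construction is $G$--equivariant and invokes no torsion hypothesis whatsoever. Since coverings compose, it therefore suffices to realize the unpinched pair $(G,\mathcal{Q})$—now with all peripheral subgroups virtually abelian—as covered by a Kleinian action; indeed, by the second half of Theorem~\ref{thm:generalcase} one expects topological conjugacy.

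The hard part is this final realization, and it is exactly where both auxiliary hypotheses reappear as real obstructions. I would run the elementary JSJ hierarchy of Section~\ref{sec:goingdowntop}, verify that each terminal vertex group is Kleinian via Proposition~\ref{prop:GoingDown}, and reassemble using the combination theorems of Section~\ref{sec:Combination}. Two things can fail. First, a terminal vertex group with Sierpinski carpet boundary is Kleinian only under the relative Cannon conjecture—and when $\Lambda G = S^2$ the desired conclusion is \emph{literally} that conjecture—so the carpet case cannot be closed unconditionally. Second, the induction needs the hierarchy to terminate, which rests on Louder--Touikan's finiteness theorem whose standing hypothesis is the virtual absence of order-two elements.

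To remove the torsion restriction, the natural attempt is to pass to a torsion-free finite-index normal subgroup $G_0 \trianglelefteq G$, establish the conclusion there (where the hierarchy is finite and $\Lambda G_0 = \Lambda G$ inherits the no-carpet property), and then promote it to $G$ by the Virtually Kleinian Theorem (Theorem~\ref{thm:virtuallyKleinian}), using that the unpinching of the previous paragraph is insensitive to torsion. The difficulty is that this presupposes $G$ to be virtually torsion free; for a general geometrically finite convergence group this is not known independently, since the residual finiteness one would use to extract $G_0$ (via Theorem~\ref{thm:HW_Stature}) is itself obtained only \emph{after} the relevant vertex groups are shown to be virtually compact special, i.e.\ after the hierarchy has been run. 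I therefore expect the conjecture to stand or fall on exactly these two points—the relative Cannon conjecture, and the finiteness of elementary hierarchies in the presence of $2$--torsion—which is why the statement remains a conjecture here rather than a theorem.
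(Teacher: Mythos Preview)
The statement you were asked to prove is Conjecture~\ref{conj:GFMartinSkora}, and the paper does not prove it: it is presented as an open conjecture. You have correctly recognized this by the end of your proposal, and your analysis of the obstructions is accurate. The paper's actual contribution concerning this conjecture is Theorem~\ref{thm:equivalent}, which shows that for torsion-free groups it is equivalent to the Cannon and relative Cannon conjectures; the implication from the relative Cannon conjecture to this one is precisely Theorem~\ref{thm:generalcase}, as you identified.

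Your diagnosis of the two obstacles is on target, though their weights differ. The carpet obstruction is fundamental: when $\Lambda G = S^2$ the conjecture \emph{is} the relative Cannon conjecture, so no amount of hierarchy machinery will remove it. The $2$--torsion obstruction is of a different character---it blocks the current proof strategy via Louder--Touikan, but is not known to be an intrinsic barrier. Your observation about the circularity in extracting a torsion-free finite-index subgroup is apt: residual finiteness in this paper comes from Theorem~\ref{thm:HW_Stature} applied after vertex groups are shown to be virtually compact special, which in turn presupposes the hierarchy has been run. So there is no proof to compare against; your proposal is a correct account of why the paper leaves this as a conjecture.
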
 

For an example of a convergence group on $S^2$ that properly covers another convergence group, see Definition~\ref{def:covering}.  

\begin{thm}
\label{thm:equivalent} 
The three conjectures listed above are equivalent within the class of all torsion-free groups. That is, if any of the conjectures is true for all torsion-free groups $G$, then the others are as well.
\end{thm}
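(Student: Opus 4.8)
The plan is to prove the cyclic chain of implications, working throughout in the torsion-free class: the Relative Cannon Conjecture (Conjecture~\ref{conj:relativecannon}) implies the GF Martin--Skora Conjecture (Conjecture~\ref{conj:GFMartinSkora}), which implies the Cannon Conjecture (Conjecture~\ref{conj:Cannon}), which in turn implies the Relative Cannon Conjecture. Since each arrow transfers the truth of one conjecture for all torsion-free groups to the next, the three are equivalent. The first arrow is immediate from the machinery already developed. If $G<\Homeo(S^2)$ is a torsion-free geometrically finite convergence group with maximal parabolic family $\mathcal{P}$, then $(G,\mathcal{P})$ is relatively hyperbolic with planar Bowditch boundary $\Lambda G\subseteq S^2$, and the given action on $S^2$ is a faithful convergence action extending the action on $\Lambda G$. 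Assuming the Relative Cannon Conjecture, Theorem~\ref{thm:generalcase} applies verbatim and shows that the action of $G$ on $S^2$ is covered by a Kleinian action on $\widehat{\C}$, which is precisely the conclusion of the GF Martin--Skora Conjecture.

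For the passage from Martin--Skora to Cannon, suppose $G$ is torsion free and word hyperbolic with $\boundary G\cong S^2$. Then $G$ acts on $\boundary G=S^2$ as a uniform convergence group, hence as a geometrically finite convergence group with no parabolic points, so the GF Martin--Skora Conjecture supplies a Kleinian group $\Gamma$, an isomorphism $\phi\colon\Gamma\to G$, and a monotone equivariant surjection as in Definition~\ref{def:covering}. The remaining task is to upgrade this to a genuine cocompact action. By the theorem of Bestvina and Mess, a word hyperbolic group with boundary $S^2$ is a three-dimensional Poincar\'{e} duality group, so $\operatorname{cd}(G)=3$ and hence $\operatorname{cd}(\Gamma)=3$. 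Writing $N=\Hyp^3/\Gamma$ for the associated geometrically finite hyperbolic $3$--manifold, $N$ is homotopy equivalent to its compact core; were $N$ noncompact, that core would have nonempty boundary and a spine of dimension at most two, forcing $\operatorname{cd}(\Gamma)\le 2$. This contradiction shows $N$ is closed, so $\Gamma$ is cocompact, and transporting its isometric action along $\phi^{-1}$ realizes $G$ as a cocompact lattice in $\Isom(\Hyp^3)$, as the Cannon Conjecture requires.

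The heart of the equivalence is the implication from the Cannon Conjecture to the Relative Cannon Conjecture, for which I would use the Dehn-filling results of Groves--Manning--Sisto \cite{GrovesManningSisto19}. Let $(G,\mathcal{P})$ be a torsion-free relatively hyperbolic pair with $\boundary(G,\mathcal{P})\cong S^2$. Each $P\in\mathcal{P}$ acts properly and cocompactly on the complement of its parabolic point, so Theorem~\ref{thm:OrbifoldGeometrization} forces $P$ to be either $\Z^2$ or a closed hyperbolic surface group. In the core case, where every peripheral is $\Z^2$, the limit set of each $P$ is a single point of the Bowditch boundary, and one fills each $\Z^2$ cusp along a slope so that the filled group remains torsion free. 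Because collapsing a single point leaves $S^2$ unchanged, sufficiently long fillings are word hyperbolic with Gromov boundary still homeomorphic to $S^2$, and the Cannon Conjecture realizes each as a closed hyperbolic $3$--manifold group. The converse half of the Dehn-filling analysis then promotes this infinite family of hyperbolic fillings back to a complete finite-volume hyperbolic structure on $(G,\mathcal{P})$ itself, which is the desired geometrically finite action on $\Hyp^3$.

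I expect the main obstacle to lie in this last implication, on two fronts. First, the reverse passage---recovering a geometrically finite structure on the unfilled pair from the hyperbolicity of all of its long fillings---is the delicate geometric input of \cite{GrovesManningSisto19} and must be quoted with care. Second, the non-abelian (surface-group) peripherals cannot be filled to a torsion-free hyperbolic quotient while keeping the boundary a sphere, and in the eventual Kleinian representation their single-point Bowditch limit sets must reappear as genuine circles; reconciling this, presumably by combining the filling argument with the unconditional unpinching of Theorem~\ref{thm:CoveringConvergence}, is the subtlest part of the argument and is exactly the point at which the Bowditch boundary $S^2$ and the Kleinian limit set $\widehat{\C}$ diverge. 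Once all three arrows are established, chasing the cycle shows the three conjectures equivalent for torsion-free groups, and the equivalence of the Cannon and Relative Cannon Conjectures---Theorem~\ref{thm:CannonEquivalent}---is extracted immediately by relating the first and third arrows.
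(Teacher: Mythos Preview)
Your cyclic chain matches the paper's, and the first two arrows are essentially identical to the paper's treatment: Relative Cannon $\Rightarrow$ GF Martin--Skora is exactly Theorem~\ref{thm:generalcase}, and GF Martin--Skora $\Rightarrow$ Cannon is the same Bestvina--Mess cohomological dimension argument the paper uses.

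The gap is in Cannon $\Rightarrow$ Relative Cannon, and you have correctly located it: the hyperbolic surface--group peripherals. You identify this as ``the subtlest part'' and gesture toward unpinching, but you do not supply an argument, and unpinching by itself is not enough---after removing the surface peripherals the boundary is a \Sierpinski\ carpet, not a sphere, so you cannot invoke the $\Z^2$ case directly. The paper's missing idea is a \emph{doubling} trick: pass to an orientation-preserving finite-index subgroup $G'$ (so that the peripherals become fundamental groups of closed orientable surfaces), unpinch the hyperbolic surface peripherals $\mathcal{P}'_h$ to obtain a carpet boundary via Theorem~\ref{thm:unpinch} and \cite{TshishikuWalsh20}, and then form the double $DG'$ across $\mathcal{P}'_h$. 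By Dahmani's combination theorem, $DG'$ is relatively hyperbolic with all peripherals $\Z^2$ and boundary $S^2$, so the already-established $\Z^2$ case (via Groves--Manning--Sisto) makes $DG'$ Kleinian. Then $G'$ sits inside $DG'$ as a geometrically finite subgroup (it is not a virtual fiber), and Theorem~\ref{thm:virtuallyKleinian} promotes this to $G$.

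A minor correction: your claim that Theorem~\ref{thm:OrbifoldGeometrization} forces each torsion-free peripheral to be $\Z^2$ or a closed hyperbolic surface group is not quite right without first passing to an orientation-preserving subgroup; Klein bottle groups and non-orientable surface groups are torsion free too. The paper handles this by passing to $G'$ first.
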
 

\begin{proof}
\textbf{Claim:}
Conjecture~\ref{conj:GFMartinSkora} implies Conjecture~\ref{conj:Cannon} for torsion-free groups.

Assume the GF Martin--Skora Conjecture for torsion-free groups.
If $G$ is word hyperbolic with Gromov boundary homeomorphic to $S^2$, and $G$ is torsion free, then $G$ acts faithfully on $S^2$.  The GF Martin--Skora Conjecture implies that $G$ is covered by a Kleinian group $\Gamma$ abstractly isomorphic to $G$. According to \cite[Cor.~1.4]{BestvinaMess91}, the torsion-free group $\Gamma$ has cohomological dimension $3$, so the manifold $\Hyp^3 / \Gamma$ is a closed hyperbolic $3$--manifold. In particular, $\Gamma\cong G$ acts properly, isometrically, and cocompactly on $\Hyp^3$.

\textbf{Claim:} Conjecture~\ref{conj:Cannon} implies Conjecture~\ref{conj:relativecannon} for torsion-free groups.

It follows from the proof in Groves-Manning--Sisto \cite[Cor.~1.4]{GrovesManningSisto19} that the Cannon conjecture for torsion-free groups implies the relative Cannon conjecture for torsion-free relatively hyperbolic pairs $(G,\mathcal{P})$ if all peripheral subgroups are $\Z^2$.
Indeed, the proof involves a sequence of Dehn fillings of $G$, each with torsion-free peripheral subgroups.  By \cite[Thm.~4.1]{GrovesManning_Elementary}, since the given group $G$ is torsion free, so are all the filled groups arising in their proof.  
The Cannon conjecture is applied to the torsion-free groups in this sequence. Hence, each is a $3$--manifold group. We conclude that the limit of the sequence of fillings is a Kleinian representation of $G$, realizing $G$ as the fundamental group of a cusped hyperbolic $3$--manifold.  

Now suppose $(G,\mathcal{P})$ is relatively hyperbolic with Bowditch boundary $\partial (G, \mathcal{P}) = S^2$, and that $G$ is torsion free. Then $G$ contains a finite index subgroup $G'$ that is orientation preserving on $S^2$. Let $\mathcal{P}'$ denote the induced set of peripheral subgroups of $G'$. Then $\partial(G', \mathcal{P}')$ is also $S^2$, and each member of $\mathcal{P}'$ is the fundamental group of a closed orientable surface (see \cite[Thm.~0.3]{Dahmaniparabolic} or \cite[Cor.~3.2]{HruskaWalsh}). If $\mathcal{P}'$ contains no hyperbolic surface groups, then each member of $\mathcal{P}'$ is free abelian. If the Cannon conjecture for torsion-free groups holds, then by the argument above using \cite[Cor.~1.4]{GrovesManningSisto19}, $G$ is the fundamental group of a cusped hyperbolic 3-manifold and hence satisfies the relative Cannon conjecture, Conjecture~\ref{conj:relativecannon}.

Suppose that some of the peripheral groups are hyperbolic. Let $\mathcal{P}'_h$ be the family of word hyperbolic peripheral subgroups in $\mathcal{P}'$, which are all the fundamental groups of closed hyperbolic orientable surfaces. 
According to Theorem~\ref{thm:unpinch}, the Bowditch boundary $M=\partial(G',\mathcal{P'} \setminus \mathcal{P}'_h)$ embeds in $S^2$.  According to \cite[Lem.~12]{TshishikuWalsh20}, this planar set $M$ is homeomorphic to the \Sierpinski\ carpet if $\mathcal{P}'_h$ is nonempty. The double $DG'$ of $G'$ across the subgroups $\mathcal{P}'_h$ is relatively hyperbolic with all peripheral subgroups isomorphic to $\Z^2$ by Dahmani's combination theorem \cite[Thm.~0.1]{Dahmani03Combination}. 
It follows that the boundary of the double $DG'$ is homeomorphic to $S^2$, as explained in the proof of \cite[Cor.~1.2]{TshishikuWalsh20} (see also \cite{KapovichKleiner00} for the word hyperbolic case).

Since the parabolic subgroups of $DG'$ acting as a convergence group on $S^2$ are free abelian, we again cite \cite[Cor.~1.4]{GrovesManningSisto19} as above, which implies (under our assumption of the Cannon conjecture for torsion-free groups) that $DG'$ acts properly and isometrically on $\Hyp^3$ with finite volume quotient.
The subgroup $G'$ must be geometrically finite in $DG'$ since it is not a virtual fiber. It follows from Theorem~\ref{thm:virtuallyKleinian} that $G$ itself has such an action. Thus, $G$ acts properly, isometrically, and geometrically finitely on $\mathbb{H}^3$. 

\textbf{Claim:} Conjecture~\ref{conj:relativecannon} implies Conjecture~\ref{conj:GFMartinSkora} for torsion-free groups.

This implication is given by Theorem~\ref{thm:generalcase}.
\end{proof}

\begin{proof}[Proof of Theorem~\ref{thm:CannonEquivalent}]
This result is one of the parts of Theorem~\ref{thm:equivalent}. 
\end{proof}

\bibliographystyle{alpha}
\bibliography{citations.bib}
\end{document}